\DeclareMathOperator\Gal{Gal}
\DeclareMathOperator\Aut{Aut}
\DeclareMathOperator\Out{Out}
\DeclareMathOperator\core{core}
\DeclareMathOperator\PGammaL{P\Gamma L}
\DeclareMathOperator\Sym{Sym}
\DeclareMathOperator\Image{Im}
\DeclareMathOperator\HG{H}
\DeclareMathOperator\Ind{Ind}
\DeclareMathOperator\Bad{Bad}
\newcommand{\Z}{{\mathbb Z}} 
\newcommand{\Q}{{\mathbb Q}}
\newcommand{\C}{{\mathbb C}}
\newcommand{\firstCondition}[4]{\IfEq{#4}{C}{There}{there} exists a \ur{} decomposition ${#1}:= {#2} \circ {#3}$ of ${#1}$ where ${#2},{#3} \in K[X]$, and ${#3}$ is an indecomposable polynomial which is neither cyclic nor dihedral}
\DeclareMathOperator\soc{soc}
\DeclareMathOperator\mon{Mon}
\DeclareMathOperator\Mon{Mon}
\DeclareMathOperator\Br{Br}
\DeclareMathOperator\Ram{Ram}
\begin{document}
\providecommand{\keywords}[1]{\textbf{\textit{Keywords: }} #1}
\newtheorem{theorem}{Theorem}[section]
\newtheorem{lemma}[theorem]{Lemma}
\newtheorem{prop}[theorem]{Proposition}
\newtheorem{cor}[theorem]{Corollary}
\newtheorem{problem}[theorem]{Problem}
\newtheorem{question}[theorem]{Question}
\newtheorem{conjecture}[theorem]{Conjecture}
\newtheorem{claim}[theorem]{Claim}
\newtheorem{condition}[theorem]{Condition}

\theoremstyle{definition}
\newtheorem{defn}[theorem]{Definition} 
\theoremstyle{remark}
\newtheorem{remark}[theorem]{Remark}
\newtheorem{example}[theorem]{Example}
\newtheorem{condenum}{Condition}

\newcommand{\cc}{{\mathbb{C}}}
\newcommand{\mQ}{{\mathbb{Q}}}
\newcommand{\nn}{{\mathbb{N}}}
\newcommand{\qq}{{\mathbb{Q}}}
\newcommand{\rr}{{\mathbb{R}}}
\newcommand{\mP}{{\mathbb{P}}}
\newcommand{\zz}{{\mathbb{Z}}}
\newcommand{\fp}{{\mathfrak{p}}}
\newcommand{\ra}{{\rightarrow}}
\newcommand{\eps}{{\varepsilon}}
\newcommand{\divides}{\,|\,}

\newcommand\DN[1]{{\color{black}{#1}}}
\newcommand\JK[1]{{\color{black} {#1}}}
\newcommand\oline[1] {{\overline{#1}}}

\def\technion{Department of Mathematics, Technion - Israel Institute of Technology, Haifa, Israel}
\def\weizmann{Department of Mathematics, Weizmann Institute of Science, Rehovot, Israel}

\title[Monodromy groups of polynomial compositions]{Polynomial compositions with large monodromy groups and applications to arithmetic dynamics}

\author{Joachim K\"onig}
\address{Department of Mathematics Education, Korea National University of Education, Cheongju, South Korea}
\email{jkoenig@knue.ac.kr}

\author{Danny Neftin}
\address{\technion}
\email{dneftin@technion.ac.il}%

\author{Shai Rosenberg}
\address{\technion}
\email{shai.ros@alumni.technion.ac.il}
\begin{abstract}
For a composition $f=f_1\circ\cdots \circ f_r$ of polynomials $f_i\in \mQ[x]$ of degrees $d_i\geq 5$ with alternating or symmetric monodromy group, we show that the monodromy group of $f$ contains the iterated wreath product $A_{d_r}\wr \cdots\wr A_{d_1}$. A similar property holds more generally for polynomials that do not factor through $x^d$ or Chebyshev. We derive consequences to arithmetic dynamics regarding arboreal representations, and forward and backward orbits of such $f$. In particular, given an orbit $(a_n)_{n=0}^\infty$ of $f$ as above, we show
that for ``almost all" $a\in \Z$, the set  of primes $p$ for which some $a_n$ is congruent to $a$ mod $p$ 
is ``small".
\end{abstract}
\maketitle

\section{Introduction}\label{sec: intro - methods}

The study of monodromy groups $\Mon(f):=\Gal(f(x)-t,\mQ(t))$  of polynomial maps $\mP^1_\mQ\ra \mP^1_\mQ$, $x\mapsto f(x)$ for $f\in \mQ[x]$, 
lies at the heart of many topics in number theory, dynamics, and other subjects. These  include  Hilbert sets \cite{Mul3,Fried,NZ,KN}, functional decompositions \cite{AZ,MZ,Pak2} and reducibility of iterates \cite{Ferr, Pak}. Moreover, 
monodromy groups of iterates $f^{\circ n}$ are of major significance in arithmetic dynamics, cf.\ \cite{Jones-mon,Nak}, in view of their following  close relation to {arboreal Galois representations (of the absolute Galois group of $\mQ$)}, an object of central role in the subject \cite{BIJMST,Jones-arbor, BJ}.  

The images 
$\Image\rho^{(n)}_{f,a}:=\Gal(f^{\circ n}(x)-a,\mQ)$ of  arboreal representations embed into $\Mon(f^{\circ n})=\Gal(f^{\circ n}(x)-t,\mQ(t))$ for most specializations  $t\mapsto a\in \mQ$.
Since 
much information on forward and backward orbits of $f$ is encoded in the action of $\Image\rho_{f,a}^{(n)}$ 
on the tree  $T^{(n)}_{f,a}=\bigcup_{i=0}^n(f^{\circ n})^{-1}(a)$ of preimages of $a$, 
\JK{key questions about these orbits depend decisively} on the 
the action of $\Mon(f^{\circ n})$ on $T^{(n)}=T^{(n)}_{f,t}$. \JK{We give two examples in the following.}

\JK{In the context of forward orbits, the actions of 
 the groups $\Image\rho_{f,a}^{(n)}$, $n\in \mathbb{N}$ determine the (natural) density of primes $p$ such that, for prescribed $a_0$ and $a\in \mathbb{Q}$, some value $f^{\circ n}(a_0)$, $n\in \mathbb{N}$ is congruent\footnote{Congruence conditions require that $p$ is coprime to the denominators of both sides.} to $a$ mod $p$, i.e., such that $a$ meets the orbit of $a_0$ under $f$ modulo $p$.}
For example, when $f(x)$ is $x^2-x+1$ (resp.\ $x^2$), $a=0$ (resp.\ $a=-1$), and $a_0=2$, this is the density of primes dividing  some element in the (``Euclid type") Sylvester sequence $a_{n+1}=1+a_0\cdots a_n$ (resp.\ some Fermat number $f^{\circ n}(a_0)+1=2^{2^n}+1$). 
When $\Image \rho_{f,a}^{(n)}$ is the full group $\Aut(T^{(n)})$ for all $n$, or under mild conditions merely if its index in $\Aut(T^{(n)})$ is bounded independently of $n$,  
the above density is $0$ \cite[Theorems 4.1, 4.2]{Jones-arbor}. In particular, this density is $0$ for the above Euclid type sequence \cite{Odoni}. However, it is unknown for which $f\in\mQ[x]$ such largeness holds even for infinitely many $a\in \mQ$, cf.\  \cite[\S 5 and Conj.\ 5.5]{BIJMST}. 
Recall that  $\Aut(T^{(n)})$  is well known to be the $n$-fold iterated wreath product $[S_d]^n=S_d\wr \cdots \wr S_d$, where $S_d\wr S_d=S_d^d\rtimes S_d$ is the (standard imprimitive) wreath product and $d:=\deg f$.  


\JK{In the context of backward orbits, the actions of the groups $\Image\rho_{f,a}^{(n)}$ determine the density of primes $p$ such that all fibers $(f^{\circ n})^{-1}(a)$, $n\in \mathbb{N}$  
are irreducible mod $p$ (resp.\ have $\leq C$ irreducible components mod $p$ for a constant $C\in \mathbb N$).} 
These conditions are equivalent to $f^{\circ n}(x)-a$ mod $p$ being irreducible (resp.\ having $\leq C$ irreducible factors), and we then say $f$ mod $p$ is stable (resp.\ $C$-stable) over $a$. 
When all $\Image\rho_{f,a}^{(n)}$, $n\in \mathbb{N}$ are large in a suitable sense \cite{Ferr}, the density of primes $p$ for which $f$ mod $p$ is stable  over $a$ is  $0$. However in general, it is unknown whether  such largeness holds even for infinitely many  $a\in \mQ$.  
See   \cite[\S 9]{BIJMST} or \cite{BJ2, MOS} for further details on the topic. 

\JK{Much of the difficulty in 
solving problems such as the above \DN{for most (or even infinitely many)} $a\in \mQ$ lies in determining for which $f$ the groups  $\Mon(f^{\circ n})$, $n\in\mathbb N$  are sufficiently ``large".} 
The groups  $\Mon(f)$, $f\in \mQ[x]$ were classified for indecomposable $f$  \cite{Feit, Mul}, that is, when $f$ is not a composition of two maps of smaller degree. However, excluding families such as certain self-similar maps \cite{BN,Nak} and the normalized Belyi maps \cite{BEK,ABEGKM}, little is known about the possibilities for $\Mon(f)$, or about when  it is ``large",  for nontrivial composition $f=f_1\circ\cdots\circ f_r$ of polynomials $f_i\in \mQ[x]$.

In this paper, we show that $\Mon(f)$ is ``large" for  compositions $f=f_1\circ\cdots\circ f_r$ of indecomposable polynomials which are not {\it linearly related (over $\C$)} to $x^d$ or a Chebyshev polynomial $T_d$, $d\in\mathbb N$.\footnote{Here, two polynomials $f,g\in K[X]$ are called linearly related over $K$ if there exist linear polynomials $\lambda,\mu\in K[X]$ such that $f=\lambda\circ g\circ \mu$.}
When $\Mon(f_i)$ is alternating or symmetric, this largeness amounts to the following assertion: 
\begin{theorem}\label{thm:main-intro}
Suppose $f=f_1\circ \cdots\circ f_r$ for $f_i\in \mQ[x]$ of degree $d_i\geq 5$ with  $\Mon(f_i)\in\{A_{d_i},S_{d_i}\}$, $i=1,\ldots,r$. 
Then $\Mon(f)$ contains \DN{a subgroup isomorphic to}
$A_{d_r}\wr \cdots \wr A_{d_1}$. 
\end{theorem}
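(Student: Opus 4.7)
The plan is to proceed by induction on $r$. The case $r=1$ is the hypothesis. For $r\ge 2$, write $f=g\circ f_r$ with $g:=f_1\circ\cdots\circ f_{r-1}$ of degree $N:=d_1\cdots d_{r-1}$. The natural imprimitive block structure on the $d_rN$ roots of $f-t$ (the $N$ fibers $f_r^{-1}(x_i)$ indexed by the roots $x_i$ of $g-t$) embeds $\Mon(f)\hookrightarrow \Mon(f_r)\wr \Mon(g)$ with canonical surjection $\pi\colon\Mon(f)\twoheadrightarrow\Mon(g)$. By induction, $\Image\pi\supseteq A_{d_{r-1}}\wr\cdots\wr A_{d_1}$, so it suffices to show the kernel $K:=\ker\pi$ contains $A_{d_r}^N$, which would yield
\[
\Mon(f)\supseteq A_{d_r}^N\rtimes(A_{d_{r-1}}\wr\cdots\wr A_{d_1}) = A_{d_r}\wr A_{d_{r-1}}\wr\cdots\wr A_{d_1}.
\]

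Let $L$ and $M$ denote the splitting fields of $g-t$ and $f-t$ over $\mQ(t)$, so that $K=\Gal(M/L)$; let $x_1,\ldots,x_N\in L$ be the roots of $g-t$, and $E_i:=L(\{\text{roots of }f_r(z)-x_i\})$, so $M=E_1\cdots E_N$. The $i$-th coordinate projection identifies $\pi_i(K)\cong\Gal(E_i/L)$ as a normal subgroup of $\Mon(f_r)\in\{A_{d_r},S_{d_r}\}$; since $d_r\ge 5$, each $\pi_i(K)\in\{1,A_{d_r},S_{d_r}\}$. The case $\pi_i(K)=1$ is excluded by a ramification argument at $\infty$: by $\Mon(g)$-equivariance it would force $K=1$, hence $M=L$; but in the tower $\mQ(t)\subset\mQ(t)(z)\subset M$ (for $z$ a root of $f-t$), the middle extension is totally ramified of index $d_rN$ above $\infty$, while $L/\mQ(t)$ has inertia at $\infty$ of order $N$ (an $N$-cycle, arising from the total ramification of $g$ at $\infty$). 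This yields $d_rN\le N$, contradicting $d_r\ge 5$. Hence $\pi_i(K)\supseteq A_{d_r}$ for every $i$.

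To upgrade to $K\supseteq A_{d_r}^N$: the perfectness of $A_{d_r}$ for $d_r\ge 5$ places the commutator $[K,K]$ inside $K\cap A_{d_r}^N$ and ensures that each coordinate projection of $K\cap A_{d_r}^N$ surjects onto $A_{d_r}$. A Goursat-type analysis for subdirect products of the simple group $A_{d_r}$ then reduces the equality $K\cap A_{d_r}^N=A_{d_r}^N$ to the pairwise non-diagonality $\pi_{ij}(K\cap A_{d_r}^N)=A_{d_r}\times A_{d_r}$ for $i\ne j$ (the only alternative being the graph of an automorphism of $A_{d_r}$). Via the Galois correspondence and the uniqueness (for $d_r\ge 5$) of the natural transitive faithful degree-$d_r$ representation of $A_{d_r}$ and of $S_{d_r}$, diagonality corresponds to the existence of a polynomial $P\in L[z]$ of degree less than $d_r$ with $f_r(z)-x_i\mid f_r(P(z))-x_j$ in $L[z]$ (produced from a Galois-equivariant bijection of roots via a stabilizer-matching argument).

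The main obstacle is ruling out such $P$. In the linear case $\deg P = 1$, the divisibility encodes an affine self-equivalence $f_r(az+b)=a^{d_r}f_r(z)+c$ of $f_r$; by a classical rigidity result, such nontrivial self-equivalences of an indecomposable polynomial exist only when the polynomial is linearly related (over an algebraic closure) to $x^{d_r}$ or to a Chebyshev polynomial, contradicting $\Mon(f_r)\in\{A_{d_r},S_{d_r}\}$. In the case $\deg P\ge 2$, one extracts a nontrivial compositional factorization of $f_r$ via standard arguments from Ritt's theory of polynomial decompositions, again contradicting the indecomposability of $f_r$. Either way we reach a contradiction, verifying the pairwise non-diagonality and hence $K\supseteq A_{d_r}^N$, completing the induction.
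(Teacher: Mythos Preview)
There are two genuine gaps.

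\textbf{The splitting inference is unjustified.} From $K\supseteq A_{d_r}^N$ and $\Image\pi\supseteq H:=A_{d_{r-1}}\wr\cdots\wr A_{d_1}$ you conclude $\Mon(f)\supseteq A_{d_r}^N\rtimes H$. This does not follow: the preimage $\pi^{-1}(H)$ is an extension of $H$ by $K$, and you need a \emph{section} $H\to\pi^{-1}(H)$ to produce the semidirect product. The obstruction lives (after passing to $\pi^{-1}(H)/A_{d_r}^N\le C_2\wr_{[N]}\Mon(g)$) in $H^2$ with coefficients in a submodule of $C_2^N$, and is not automatically zero. The paper devotes all of \S\ref{sec:splitting} to this: Proposition~\ref{prop:splitting} proves the required splitting via Lemma~\ref{lem:induced} (which uses that the Schur cover $\hat A_d$ admits no faithful action on $2d$ points) and an iterated cohomological argument. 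Example~\ref{exam:non-split} shows that for other simple groups the analogous splitting can genuinely fail, so this step cannot be waved away.

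\textbf{The rigidity claim in the $\deg P=1$ case is false, and the argument ignores the hypotheses on $f_1,\dots,f_{r-1}$.} You assert that a nontrivial relation $f_r(az+b)=a^{d_r}f_r(z)+c$ forces $f_r$ to be linearly related to $x^{d_r}$ or $T_{d_r}$. This is wrong: take $f_r(z)=z^5+z$, which is indecomposable with $\Mon(f_r)=S_5$, is not linearly related to $z^5$ or $T_5$, yet satisfies $f_r(-z)=-f_r(z)$. With this $f_r$, if $g$ happens to be even (so that $-x_i$ is again a root of $g-t$), the map $P(z)=-z$ gives exactly the Galois-equivariant bijection you are trying to exclude, and one checks directly that $\pi_{ij}(K)$ is then diagonal. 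What actually rules this out is that an even $g$ factors through $x^2$, and Ritt's first theorem then forbids $g$ from also decomposing into indecomposables all of degree $\ge 5$. More generally, a nontrivial block system for the Goursat partition corresponds to a decomposition $g=g_1\circ g_2$, and one must argue (as the paper does via invariance/right-uniqueness and Remark~\ref{remark:polynomial monodromy do not embeed into automorphism group}) that the hypotheses on the $f_k$, $k<r$, preclude this. Your argument never invokes those hypotheses at this step. The $\deg P\ge 2$ case is also not substantiated: it is unclear how the divisibility $f_r(z)-x_i\mid f_r(P(z))-x_j$ in $L[z]$ yields a compositional factorization of $f_r$.
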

More generally when $f_i\in \mQ[x]$, $i=1,\ldots,r$ are indecomposable of degrees $d_i\geq 5$ and are not linearly related to $x^{d_i}$ or $T_{d_i}$ (over $\C$), the following holds for all $n\in \mathbb N$ by Corollary \ref{cor:pols iterative}: 
The group $\Mon(f_1\circ\dots\circ f_n)$ contains the (multiset of Jordan-H\"older) composition factors of $[\soc(\Gamma_i)]_{i=1}^n=\soc(\Gamma_n)\wr \cdots \wr \soc(\Gamma_1)$, where $\soc(
\Gamma_i)$ is the socle of $\Gamma_i:=\Mon(f_i)$, that is, the group generated by the minimal normal subgroups of $\Gamma_i$. When $\Gamma_i=A_{d_i}$ or $S_{d_i}$ as above, one has $\soc(\Gamma_i)=A_{d_i}$. Henceforth we use the following refined notion of largeness: For a decomposition $f=g\circ f_d$,  say that the kernel of  
the projection $\Mon(f)\ra\Mon(g)$ is {\it large} if it contains 
$\soc(\Gamma_d)^{\deg g}$. Thus $\Mon(f)$ contains the  composition factors of $[\soc(\Gamma_i)]_{i=1}^r$ if and only if the kernels of the projections 
$\Mon(f_1\circ\cdots\circ f_{i+1})\ra\Mon(f_1\circ\cdots \circ f_i)$ are large for $i=1,\ldots,r-1$. 

Applying this property to iterates of an indecomposable polynomial $f\in \mQ[x]$ of degree $d\geq 5$, not linearly related to $x^d$ or $T_d$, 
Hilbert's irreducibility theorem (HIT)  immediately yields 
the following corollary. Recall that a Hilbert subset of $\mQ$  
is the complement of  a union of finitely many value sets $g_i(X_i(\mQ))$ of maps $g_i:X_i\ra\mP^1_\mQ$. 
\begin{cor}\label{cor:spec}
Let $n\geq 2$ be an integer and $f\in \mQ[x]$ be indecomposable of degree $d\geq 5$ that is not linearly related to $x^d$ or $T_d$ over $\C$. Set $\Gamma:=\Mon(f)$. 
Then $\Image\rho_{f,a}^{(n)}$ contains the composition factors of $[\soc(\Gamma)]^n$ for all $a$ in a Hilbert subset of $\mQ$.
\end{cor}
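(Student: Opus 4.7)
The plan is to deduce Corollary~\ref{cor:spec} as essentially a formal consequence of Corollary~\ref{cor:pols iterative} applied to the $n$-fold iterate of $f$, combined with Hilbert's irreducibility theorem. The hypotheses on $f$ are tailor-made so that Corollary~\ref{cor:pols iterative} applies with $f_1=f_2=\cdots=f_n=f$ (same degree $d\geq 5$, same indecomposability assumption, same exclusion of linear relatedness to $x^d$ or $T_d$ over $\C$). Thus the first step is simply to invoke that corollary to conclude that $\Mon(f^{\circ n})=\Gal(f^{\circ n}(x)-t,\mQ(t))$ contains a subgroup whose multiset of Jordan--H\"older composition factors equals that of $[\soc(\Gamma)]^n=\soc(\Gamma)\wr\cdots\wr\soc(\Gamma)$.

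The second step is to transfer this from the generic fiber to rational specializations via HIT. More precisely, since $f^{\circ n}(x)-t\in\mQ(t)[x]$ is separable, there is a Hilbert subset $H\subset\mQ$ such that for every $a\in H$ the specialization homomorphism $t\mapsto a$ induces an isomorphism
\[
\Gal(f^{\circ n}(x)-t,\mQ(t))\;\xrightarrow{\ \sim\ }\;\Gal(f^{\circ n}(x)-a,\mQ)\;=\;\Image\rho_{f,a}^{(n)}
\]
(compatible with the action on roots, in particular respecting subgroup containments). Combined with the previous step, this immediately gives that $\Image\rho_{f,a}^{(n)}$ contains a subgroup whose composition factors are exactly those of $[\soc(\Gamma)]^n$, which is the desired conclusion.

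There is essentially no obstacle: all the arithmetic/group-theoretic content is packaged into Corollary~\ref{cor:pols iterative}, and HIT does the specialization routinely. The only point to be slightly careful about is that Corollary~\ref{cor:pols iterative} is stated in terms of ``composition factors contained in $\Mon$'' rather than as an explicit subgroup identification, but this is preserved under isomorphism and hence passes through the HIT specialization without issue. The resulting Hilbert set depends on $n$ (and on $f$), which is consistent with the statement of the corollary, since $n$ is fixed in advance.
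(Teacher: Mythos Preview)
Your proposal is correct and matches the paper's own treatment: the paper notes that Corollary~\ref{cor:spec} ``follows immediately from Corollary~\ref{cor:pols} and Hilbert's irreducibility theorem,'' which is exactly your two-step argument (apply Corollary~\ref{cor:pols iterative} with $f_1=\cdots=f_n=f$ to get the composition factors inside $\Mon(f^{\circ n})$, then use HIT to preserve the full Galois group under specialization $t\mapsto a$ on a Hilbert subset).
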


These largeness properties for $\Mon(f^{\circ n})$ and $\Image \rho_{f,a}^{(n)}$ are compatible with bounding the above prime densities. For ``nonspecial" $f$ and most $a\in \Z$, in \S\ref{sec:fp-cycles} we show that both the density 
of primes for which $a$ meets an orbit of $f$ mod $p$,  and the density 
of primes $p$ for which the number of irreducible factors of  $f^{\circ n}(x)-a$ mod $p$ is bounded by a constant,  are arbitrarily small:
\begin{cor}\label{cor:epsilon}
Suppose $\eps>0$, $C\in \mathbb N$, and $f\in \Q[x]$ is indecomposable of degree $d\geq 5$  which is not linearly related to $x^d$ or $T_d$ over $\C$. 
Then: \\ 
(1) the density of primes $p$ for which $f^{\circ n}(a_0)\equiv a$ (mod $p$) for some $n\in \mathbb N$  is at most $\eps$,
for all $a_0\in \Z$ and $a$ in a Hilbert subset $H(f,\eps)\subset \mQ$; and \\ 
(2)  the density of primes $p$ for 
which 
$f$ mod $p$ is $C$-stable is at most $\eps$, for all 
$a$ in a Hilbert subset  $H(f,\eps,C)\subseteq \mQ$. 
\end{cor}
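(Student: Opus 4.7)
The plan is to reduce both parts to asymptotic fixed-point and cycle statistics in iterated wreath products, then combine Corollary~\ref{cor:spec} with Chebotarev's density theorem. Set $\Gamma:=\Mon(f)$ and $S:=\soc(\Gamma)$. The hypothesis that $f$ is indecomposable of degree $d\geq 5$ and not linearly related over $\C$ to $x^d$ or $T_d$ forces, via the classification underlying Corollary~\ref{cor:pols iterative}, that $S$ is a transitive permutation group on $d$ points whose fixed-point generating function
\[
\phi(x):=\tfrac{1}{|S|}\sum_{\sigma\in S}x^{|F(\sigma)|}
\]
satisfies $\phi(1)=\phi'(1)=1$ (by transitivity and Burnside's lemma) and $\phi''(1)>0$. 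The wreath product identity gives that the proportion of fixed-point-free elements in the iterated wreath $[S]^n$, acting on the $d^n$ leaves of the tree $T^{(n)}$, equals $\phi^{\circ n}(0)$, which tends to $1$ as $n\to\infty$ (at rate $\sim 2/(n\phi''(1))$). A similar but finer recursion shows that the proportion of elements of $[S]^n$ with at most $C$ orbits on the leaves also tends to $0$ as $n\to\infty$.

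Given $\eps>0$ and (for part (2)) $C\in\mathbb{N}$, fix $n$ so that the relevant proportion in $[S]^n$ is at most $\eps$. By Corollary~\ref{cor:spec}, there exists a Hilbert subset $H\subseteq\mQ$ such that for all $a\in H$, $G_n:=\Image\rho_{f,a}^{(n)}$ contains $[S]^n$ as a subgroup acting naturally on $T^{(n)}_{f,a}$; combined with the large-kernel formulation preceding Corollary~\ref{cor:spec}, the same recursion shows that the analogous proportion in $G_n$ is also $\leq\eps$. Chebotarev's density theorem applied to the splitting field of $f^{\circ n}(x)-a$ then yields: for part (1), the density of primes $p$ such that $a_0\pmod p$ lies in $\bigcup_{m\leq n}(f^{\circ m})^{-1}(a)\pmod p$ equals the fixed-vertex proportion in $G_n$, hence is at most $\eps$; extending to the full union $\bigcup_{m\in\mathbb{N}}T_m$ adds zero density, by a Borel--Cantelli-type argument exploiting the doubly exponential growth of $|f^{\circ m}(a_0)-a|$ for $m>n$. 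For part (2), the density of primes $p$ at which $f^{\circ n}(x)-a\pmod p$ has at most $C$ irreducible factors equals the ``at most $C$ orbits'' proportion in $G_n$ (hence $\leq\eps$), and $C$-stability at every iterate level implies this condition at level $n$.

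The main obstacle is the cycle-count asymptotic underlying part (2): while fixed-point statistics reduce to the single iteration $\phi^{\circ n}(0)$, controlling the proportion of elements with at most $C$ orbits requires tracking the joint distribution of cycle counts across tree levels, via a more involved multivariate generating-function recursion. A secondary technical point is translating the containment $[S]^n\leq G_n$ from Corollary~\ref{cor:spec} into the proportion bound in all of $G_n$; this is handled by the large-kernel structure, which propagates through the wreath recursion and forces the statistics of $G_n$ to match those of $[\Gamma]^n$ up to the top-level extensions by $\Gamma/S$.
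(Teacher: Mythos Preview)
Your high-level strategy matches the paper's: reduce to fixed-point and cycle statistics in $G_n=\Image\rho^{(n)}_{f,a}$, invoke Chebotarev, and use Corollary~\ref{cor:spec} to produce the Hilbert set. The paper packages the statistics as Propositions~\ref{lem:cycles1} and~\ref{lem:cycles2} (with Lemma~\ref{prop:proportion}), so its proof of Corollary~\ref{cor:epsilon} is a one-line citation of those. Your generating-function computation $\phi^{\circ n}(0)\to 1$ is a valid substitute \emph{for the pure wreath product} $[S]^n$, but that is not the group at hand: Corollary~\ref{cor:spec} only guarantees that each kernel $K_m$ is large, i.e.\ contains $S^{d^{m-1}}$; it does not say $G_n$ contains $[S]^n$ as a subgroup (that is Proposition~\ref{prop:splitting}, which requires $\Gamma\in\{A_d,S_d\}$ and in any case would not transfer proportion bounds). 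Your appeal to ``the same recursion'' and to ``statistics of $G_n$ matching those of $[\Gamma]^n$'' is precisely the missing content: one must show, as in Lemma~\ref{prop:proportion}, that largeness of $K_m$ yields the coset inequality $p_m(0)\ge\sum_j p_{m-1}(j)\alpha^j$, with $\alpha$ the minimum fixed-point-free proportion over all \emph{cosets} of $S$ in $\Gamma$, and then argue (using the orbit bound via Cauchy--Frobenius) that this forces $p_m(0)\to 1$. The cycle-count estimate underlying part~(2) is likewise a direct coset argument (Proposition~\ref{lem:cycles1}), not a generating-function iteration on $[S]^n$.

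There is also a genuine error in your handling of part~(1). The Borel--Cantelli reduction for $m>n$ does not work: the number of prime divisors of $a_m-a$ grows like $d^m$, so the relevant sums diverge and no smallness of the union over $m>n$ follows. The correct reduction, used in the proof of Proposition~\ref{lem:cycles2}, is the elementary forward-orbit observation: if $a_m\equiv a\pmod p$ for some $m\ge n$, then $a_{m-n}$ is a root of $f^{\circ n}(x)-a$ modulo $p$, so the Frobenius at $p$ already has a fixed leaf at level $n$; the finitely many $m<n$ contribute only finitely many primes (assuming $a\neq a_m$). Relatedly, your claim that the density for $m\le n$ ``equals the fixed-vertex proportion in $G_n$'' conflates two conditions: Chebotarev controls the proportion of Frobenius elements with a fixed point, i.e.\ primes for which $f^{\circ n}(x)-a$ has \emph{some} root mod $p$, not those for which the specific value $a_0$ is a root.
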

If $f$ is post-critically finite (PCF) polynomial, that is, if the number of branch points of $f^{\circ n}$ is uniformly bounded over all $n$, then  Corollary \ref{cor:epsilon}.(1) follows from \cite[Theorem 1.1]{Jones-mon} (or \cite{Jones-recent}) and HIT, unless  $f$ is of an explicit exceptional form. 

Note, however, that  
HIT cannot be applied simultaneously to infinitely many $n$, see \cite{DK} for examples over hilbertian fields. It is therefore  unknown  whether the largeness of $\Mon(f^{\circ n})$, $n\in\mathbb N$, in the sense of Theorem \ref{thm:main-intro}, implies the existence of infinitely many $a\in \mQ$ such that the kernel of the projection $\Image \rho_{f,a}^{(n)} \ra \Image \rho_{f,a}^{(n-1)}$ is {\it large} even merely for infinitely many $n$. Here the kernel is large if, as above, it contains $\soc(\Gamma)^{d^{n-1}}$, where $\Gamma:=\Mon(f)$ and $d:=\deg f$.  There are several indications though that this  might be the case. 
In particular,  it follows if   the index of $\Image \rho_{f,a}^{(n)}$  in $\Mon(f^{\circ n})$ is uniformly bounded for $n\in\mathbb N$, an assertion that is widely believed for non-PCF maps \cite[Question 1.1]{BDGHT} and holds for quadratic and cubic  maps  conditionally on the ABC, Vojta, and eventual-stability conjectures \cite{BT,BDGHT1, Hindes, JKLMTW}. The last conjecture asserts that $f$ is {\it eventually stable} over $a$, that is,  the  number of irreducible factors of $f^{\circ n}(x)-a$ is bounded as $n\to\infty$,  for all but finitely many $a\in\mathbb Q$ \cite{JL}. 

For PCF polynomials $f$, we show that for infinitely many $a\in \mQ$, the image {$\Image \rho_{f,a}^{(n)}$ indeed contains the composition factors of  $[\soc(\Gamma)]^n$ for all $n\in\mathbb N$,    under conditions that hold for a great variety of choices of $f$}. 
Infinite families of polynomials $f$ satisfying the conditions are given in  Example \ref{exam:shabat}, 
and the analogous assertion for more general dynamical systems is proved in Proposition \ref{prop:spec}. From a group-theoretic viewpoint, related to the notion of {\it invariable generation}, these conditions hold for most $f$, see Remark  \ref{rem:cond}.  
Moreover, recently \cite{BGJT} proved the assertion  for the opposite scenario when $\Gamma$ is a $p$-group.

When indeed the kernels of the projections $\Image \rho_{f,a}^{(n)}\to \Image \rho_{f,a}^{(n-1)}$ are {large}, even merely for infinitely many $n$, the compatibility with prime densities goes even further: 
\begin{theorem}
\label{cor:cycles}
Let $f\in \mathbb{Q}[x]$ be indecomposable of degree $d\ge 2$ and $C\in \mathbb N$ a constant. Suppose $a\in \mathbb{Q}$ is not a branch point\footnote{The assumption that $a$ is not a branch point of $f^{\circ n}$, $n\in\mathbb N$ is redundant for a suitable definition of an arboreal representation,  cf.\ \S\ref{sec:fp-cycles}.} of any $f^{\circ n}$, $n\in\mathbb N$,   and that $\ker(\Image\rho_{f,a}^{(n)}\to \Image\rho_{f,a}^{(n-1)})$ is large for infinitely many $n\in \mathbb{N}$.
 Then: \\
(1) the density of primes $p$ for which $f$ mod $p$ is $C$-stable over $a$ is $0$. \\ 
(2) Suppose additionally that $f$ is eventually stable over $a$,  not linearly related to $x^d$ or $T_d$, and $d\ge 5$.  
Then the density of primes for which $f^{\circ n}(a_0)\equiv a$ mod $p$ for some $n$ is $0$, for all $a_0\in \mQ$ such that $a\notin \{f^{\circ n}(a_0): n \in \mathbb{N}\}$. 
\end{theorem}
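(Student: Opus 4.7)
The plan is to use Chebotarev's density theorem to reduce both statements to counting in the arboreal Galois images $G_n := \Image\rho_{f,a}^{(n)}$, and to exploit the large-kernel hypothesis via a recursive analysis down the tower $G_n \twoheadrightarrow G_{n-1}$. Denote by $c_n(\sigma)$ the number of orbits of $\sigma \in G_n$ on the $d^n$ leaves of $T^{(n)}_{f,a}$. The hypothesis that $a$ is not a branch point ensures that for all but finitely many primes $p$, the number of irreducible factors of $f^{\circ n}(x)-a \bmod p$ equals $c_n(\mathrm{Frob}_p)$.

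For Part (1), Chebotarev bounds the density of $C$-stable primes above by $p_n := P_{\sigma\in G_n}(c_n(\sigma)\leq C)$ for every $n$, so it suffices to show $p_n \to 0$ along the infinite set $\mathcal N$ of indices with large kernel. I will show the stronger statement $E[c_n(\sigma)]\to\infty$ along $\mathcal N$, whence Markov yields $p_n \leq C/E[c_n(\sigma)]\to 0$. Fix $\bar\sigma\in G_{n-1}$ with orbits $O_1,\ldots,O_k$ on level $n-1$ of sizes $m_1,\ldots,m_k$; then $c_n(\sigma) = \sum_j c(\tau_j)$, where $\tau_j$ is the permutation induced by $\sigma^{m_j}$ on the $d$ children of a representative of $O_j$. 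Writing $\sigma = \sigma_0\nu$ for a fixed lift $\sigma_0$ of $\bar\sigma$ and $\nu$ uniform in the kernel $\supseteq \soc(\Gamma)^{d^{n-1}}$, varying the coordinates $\nu_v \in \soc(\Gamma)$ independently at each vertex $v$ of level $n-1$ makes each $\tau_j$---by normality of $\soc(\Gamma)$ in $\Gamma$ and since the $O_j$ are disjoint---uniformly distributed in a single coset $\Sigma_j$ of $\soc(\Gamma)$ in $\Gamma$, independently across $j$. It remains to show $\bar c_\Sigma := |\Sigma|^{-1}\sum_{\tau\in\Sigma}c(\tau) \geq 1+\delta$ for some $\delta = \delta(\Gamma) > 0$ uniform in $\Sigma$: since $c(\tau)\geq 1$ with equality only for $d$-cycles (of which $S_d$ contains only $(d-1)!$), while the socle contains the identity $e$ with $c(e)=d\geq 2$, the coset average is bounded away from $1$ by a positive amount. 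Hence $E[c_n\mid\bar\sigma]\geq (1+\delta)\,c_{n-1}(\bar\sigma)$, and iterating gives $E[c_n]\geq (1+\delta)^{|\mathcal N \cap [1,n]|}\to\infty$, as desired.

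For Part (2), eventual stability lets us choose $N$ so that $f^{\circ n}(x)-a = \prod_{i=1}^K g_i(f^{\circ(n-N)}(x))$ for all $n\geq N$, with $g_i \in \mQ[x]$ irreducible. Apart from finitely many primes, any prime $p$ with $f^{\circ n}(a_0)\equiv a \bmod p$ for some $n$ must divide $g_i(f^{\circ m}(a_0))$ for some $i$ and $m\geq 0$, so it suffices to bound the density of such primes for each $i$ separately. For each $i$, the splitting fields of $\{g_i\circ f^{\circ m}\}_{m\geq 0}$ form an arboreal tower rooted at a root $\alpha_i$ of $g_i$, whose Galois groups are subquotients of the $G_n$. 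The assumptions $d\geq 5$ and $f$ not linearly related to $x^d$ or $T_d$, together with Theorem~\ref{thm:main-intro}, give $\soc(\Gamma) \supseteq A_d$ and show that this sub-tower also has large kernel infinitely often. A Jones-style refinement of Part (1)---pairing the cycle-count estimate with the ``prescribed residue'' factor $1/|\text{orbit}|$ (with orbit sizes bounded below by $d^m/K$ via eventual stability) and with the doubly-exponential growth of $|g_i(f^{\circ m}(a_0))|$ (controlling the count of primes $p\leq X$ dividing each $g_i(f^{\circ m}(a_0))$)---yields density $0$; summing over $i\leq K$ completes the proof.

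The main obstacle lies in Part (2): adapting the counting of Part (1) to the ``prescribed fixed leaf equal to $a_0\bmod\mathfrak p$'' regime requires an effective, analytic refinement of the Chebotarev argument that combines the cycle-count growth with size bounds on the integers $g_i(f^{\circ m}(a_0))$, and the inheritance of the large-kernel property from the $a$-tower to each $\alpha_i$-sub-tower needs the arboreal structure to be transferred compatibly despite the shift of base point from $a\in\mQ$ to $\alpha_i\in\overline{\mQ}$.
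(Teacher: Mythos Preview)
Your argument for Part~(1) has a genuine gap at the ``Markov'' step. The inequality you assert, $P(c_n\le C)\le C/E[c_n]$, is not Markov's inequality (which bounds upper tails, not lower tails) and is false in general: a large mean does not by itself force $P(c_n\le C)\to 0$, since $c_n$ could equal $1$ with high probability while a heavy tail inflates the expectation. Your recursive setup (independent $\tau_j$, each uniform in a $\soc(\Gamma)$-coset) is correct and is essentially the paper's, but the paper extracts a different statistic: at each large-kernel step $m$ it shows, by varying a \emph{single} coordinate of $\soc(\Gamma_m)$, that the proportion of lifts of any $\bar\sigma\in G_{m-1}$ with \emph{strictly more} orbits than $\bar\sigma$ is at least $1/d_m$. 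Since $c_n$ is nondecreasing in $n$, having $c_M\le C$ forces the orbit count to increase at fewer than $C$ of the $N$ large-kernel steps below $M$, which the paper bounds by an explicit binomial-type sum tending to $0$. Your expectation estimate $E[c_n\mid\bar\sigma]\ge(1+\delta)c_{n-1}(\bar\sigma)$ is a consequence of the same setup but is too weak on its own; to salvage your route you would need to upgrade it to a probability bound on $\{c_n=c_{n-1}\}$. (A minor additional point: your justification of $\bar c_\Sigma>1$ via ``the socle contains the identity'' only treats the trivial coset; for general cosets you need a separate argument.)

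For Part~(2) your plan diverges sharply from the paper and, as written, does not constitute a proof. The paper's reduction is direct and avoids any analytic input: for each fixed $m$, apart from finitely many primes, the condition ``$a_n\equiv a\bmod p$ for some $n$'' forces $f^{\circ m}(x)-a$ to have a root mod $p$, i.e.\ the Frobenius in $G_m$ has a fixed point; hence it suffices that the proportion of fixed-point-free elements of $G_m$ tend to $1$, which is established by a coset-counting argument (Lemma~\ref{prop:proportion}) using a uniform lower bound $\alpha>0$ on the proportion of fixed-point-free elements in every $\soc(\Gamma)$-coset. This is precisely where $d\ge 5$ and $f\not\sim x^d,T_d$ enter, and eventual stability is used only to bound the number of $G_m$-orbits in the Cauchy--Frobenius step. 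Your alternative---factoring $f^{\circ n}(x)-a$ and passing to sub-towers over roots $\alpha_i\in\overline{\mathbb Q}$---has two unjustified steps: first, Theorem~\ref{thm:main-intro} concerns geometric monodromy over $\mathbb Q(t)$ and does not transfer the large-kernel hypothesis on the \emph{specialized} groups $G_n$ to the $\alpha_i$-towers (nor does it yield $\soc(\Gamma)\supseteq A_d$; it \emph{assumes} $\Mon(f_i)\in\{A_{d_i},S_{d_i}\}$); second, the ``Jones-style refinement'' you invoke is left as a black box, whereas the paper needs no size estimates on $g_i(f^{\circ m}(a_0))$ at all.
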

Part (1) of Theorem \ref{cor:cycles} extends \cite[Theorem 1.1]{Ferr} 
since the index growth rate assumed in  \cite{Ferr} does not allow large kernels in our sense.  Part (1) follows from  Proposition \ref{lem:cycles1} which  addresses $C$-stability for more general dynamical systems. 
Part (2) follows from Proposition \ref{lem:cycles2}. The additional eventual-stability assumption of this part, conjectured in \cite{JL},  is automatic when $\ker(\Image\rho_{f,a}^{(n)}\to \Image\rho_{f,a}^{(n-1)})$ is large for {\it all} $n$, since the fiber of $f^{\circ n}$ over $a$ is then irreducible  for all $n$.

\subsubsection*{Obstructions to largeness} The above Theorem \ref{thm:main-intro} and Corollary \ref{cor:pols iterative}  are consequences of the main theorem, Theorem \ref{thm:main}. As Theorem \ref{thm:main} is of purely group-theoretic nature, it applies more generally to decompositions $f=g\circ h$, for finite maps  $g,h$ between varieties over arbitrary fields of characteristic $0$, asserting that the kernel $K$ of the projection $\Mon(f)\ra\Mon(g)$ is large.  As in all of the above, it assumes $\soc(\Gamma)$ is a nonabelian minimal normal subgroup of $\Gamma:=\Mon(h)$. 
For such decompositions, we define four obstructions whose vanishing is necessary for the largeness of $K$, see \S\ref{sec:ker} and \S\ref{sec:nec}. 
Consequently, Theorem \ref{thm:main} assumes the  four obstructions vanish. 

The main obstruction, among the four, is the existence of Ritt moves for  decomposition of $f=g\circ h$, namely, the existence of another decomposition $f=g_2\circ h_2$ where $h_2$ and $h$ do not have a nontrivial common right factor, cf.\ the notion of invariant decompositions in \S\ref{sec:inv}.
The second and third obstructions concern trivial extensions of monodromy groups: namely, it is possible that the kernel $K$ of the projection $\Mon(f)\ra\Mon(g)$ is trivial, see Galois-proper decompositions in \S\ref{sec:ker}; and also that $\Mon(f)$ embeds into $\Aut(\Mon(h))$ forcing $K$ to be small, see \S\ref{sec:diagonal}. The last obstruction is more intricate and relates to the conjugation action on minimal normal subgroups of  $K$, see the notion of conjugation compatibility in \S \ref{sec:conj}.

It is rather easy to deduce from  theorems of Ritt and Burnside that the above obstructions vanish for polynomial compositions $f_1\circ \cdots \circ f_r$ when $\Mon(f_i)$ are nonsolvable, and even under more relaxed conditions, see Corollaries \ref{cor:pols} and \ref{cor:pols iterative}. 
We expect such vanishing would occur much more generally, and in particular for many other rational functions and maps of low genus curves. However, when $\soc(\Gamma)$ is abelian,  the structure of the kernel is much less rigid and we expect in that case more obstructions are needed. From this perspective, the previously studied cases of quadratic and cubic polynomials are perhaps the most subtle.

Finally, we note that the classification of finite simple groups is used through the above only in the proof of Corollary \ref{cor:pols iterative} for the solvability of the outer automorphism groups of simple group, that is, via Schreier's conjecture. This is a nonessential use:   without it, the assertion holds for polynomial maps $f$ whose composition factors admit (nonsolvable) monodromy groups with solvable outer automorphism groups.

\subsubsection*{Further applications}
Theorem \ref{thm:main} is expected to have many further applications, even outside the problems mentioned above. We mention applications to two local-to-global problems for polynomials $f,g$ over the rationals (or over general number fields). 
The first problem, also known as Davenport's problem, asks whether two polynomials $f$ and $g$ which have the same value set modulo $p$ for  all but finitely many primes $p$ (in which case $f$ and $g$ are also called Kronecker-conjugate) are necessarily linearly related. The second problem asks whether, for a polynomial $f$, the \DN{maximal} number $r$ for which the function $\mathbb F_p\ra\mathbb F_p$, $x\mapsto f(x)$ mod $p$ is at least $r$-to-$1$ for all but finitely many primes $p$ coincides with the maximal number $r$ for which  $f:\mQ\ra \mQ$ is at least $r$-to-$1$ over all but finitely many values. While counterexamples for both questions are known, Theorem \ref{thm:main} is used in \cite{Ros} to deduce a positive answer to both problems when no right composition factor of $f$ is linearly related to $x^d$ and $T_d$. 

\subsubsection*{Acknowledgements} 
We thank A.\ Behajaina, R.\ Guralnick, and R.\ Jones for comments and encouragement. J.\ K.\ was supported by the National Research Foundation of Korea (NRF Basic Research Grant RS-2023-00239917). D.\ N.\  was supported by the Israel Science Foundation, grant no.\  353/21,  and is  grateful for the hospitality of the Institute for Advanced Study. J.\ K.\ and D.\ N.\ are grateful for the hospitality of the SQuaREs program of the American Institute of Math. All computer calculations were carried out using MAGMA \cite{Magma}.

\section{Preliminaries}\label{sec:prelim}
\DN{Throughout the paper $F$ is a field of characteristic $0$ and all groups are finite}. 

\subsection{Maps, monodromy and ramification}\label{sec:def}
By a map  $f:X\ra X_0$ over $F$, we mean a \DN{finite (dominant, generically unramified) morphism} of (smooth projective irreducible) \DN{varieties} over $F$. 
Let $\Br(f)\subseteq X_0(\oline F)$ and $\Ram(f)\subseteq X(\oline F)$ denote the branch locus and ramification locus of $f$. 

The monodromy group $\mon_F(f)$ of a map $f$ is the image of the action of the (\'etale) fundamental group $\pi_{1}(X_0\setminus \JK{\Br(f)})$ on the fiber $f^{-1}(x_0)$ of a base point $x_0\in X_0\setminus \JK{\Br(f)}$. \JK{Since $X$ is assumed irreducible, this action is transitive. }
When $F$ is fixed and is clear from the context, we shall simply write $\Mon(f)$ for $\Mon_F(f)$. Note that as an abstract group $G=\Mon(f)$ is the Galois group of a Galois map $\tilde X\ra X_0$ obtained as the quotient corresponding to the kernel of the action of $\pi_1$. 
We shall often identify $f$ with the projection $\tilde X/H\ra \tilde X/G\cong X_0$, where $H\leq G$ is the stabilizer of a point in $f^{-1}(x_0)$.  Moreover since the action of $\Mon(f)$ is equivalent to the action of $G$ on $G/H$, we shall identify the set $\Mon(f)$ acts on with $G/H$. \DN{We say $f$ is Galois if $\Mon(f)$ acts regularly\footnote{Equivalently,  the induced function field extension $F(X)/F(X_0)$ is Galois.}, and that $\tilde f:\tilde X\ra X_0$ is a Galois closure of $f$ if $\tilde f$ is a minimal Galois map such that $\tilde f=h\circ f$ for some map $h$.}

The ramification type $E_f(P)$ of $f$ over $P$ is  defined to be the multiset of ramification indices $e(Q/P)$, where $Q$ runs over preimages $Q\in f^{-1}(P)$. The ramification type of a map $f$ of \DN{curves} is then the (finite) multiset  $E_f(P), P\in \Br(f)$. \DN{For example,  we write the ramification type of the polynomial map given by $x^d$, on affine coordinates, as $[d],[d]$}. The Riemann--Hurwitz contribution over $P$ is defined as $R_f(P)=\sum_{e\in E_f(P)}(e-1)= d - \#f^{-1}(P)$, where $d:=\deg f$. 
For such $f$, the genus $g_X$ of $X$ is given by the Riemann--Hurwitz formula:
$$ 2(g_X-1)=2d (g_{X_0}-1) +  \sum_{P\in \Br(f)}R_f(P). $$
Finally, we recall that 
$E_f(P)$ for $P\in X_0(\oline F)$ coincides with the multiset of orbit lengths of the inertia group $I_P$ over $P$, see e.g.\ \cite[\S 3]{GTZ}. 

\subsection{Imprimitive permutation groups}\label{sec:imprimitive}
Given $U\leq \Sym(I), V\leq \Sym(J)$,  recall that the wreath product $U\wr_J V$ is the semidirect product $\Ind_J U\rtimes V$, where $\Ind_J U$ is the power group $U^J$ equipped with the $V$-action that permutes its coordinates. It is equipped with a natural action on a tree whose branches are indexed by $j\in J$ and whose leaves are indexed by $(i,j)$, $i\in I,j\in J$. The edges connect each $(i,j)\in I\times J$ to $j\in J$. Considering the actions on leaves, we denote the set on which $U\wr_J V$ acts on simply by $I\times J$. 
In this action  $(u_j)_{j\in J}\in U^J$ acts on each branch, that is,  $(u_j)_j\cdot(i,j) = (i,u_j\cdot j)$, and $v\in V$ acts by permuting the branches, that is, $v\cdot (i,j)=(v\cdot i,j)$ for $(i,j)\in I\times J$. We shall denote by $P_J=P_J(I\times J)$ the partition of $I\times J$ whose blocks are the branches $\{(i,j)\in I\times J\,|\,i\in I\}$, $j\in J$. 

Conversely, for a transitive $G\leq \Sym(S)$ and a ($G$-invariant) partition  $J$ of $S$,
let $V$ be the image of the action of $G$ on $J$, and
$U$ the image of the action of the stabilizer of a set $j\in J$ (\DN{a.k.a.\ the block stabilizer}) on $j$. Note that since $G$ is transitive on $J$, the action is isomorphic for all $j\in J$.  It is  well known that then $G$ embeds (as a permutation group) into  $U\wr_J V$. \DN{We call the sets $j\in J$ blocks and say $J$ is a nontrivial partition if $1<\#j<\#S$. Recall that $G$ is imprimitive if there exists a nontrivial partition, otherwise primitive}. 

Finally, we recall the following correspondence between partitions and subgroups:
\begin{lemma}(\cite[Theorem 1.5A]{JhonDixon})\label{lem:correspondence}
Let $G$ act transitively on $S$, and fix $\alpha\in S$. 
Let $G_\alpha\leq G$ be the stabilizer of $\alpha\in S$. Then there is a one to one correspondence 
\begin{equation*}
    \{\text{partitions of }S\}\stackrel{1:1}{\leftrightarrow} \{G_\alpha\leq H\leq G\},
\end{equation*} given by associating to a partition $J$ the stabilizer $G_{J_\alpha}$ of the block  $J_\alpha$ in $J$ containing $\alpha$, and conversely associating to $G_\alpha\leq H\leq G$ the partition 
with block $H\cdot \alpha$.   
\end{lemma}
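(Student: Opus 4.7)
The plan is to construct the two maps explicitly and show they are mutually inverse, reading ``partition of $S$'' in the sense of $G$-invariant partitions (block systems), since the inverse map described in the statement, $H \mapsto \{H\alpha,\ldots\}$, only produces a $G$-invariant partition. Let $\Phi$ send a $G$-invariant partition $J$ to the setwise stabilizer $G_{J_\alpha}$ of the unique block $J_\alpha$ containing $\alpha$, and let $\Psi$ send a subgroup $H$ with $G_\alpha \leq H \leq G$ to the partition whose blocks are the translates $g \cdot (H\alpha)$, $g \in G$.

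The first preliminary step is to verify well-definedness. For $\Phi$, any $g \in G_\alpha$ sends the block $J_\alpha$ to a block still containing $\alpha$, so $g \in G_{J_\alpha}$; hence $G_\alpha \leq \Phi(J)$. For $\Psi$, the translates $g(H\alpha)$ cover $S$ by transitivity of $G$, and if $g_1 H\alpha$ and $g_2 H\alpha$ meet, then $h_1 \alpha = g_1^{-1} g_2 h_2 \alpha$ for some $h_i \in H$, so $h_1^{-1} g_1^{-1} g_2 h_2 \in G_\alpha \subseteq H$, forcing $g_1^{-1} g_2 \in H$ and $g_1 H\alpha = g_2 H\alpha$; this uses the assumption $G_\alpha \leq H$ in an essential way.

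The identities $\Phi \circ \Psi = \mathrm{id}$ and $\Psi \circ \Phi = \mathrm{id}$ then reduce to the same principle. For $\Phi\circ\Psi$: the block of $\Psi(H)$ containing $\alpha$ is $H\alpha$, and if $g \in G$ stabilizes $H\alpha$ setwise, then $g\alpha = h\alpha$ for some $h \in H$, whence $h^{-1}g \in G_\alpha \subseteq H$ and $g \in H$; the reverse inclusion is immediate. For $\Psi\circ\Phi$: with $H := G_{J_\alpha}$ one has $H\alpha \subseteq J_\alpha$ trivially, and conversely each $\beta \in J_\alpha$ equals $g\alpha$ for some $g \in G$ by transitivity, so the blocks $gJ_\alpha$ and $J_\alpha$ share $\beta$, hence coincide, forcing $g \in H$ and $\beta \in H\alpha$.

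I do not expect any real obstacle: this is a textbook Galois-type correspondence. The only point requiring care is the bookkeeping of when the inclusion $G_\alpha \leq H$ is used, namely at precisely the two places where one must upgrade ``$g$ carries $\alpha$ into $H\alpha$'' to ``$g$ lies in $H$'' --- an upgrade that fails without this hypothesis and is the reason arbitrary subgroups of $G$ do not enter the correspondence.
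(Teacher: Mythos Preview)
Your argument is correct and is the standard textbook proof of this correspondence; you rightly identify that ``partitions'' must mean $G$-invariant partitions, and you pinpoint exactly where the hypothesis $G_\alpha\le H$ is used. The paper itself does not supply a proof but simply cites the result from Dixon--Mortimer, so there is no alternative approach to compare against.
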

We note that if furthermore $G=\Mon(f)$, then the two sets also correspond to decompositions  $f=g\circ h$, where $g:\tilde X/G_\alpha\ra \tilde X/H$, $h:\tilde X/H\ra\tilde X/G$ are the natural projections. Up to a \DN{birational equivalence} \DN{of $\tilde X/G_\alpha$}, every decomposition of $f$  is of this form.

\subsection{Kernel properties}\label{sec:ker}
We say that 
a decomposition $f=g\circ h$ is {\it proper} if $\deg g,\deg h>1$.
As above this yields an embedding $\Mon(f)\leq \Mon(h)\wr \Mon(g)$. As in \S \ref{sec: intro - methods}, we say the kernel $K$ of the natural projection $\Mon(f)\ra\Mon(g)$ is {\it large} if $K\leq U^m$ contains $\soc(U)^m$, where $U:=\mon(h)$ and $m:=\deg g$. 

As noted in \S \ref{sec: intro - methods}, we often assume that $\soc(U)$ is a nonabelian minimal normal subgroup of $U$. 
\DN{This is a typical scenario for primitive permutation groups, see the Aschbacher--O'Nan--Scott theorem \cite[Theorem 11.2]{Gur}.}
For such $U$, $\soc(U)$ is a power of a nonabelian simple group, and hence its centralizer $C_U(\soc(U))$ in $U$, and in particular its center $Z(\soc(U))$ is trivial. Largeness ensures:
\begin{lemma}\label{lem:large-min-normal-A}
Let $f=g\circ h$ be a proper decomposition for which the kernel of $\Mon(f)\ra\Mon(g)$ is large. Then the kernel of the projection $\Mon(g_2\circ h)\ra \Mon(g_2)$ is also large for every proper decomposition $g=g_1\circ g_2$. 
\end{lemma}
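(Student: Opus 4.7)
The plan is to read everything off of the block structure induced by the triple decomposition $f = g_1 \circ g_2 \circ h$. Write $d_i := \deg g_i$ (so $m = d_1 d_2$), $U := \Mon(h)$, and let $G := \Mon(f)$ act on the fiber $\Omega := f^{-1}(x_0)$. The fiber carries two nested partitions coming from $g$ and $g_1$: the finer $g$-partition has $m$ blocks of size $\deg h$; the coarser $g_1$-partition groups these $g$-blocks into $d_1$ sets $\Omega_1,\dots,\Omega_{d_1}$ of $d_2$ $g$-blocks each. Via the wreath embedding of \S\ref{sec:imprimitive}, $G \hookrightarrow U \wr \Mon(g)$ with base group $U^m$, and we may further split the index set of $U^m$ according to the $g_1$-partition to write
\[
U^m \;=\; \prod_{i=1}^{d_1} U^{d_2},
\]
where the $i$-th factor corresponds to the $d_2$ $g$-blocks contained in $\Omega_i$, and projection to the $i$-th factor is exactly restriction of the action to $\Omega_i$.

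The first step is the containment $K \leq \bigcap_i \Stab_G(\Omega_i)$. Since $K = \ker(G\to\Mon(g))$ acts trivially on the set of $g$-blocks, it fixes every $g$-block setwise, hence fixes every union of $g$-blocks, including each $\Omega_i$. In particular $K \leq H_1 := \Stab_G(\Omega_1)$.

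The second step identifies the target. By Lemma \ref{lem:correspondence} applied to the subgroup chain corresponding to $g_1$, the permutation action of $H_1$ on $\Omega_1$ is (conjugate to) the action of $\Mon(g_2 \circ h)$ on its degree-$d_2\deg h$ fiber, and the $g$-blocks inside $\Omega_1$ play the role of the $g_2$-blocks for this smaller composition. Restricting $H_1$ to $\Omega_1$ therefore realizes $\Mon(g_2\circ h) \hookrightarrow U \wr \Mon(g_2)$ with base group $U^{d_2}$, and the kernel $K' := \ker(\Mon(g_2\circ h)\to\Mon(g_2))$ is precisely $\Mon(g_2\circ h)\cap U^{d_2}$ in this copy of $U^{d_2}$.

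The third step combines these: the image of $K$ under the restriction map $H_1 \to \Sym(\Omega_1)$ sits inside $\Mon(g_2\circ h)$, and because $K$ acts trivially on every $g$-block inside $\Omega_1$, this image lies in $K'$. But under the identifications above, this restriction map coincides with the projection $U^m \to U^{d_2}$ onto the factor indexed by $\Omega_1$. Largeness of $K$ gives $K \supseteq \soc(U)^m = \prod_i \soc(U)^{d_2}$, so the projection to the $\Omega_1$-factor surjects onto $\soc(U)^{d_2}$. Hence $K' \supseteq \soc(U)^{d_2}$, which is exactly largeness of the kernel of $\Mon(g_2\circ h)\to\Mon(g_2)$.

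The main thing to be careful about is the compatibility between the two wreath embeddings, i.e., matching the $U^{d_2}$ sitting inside the base group of $U \wr \Mon(g_2\circ h)$ (coming from $f=g\circ h$) with the base group of $U\wr \Mon(g_2)$ (coming from $g_2\circ h = g_2\circ h$); this is the only non-bookkeeping step, and it is immediate once one observes that the $g$-partition of $\Omega$, when intersected with $\Omega_1$, is the $g_2$-partition of the smaller composition $g_2\circ h$.
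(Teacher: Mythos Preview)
Your proof is correct and follows essentially the same approach as the paper: identify $\Mon(g_2\circ h)$ with the action of the block stabilizer $H_1=\Stab_G(\Omega_1)$ on $\Omega_1$, observe that $K\leq H_1$ and that its restriction to $\Omega_1$ lands in the kernel $K'$, and then use $\soc(U)^m\leq K$ to conclude that the projection onto the $\Omega_1$-factor of $U^m$ yields $\soc(U)^{d_2}\leq K'$. The paper phrases the same argument in coset language (with $J=G/H$, $J_1=H_1/H$, and the projection $U^J\rtimes(\Sym(J_1)\times\Sym(J\setminus J_1))\to U\wr_{J_1}\Sym(J_1)$), but the content is identical.
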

\begin{proof}
Let $G=\Mon(f)\leq U\wr_J V$, where $U:=\Mon(h)$,  $V:=\Mon(g)$. Let $H\leq H_1$ be stabilizers in the action of $G$ through $V$ and $\Mon(g_1)$, resp. As above identify $J$  with $G/H$. Let  
$G_2:=\Mon(g_2\circ h)\leq U\wr_{J_1} V_2$, where  $V_2 := \Mon(g_2)$ and 
 $J_1:=H_1/H$. 
The group $G_2$  then coincides with the image of the action of $H_1$ on $H_1/H$. In particular, it is the image of $H_1$ under the natural projection  $$ U^J\rtimes \left(\Sym(J_1)\times \Sym(J\setminus J_1)\right)\ra U\wr_{J_1}\Sym(J_1).$$  Since $K=\ker(G\ra V)$ contains $\soc(U)^J$, the image of $K$ under this projection contains $\soc(U)^{J_1}$. Since this image is contained in  $\ker(G_2\ra V_2)$, the claim follows. 
\end{proof}
\begin{lemma}\label{lem:large-min-normal-B}
Suppose $V$ acts faithfully on $J$, and $G\leq U\wr_J V$ projects onto $V$ with large kernel, 
\JK{i.e., $K_0:=\soc(U)^J$} is contained in $K:=\ker(G\ra V)$. 
If $Z(\soc(U))=1$, then $C_G(K_0)=1$ and hence $\soc(G)=\soc(K)=K_0$. 
\end{lemma}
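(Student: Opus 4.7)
The plan is to prove $C_G(K_0) = 1$ by a direct wreath-product computation, and then deduce both socle equalities from it by standard normal-subgroup arguments. There is no single hard step, but the cleanest route requires keeping careful track of indices in the wreath product.

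First I would write an arbitrary element $g \in C_G(K_0)$ in the form $g = (u_j)_{j \in J} \cdot v$ with $(u_j)_j \in U^J$ and $v \in V$, and use the standard conjugation rule in the wreath product
\[
g \cdot (s_j)_j \cdot g^{-1} = (u_j \, s_{v^{-1}\cdot j} \, u_j^{-1})_j
\]
for $(s_j)_j \in K_0 = \soc(U)^J$. Evaluating on a tuple supported at a single coordinate $j_0$ with entry a nontrivial $s \in \soc(U)$, the conjugate is nontrivial at the coordinate $v \cdot j_0$ while the original is trivial there unless $v \cdot j_0 = j_0$. Letting $j_0$ range over $J$ and invoking the faithfulness of the $V$-action on $J$ then yields $v = 1$. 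With $v = 1$, the centralizing condition reduces to $u_j \in C_U(\soc(U))$ for every $j \in J$. The hypothesis $Z(\soc(U)) = 1$, combined with the standing assumption of the section that $\soc(U)$ is a (necessarily unique) nonabelian minimal normal subgroup of $U$, forces $C_U(\soc(U)) = 1$: this centralizer is a normal subgroup of $U$ with trivial intersection with $\soc(U)$, so if nontrivial it would contain a minimal normal subgroup of $U$ and hence $\soc(U)$ itself, contradicting $Z(\soc(U))=1$. Therefore $g = 1$.

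Given $C_G(K_0) = 1$, the socle equalities follow quickly. Since $\soc(U)^J$ is characteristic in $U^J$ and $U^J$ is normal in $U \wr_J V$, we have $K_0 \triangleleft G$. Writing $\soc(U) = T^k$ for a nonabelian simple $T$, the group $K_0$ is a direct product of copies of $T$ permuted by $G$ via conjugation, and decomposing into $G$-orbits on these simple factors expresses $K_0$ as an (internal) direct product of minimal normal subgroups of $G$, so $K_0 \subseteq \soc(G)$. Conversely, any minimal normal subgroup $N$ of $G$ not contained in $K_0$ must satisfy $N \cap K_0 = 1$ (by minimality), whence $[N,K_0] \subseteq N \cap K_0 = 1$ and thus $N \subseteq C_G(K_0) = 1$, a contradiction. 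Hence $\soc(G) = K_0$. Applying the same reasoning with $K$ in place of $G$, and using the inclusion $C_K(K_0) \subseteq C_G(K_0) = 1$, gives $\soc(K) = K_0$.

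The only delicate point is the step $C_U(\soc(U))=1$: the literal hypothesis $Z(\soc(U))=1$ alone would not suffice, but it does once combined with the running assumption (recalled just before the lemma) that $\soc(U)$ is a nonabelian minimal normal subgroup of $U$. The remainder of the argument is entirely formal.
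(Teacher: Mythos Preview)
Your proof is correct and follows essentially the same route as the paper's: first establish $C_U(\soc(U))=1$, then argue that any element with nontrivial $V$-component permutes the simple factors of $K_0$ nontrivially and so cannot centralize it, and finally deduce both socle equalities from $C_G(K_0)=1$ by the standard ``any other minimal normal subgroup would centralize $K_0$'' argument. One small correction to your closing remark: the hypothesis $Z(\soc(U))=1$ alone already gives $C_U(\soc(U))=1$, since any minimal normal subgroup of $U$ lies in $\soc(U)$ by definition, so a minimal normal subgroup inside $C_U(\soc(U))$ would land in $\soc(U)\cap C_U(\soc(U))\subseteq Z(\soc(U))=1$ --- no appeal to uniqueness of $\soc(U)$ as a minimal normal subgroup is needed (and indeed the paper's proof phrases it exactly this way).
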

\begin{proof}  
Since  $\soc(U)$ has a trivial center, it is a product of nonabelian simple groups. \DN{As $C_U(\soc(U))$ is a normal subgroup of $U$ which is disjoint from $\soc(U)$, it must be trivial by the definition of $\soc(U)$}. It follows that $K_0$ has a trivial centralizer in  $K\leq U^m$. Since any other minimal normal subgroup of $K$ must centralize $K_0$, we see that $\soc(K)=K_0$. 
To prove that $C_G(K_0)=1$, it remains to note that every lift of a nontrivial element in $V$ to $G$ permutes the coordinates of  $K_0=\soc(U)^{J}$, and hence does not  centralize $K_0$.

To see that $\soc(G)=K_0$, \JK{write $K_0$ as a direct product of nonabelian simple groups. Since $K_0\trianglelefteq G$,  the group $G$ acts by conjugation on the components of this direct product, implying that $K_0$ is a product of minimal normal subgroups of $G$.} Hence $\soc(G)\JK{\supseteq} K_0$. However, as $C_G(K_0)=1$, we deduce $G$ has no other minimal normal subgroups, so that $\soc(G)=K_0$.
\end{proof}

The other extreme of having a large kernel is having a trivial kernel: A proper decomposition $f=g\circ h$ is {\it Galois-proper} if the kernel of the projection $\mon(f)\ra \mon(g)$ is nontrivial. Galois-properness is  clearly  necessary for the kernel of $\Mon(f)\ra \Mon(g)$ to be large. It is also clear that if $f=g\circ h$ is Galois proper, then $g_2\circ h$ is Galois-proper for every proper decomposition $g=g_1\circ g_2$. 
For polynomial maps $f\in F[x]$, proper decompositions $f=g\circ h$ are always Galois-proper by Abhyankar's lemma, see e.g.\  \cite[Lemma 2.8]{KN}.

Finally, we say that $K\leq U^J$ is {\it diagonal} if its projection $K\ra U$
to each of the $J$ coordinates is injective. Note that since $\Mon(g)$ is transitive on $J$, the images of all these projections are isomorphic \cite[Remark 3.2]{KN}. We describe the monodromy groups of decompositions with diagonal kernel in  \S\ref{sec:diagonal}. For a group $U$ with $Z(\soc(U))=1$,  
and an integer $r\ge 1$, we say that an element $x=(x_j)_{\DN{j\in J}} \in U^J$, resp.\ the cyclic subgroup generated by $x$, is {\it diagonal}, if $x_j$, $j\in J$ are all conjugate\footnote{Since $\soc(U)$ has trivial centralizer, we may identify $U$ with a subgroup of $Aut(\soc(U))$.} to each other in $Aut(\soc(U))$.  Otherwise, $x$ is called nondiagonal. In particular, $x$ is nondiagonal as soon as its component entries do not all have the same order.  Clearly, diagonality of elements is preserved under conjugation \JK{in $\Aut(\soc(U))\wr \textrm{Sym}(J)$}. \DN{Diagonality of subgroups and elements are compatible properties, see Lemma \ref{lem:diagvsdiag}.}

\subsection{Specialization}\label{subsec:spec}
To every map $f:X\ra \mP^1_F$, one may associate
the function field extension $F(X)/F(t)$, where $F(t)$ denotes the rational function field. Then, given any $t_0\in \mP^1_F$, the specialization\footnote{The specialization can also be referred to as the ring of coordinates on the fiber of $f$ over $t\mapsto t_0$.}  of $f$ at $t\mapsto t_0$ is the \'etale algebra extension $(\prod_i K_i)/F$, where the $K_i$ are the residue extensions of $F(X)$ at places extending $t\mapsto t_0$. When $f$ is a Galois map, all the $K_i$ are isomorphic, and we identify the specialization with the (Galois) field extension $K_i/F$. We call its Galois group, the Galois group of the specialization of $f$ at $t\mapsto t_0$, as in \cite{DKLN2}. 

When $f$ is given by
a polynomial $P(t,x)\in F(t)[x]$ (by which we mean that the affine curve $X_P$ given by $P(t,x)=0$ is birational to $X$, and furthermore $f$ identifies on a Zariski open subset with the natural projection $X_P\ra \mathbb A^1$, $(t,x)\mapsto t$), there exists a finite set $\Bad_P$ of values $t_0\in \mathbb{P}^1_F$ outside of which specialization of $f$ at $t_0$ is simply given by $P(t_0,x)=0$,\footnote{Concretely  $Bad_P$  consists of the values $t_0$ where $P(t_0,x)$ is undefined, where its $x$-degree is less than that of $P(t,x)$, \DN{and} where $(t-t_0)$ divides the discriminant of $P$ with respect to $x$.} and the Galois group of $P(t_0,x)$ is permutation-isomorphic to a subgroup of the monodromy group $\Mon_F(f)$. 

In the special case $P(t,x)=p(x)-t$ with $p(x)\in F[x]$, corresponding to polynomial maps,  $Bad_P$ is simply the set of critical values of $p$, which is the same as the set of branch points of the underlying map $f$.

We \JK{will make use of the special case of the so-called specialization inertia theorem, which} relates ramification of $f$ with ramification in its specializations, cf.\ \cite{Beck} or \cite[\S 2.2.3]{Leg}. 
For a number field $F$ and a prime ideal $p$ of $O_F$,  two values $a,b \in \mathbb{P}^1(\overline{F})$ are said to meet at $p$ with multiplicity $e>0$, if there is a prime \JK{ideal} $\mathfrak{p}$ of $F(a,b)$ \JK{extending $p$} such that either $\nu_{\mathfrak{p}}(a)$ and $\nu_{\mathfrak{p}}(b)$ are both nonnegative and $\nu_{\mathfrak{p}}(a-b)=e$, or they are both negative and $\nu_{\mathfrak{p}}(1/a-1/b) =e$.
\begin{theorem}
\label{thm:ram_in}
Let $F$ be a number field and $f:X\to \mP^1_F$ a finite Galois 
\DN{map} with branch points $t_1,\ldots,t_r\in \mathbb{P}^1(\overline{F})$ and Galois group $G$. Then there exists a finite set $\mathcal{S}_0$ of prime ideals of $O_F$ such that for all $t_0\in \mathbb{P}^1_F$ and for all prime ideals $p\notin \mathcal{S}_0$ of $O_F$, the following holds. If there exists a branch point $t_i$ of $f$ such that $t_0$ and $t_i$ meet at $p$ with multiplicity $1$, then $p$ ramifies in the specialization at $t_0$, with inertia group conjugate to $\langle\tau\rangle$, where $\tau$ is the inertia group generator at $t\mapsto t_i$. 

If additionally $G$ has a trivial center \footnote{The trivial center assumption only ensures that in \cite[\S 2.2.3]{Leg} there are no additional exceptional primes arising from ``vertical ramification", see \cite[Proposition 2.3]{Beck}.}, then one may concretely choose $\mathcal{S}_0$ as the union of 
the set of primes dividing $|G|$;
the finite set of primes at which two branch points meet (with multiplicity $\geq 1$); and the set of primes ramifying in the extension of $F$ generated by all the branch points. 
\end{theorem}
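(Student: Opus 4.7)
The plan is to deduce the statement from the classical form of the specialization inertia theorem (Beckmann's theorem) as stated in \cite{Leg}, and then to verify under the trivial center hypothesis that $\mathcal{S}_0$ admits the explicit description claimed. No new arithmetic input is needed; the work consists of matching definitions and tracking which primes actually need to be excluded.

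First, I would reduce to the case where the branch point $t_i$ of interest lies at a finite $F$-rational (or at least $F(t_i)$-rational) place, by applying a linear fractional change of coordinates on $\mP^1_F$. After this reduction, the hypothesis that $t_0$ and $t_i$ meet at $p$ with multiplicity $1$ translates into $\nu_{\mathfrak{p}}(t_0 - t_i) = 1$ for some prime $\mathfrak{p}$ of $F(t_i)$ above $p$. All auxiliary primes introduced by this normalization ramify in $F(t_1,\dots,t_r)/F$, hence are absorbed into the third set listed in the theorem.

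Next, invoke good reduction of the Galois cover $\tilde{X} \to \mP^1_F$. Outside the three sets listed, the cover admits a smooth $O_{F,p}$-model whose special fiber is a Galois cover of $\mP^1_{\kappa(p)}$ with the same Galois group $G$, branched exactly at the reductions of $t_1,\dots,t_r$; primes dividing $|G|$ are excluded to control wild ramification, primes where branch points collide are excluded so that these reductions remain distinct, and primes ramifying in $F(t_1,\dots,t_r)/F$ are excluded so that the branch divisor is defined over the residue field in a controlled way. The trivial center assumption is crucial at this step: by \cite[Proposition 2.3]{Beck}, it rules out the so-called vertical ramification and ensures that the specialization inertia theorem holds with no further exceptional primes, exactly as recorded in the footnote of the theorem.

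Finally, with good reduction in place, the specialization at $t_0$ corresponds to pulling back the smooth model along the section $t \mapsto t_0$. Since $\nu_{\mathfrak{p}}(t_0 - t_i) = 1$, this section meets the branch divisor transversally at $t_i$ (and no other branch point, by the exclusion of colliding primes), so the inertia at $\mathfrak{p}$ in the specialization field is conjugate to the inertia subgroup of $\tilde{X} \to \mP^1_F$ at $t_i$, namely $\langle \tau \rangle$. The expected main obstacle is purely bookkeeping: verifying that each of the three sets in the explicit description of $\mathcal{S}_0$ really accounts for one of the failure modes in \cite{Beck, Leg}, and that no additional primes (beyond the trivial-center footnote) need to be thrown in.
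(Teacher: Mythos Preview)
The paper does not give its own proof of this theorem: it is stated as a preliminary result quoted from the literature, with pointers to \cite{Beck} and \cite[\S 2.2.3]{Leg} (introduced by ``We will make use of the special case of the so-called specialization inertia theorem\ldots''). There is therefore no in-paper argument to compare your proposal against.

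Your outline is consistent with the standard good-reduction proof in those references: pass to a smooth model of the cover away from the listed bad primes, use that the branch divisor remains \'etale there, and read off the inertia of the specialization from the transversal intersection with the branch locus. One small caution on your first reduction step: moving $t_i$ to a finite rational point by a M\"obius transformation over $F$ is not generally possible since $t_i\in \mP^1(\overline{F})$; the usual argument in \cite{Beck,Leg} instead works directly over $F(t_1,\dots,t_r)$ (or via the good model) rather than through such a change of coordinates, so you should make sure your bookkeeping of extra primes really matches the three listed sets rather than introducing new ones from an auxiliary change of variable.
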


\section{Properties of decompositions that are necessary for largeness}\label{sec:nec}
\subsection{Invariant decompositions}\label{sec:inv} 
The following proposition shows that the largeness of the kernel $\Mon(f)\ra \Mon(g)$ also implies an {\it invariance} property of a proper decomposition $f=g\circ h$: 
Say that $f=g\circ h$ is {\it invariant} if for every  decomposition $f= g_1 \circ  h_1$, the maps $h$ and $h_1$ have a nontrivial common right composition factor $w$, that is,   $ h = u \circ w$ and $h_1 = v \circ w$,  for some $w$ of degree $>1$. 
For  indecomposable $h$, the invariance of $f=g\circ h$ amounts to $f=g\circ h$ being {\it right-unique}  \cite{Ros}, that is,  $h$ is right factor of $h_2$ for every proper decomposition $f=g_2\circ h_2$. Clearly, if $f=g\circ h$ is invariant, then also $g_2\circ h$ for every proper decomposition $g=g_1\circ g_2$. 

\begin{prop}\label{lem:inv-nec}    
Let $f=g\circ h$ be a proper decomposition such that 
$Z(\soc(U))=1$ for 
$U:= \Mon(h)$. If $\ker(\Mon(f)\ra \mon(g))$ is large, then $f=g\circ h$ is invariant.  
\end{prop}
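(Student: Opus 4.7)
My plan is to prove the contrapositive using the subgroup/decomposition dictionary of Lemma~\ref{lem:correspondence}. Set $G:=\Mon(f)$ with point stabilizer $H$, so the decomposition $f=g\circ h$ corresponds to a chain $H\le H_1\le G$ with $\deg h=[H_1:H]$ and $\deg g=[G:H_1]$. Any alternative proper decomposition $f=g_1\circ h_1$ corresponds to some $H\lneq H_1'\lneq G$, and a common nontrivial right factor of $h$ and $h_1$ amounts to an intermediate subgroup $H\lneq H_2\le H_1\cap H_1'$. Thus if $f=g\circ h$ fails to be invariant, I may choose $H_1'$ with $H_1\cap H_1'=H$; the $H_1$- and $H_1'$-block partitions of $G/H$ then have trivial common refinement, and in particular each $H_1'$-block meets each $H_1$-block in at most one point.

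Next, I use largeness to extract a localized element of $K:=\ker(\Mon(f)\to\Mon(g))$. With $U:=\Mon(h)$ and $J:=G/H_1$, one has $K\le U^J$ and $\soc(U)^J\le K$ by hypothesis. Pick $k\in K$ whose wreath coordinates vanish except at $H_1\in J$, with that coordinate being a nontrivial element of $\soc(U)\le U$. Since $U$ acts faithfully on $H_1/H$, this $k$ acts trivially on every block different from $H_1/H$ and sends some $zH\in H_1/H$ to a distinct $y'H\in H_1/H$. Since $z^{-1}y'\in H_1\setminus H$ and $H_1\cap H_1'=H$, we have $z^{-1}y'\notin H_1'$, so $zH$ and $y'H$ lie in different $H_1'$-blocks.

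The contradiction now comes from the $G$-action on $G/H_1'$. The element $k\in G$ must permute the $H_1'$-blocks as a whole; since $k\cdot zH=y'H$, the block $zH_1'/H$ must be sent to $y'H_1'/H$. However, every other element $zwH\in zH_1'/H$ (with $w\in H_1'\setminus H$) lies in an $H_1$-block distinct from $H_1/H$, because $zw\in H_1$ would force $w\in H_1\cap H_1'=H$; hence $k$ fixes $zwH$. This forces $zwH\in y'H_1'/H$, giving $zH_1'=y'H_1'$ and therefore $z^{-1}y'\in H_1\cap H_1'=H$, contradicting $zH\ne y'H$.

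The main obstacle is the careful translation between the geometric notion of ``common nontrivial right factor'' and the subgroup-theoretic condition $H_1\cap H_1'=H$, together with keeping track of how the localized element $k$ simultaneously interacts with two incompatible block systems. The hypothesis $Z(\soc(U))=1$ appears to play mainly a facilitating role (as in Lemma~\ref{lem:large-min-normal-B}), ensuring that minimal normal subgroup considerations behave as expected, rather than being essential to the combinatorial core above.
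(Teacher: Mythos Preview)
Your proof is correct and takes a genuinely different route from the paper's. Both arguments begin by translating non-invariance into the condition $H_1\cap H_1'=H$ (in your notation), and both exploit the fact that largeness yields elements of $K$ supported on a single block. From there, however, they diverge. The paper first passes to the subgroup $\langle H_1,H_1'\rangle$ and invokes Lemma~\ref{lem:large-min-normal-A} to retain largeness at that level; it then argues structurally, observing that the image $\overline{H_1'}$ contains $\soc(U)^{J\setminus\{J_1\}}$ but not $\soc(U)^{\{J_1\}}$, whence the conjugation action of $\overline{H_1'}$ on the copies of $\soc(U)$ must fix $J_1$, contradicting transitivity of $\langle \overline{H_1},\overline{H_1'}\rangle$ on $J$. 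Your argument is more elementary: you stay in $G$, pick a single element $k\in\soc(U)^{\{J_1\}}\le K$, and derive a direct combinatorial contradiction from the way $k$ must simultaneously permute $H_1'$-blocks while fixing every point outside the $H_1$-block $J_1$. Your approach avoids both the reduction step and the conjugation-on-copies machinery; the paper's approach, on the other hand, is phrased so as to feed directly into Remark~\ref{rem:invariance} and later structural arguments. Your closing remark that $Z(\soc(U))=1$ is not used in the combinatorial core is also accurate.
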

\begin{proof}
Suppose on the contrary that there exists another decomposition $f=\hat g_2\circ h_2$ such that $h$ and $h_2$ have no nontrivial common right composition factor. 
Let $G:=\Mon(f)$, let $\tilde X\ra\tilde X/G$ be the Galois closure of $f$, and $G_1\le G$ the stabilizer of a point $1$.
Let $G_1\leq H, H_2\leq G$ denote the stabilizers of blocks containing $1$ in the actions through $\Mon(g), \Mon(\hat g_2)$, respectively. Since $h$ and $h_2$ have no nontrivial common right composition factor, we have $H\cap H_2 = G_1$ \DN{in view of the correspondence in \S\ref{sec:imprimitive}}. 

We first replace the decompositions $f=g\circ h=\hat g_2\circ h_2$ with the decompositions $f_1:=g_1\circ h=g_2\circ h_2$, where $g_1,g_2$ correspond to the projections $\tilde X/H\ra\tilde X/\langle H,H_2\rangle$ and $\tilde X/H_2\ra\tilde X/\langle H,H_2\rangle$, respectively.  Let $\oline G:= \Mon(f_1)$, i.e., $\oline G$ is the image of $\langle H,H_2\rangle$ in the action on $\langle H,H_2\rangle/G_1$. Let $\oline H,\oline H_2,\oline G_1$ be the images in $\oline G$ of $H,H_2,G_1$, respectively. 
Then $V=\Mon(g_1)$ acts transitively on the blocks $J:=\oline G/\oline H$. Since the kernel of the projection $\oline G\ra V$ (that is, $\Mon(f_1) \ra \Mon(g_1)$) is also large by Lemma \ref{lem:large-min-normal-A}, we have  $\soc(U)^J\leq \oline G\leq U\wr_J V$. 

Since $\overline{H}$ is the stabilizer of the block $J_1\in J$ containing $1$, it also contains $\soc(U)^J$. Moreover $\soc(U)^{J\setminus\{J_1\}}$ is contained in the kernel of the action on $\oline H/\oline G_1$, and hence also in $\oline G_1$ and $\oline H_2$. On the other hand since $\oline G$ acts faithfully by definition, the core of $\oline G_1$ in $\oline G$ is trivial, and hence  
$\soc(U)^J$ and the $J_1$-th copy of $\soc(U)$ are not contained in $\oline G_1$. Since these group are contained in $\oline H$, and $\oline H_2\cap \oline H=\oline G_1$, the groups are also not contained in $\oline H_2$.  
 It follows that the action of $\oline H_2$ on $J$, via its conjugation action on the $\#J$ copies of $\soc(U)$, fixes $J_1$ (setwise). Since $\oline H_2$ and $\oline H$ both fix $J_1$, it follows that $\oline G=\langle\oline H,\oline H_2\rangle$ fixes $J_1$, contradicting the transitivity of $V$ on $J$. 
\end{proof}

\begin{remark}\label{rem:invariance}
    The proof furthermore shows that for every proper intermediate subgroup $G_1\lneq \hat H\leq G$ the intersection  $\hat H\cap H$ properly contains $G_1$. For indecomposable $H$, this amounts to the assertion that every proper intermediate subgroup $G_1\lneq \hat H\leq G$ contains $H$. In view of the correspondence in Lemma \ref{lem:correspondence}, this implies that $J$ is finer than any nontrivial partition of $G/G_1$. 
\end{remark}

Invariance is also part of the sufficient condition for having a large kernel. Its effect on kernels is explained by the following direct consequence from \cite[\S 3]{KN}: 

\begin{prop}\label{prop:carmel}
Let \JK{$V$ act transitively on $J$, and let } $G\leq U\wr_J V$ be a subgroup that projects onto $V$ with nontrivial kernel $K\leq U^J$, such that the block stabilizer of $G$ projects onto $U$, and  $\soc(U)$ is a nonabelian minimal normal subgroup of $U$. Write  $\soc(U)=L^I$ for a nonabelian simple group $L$.  Then $G$ acts transitively on a partition $P=\{P_1,\ldots,P_r\}$ of  $I\times J$ such that $K\cap L^{P_i}\cong L$, $i=1,\ldots r$,   and 
$\soc(K)$ is a minimal normal subgroup of $G$ fulfilling $\soc(K) = K\cap \soc(U)^J = (K\cap L^{P_1})\times \cdots\times (K\cap L^{P_r})$.

If furthermore $U=\Mon(h)$, $V=\Mon(g)$, and $G=\Mon(f)$ for an invariant decomposition $f=g\circ h$, 
then  $\soc(G) = \soc(K)$ is minimal normal in $G$.
\end{prop}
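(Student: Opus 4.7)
The plan is to analyze the normal subgroup $N := K \cap \soc(U)^J$ of $G$, which is normal in $G$ because $\soc(U)^J = \soc(U^J)$ is characteristic in $U^J$. As a first step, I would establish that $\pi_j(K) \supseteq \soc(U) = L^I$ for every $j \in J$: since $V$ is transitive on $J$, the projections $\pi_j(K)$ are pairwise conjugate and hence all nontrivial (using $K \neq 1$); since $K \trianglelefteq G_j$ (the $j$-th block stabilizer) and $\pi_j(G_j) = U$ by hypothesis, each $\pi_j(K)$ is normal in $U$; and $\soc(U)$, being a nonabelian minimal normal subgroup, is the unique minimal normal subgroup of $U$.

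The core of the argument is the structural decomposition $N = \prod_{i=1}^r (K \cap L^{P_i})$, with each $K \cap L^{P_i} \cong L$ a diagonal copy of $L$ indexed by a partition $P = \{P_1, \ldots, P_r\}$ of $I \times J$ on which $G$ acts transitively. This is a direct application of the normal subgroup theory of direct powers of a nonabelian simple group developed in \cite[\S 3]{KN}: the surjection $\pi_j(K) \twoheadrightarrow \soc(U)$ combined with the conjugation action of $G$ on the $|I|\cdot|J|$ simple factors of $L^{I \times J}$ forces $N$ to split as a product of diagonal simple subgroups supported on a $G$-stable partition, and the transitivity of $G$ on $P$ follows from transitivity of $V$ on $J$ together with the action of $U$ on the factors of $\soc(U) = L^I$. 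Given this decomposition, $\soc(K) = N$ via a centralizer computation: any minimal normal $M \trianglelefteq K$ with $M \cap N = 1$ commutes with $N$ and so lies in $C_{U^J}(N) = \prod_j C_U(\pi_j(N))$; but $\pi_j(N)$ is a subgroup of $\soc(U) = L^I$ that projects surjectively onto each $L$-factor (since each diagonal $K \cap L^{P_i}$ does so on its support and every position is covered by some $P_i$), so $C_{\soc(U)}(\pi_j(N)) = 1$, and the normal subgroup $C_U(\pi_j(N))$ of $U$ must be trivial by the minimality of $\soc(U)$. Minimal normality of $\soc(K)$ in $G$ then follows from the transitive $G$-action on $\{K \cap L^{P_i}\}_{i=1}^r$.

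For the invariant case, the further claim $\soc(G) = \soc(K)$ is equivalent to $C_G(\soc(K)) = 1$, since any additional minimal normal of $G$ would commute with $\soc(K)$. The previous centralizer computation gives $C_G(\soc(K)) \cap K = 1$, so any nontrivial element of $C_G(\soc(K))$ would project to a nontrivial $v \in V$ preserving each $P_i$ setwise --- the only way the $V$-action can fix the diagonals elementwise. Invariance, via Remark \ref{rem:invariance}, excludes this: such a $v$ would produce an intermediate subgroup $G_1 \lneq \hat H \leq G$ whose image via Lemma \ref{lem:correspondence} corresponds to a nontrivial partition of $G/G_1$ with $\hat H \cap H = G_1$, contradicting the defining property of an invariant decomposition. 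The main obstacle is the structural step above, establishing the precise diagonal form of $N$ inside $L^{I \times J}$, which is the combinatorial heart of \cite[\S 3]{KN}; the concluding invariance argument also requires careful bookkeeping between the partition $P$, the intermediate subgroup lattice of $G$, and the $V$-action on $I \times J$.
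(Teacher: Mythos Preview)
Your treatment of the first assertion is essentially the paper's: both reduce to the structure theory of normal subgroups of $L^{I\times J}$ developed in \cite[\S 3]{KN}, and the paper simply cites \cite[Lemma 3.1 and Corollary 3.4]{KN} where you spell out the centralizer computation showing $C_{U^J}(N)=1$ and hence $\soc(K)=N$. That elaboration is correct and harmless.

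The gap is in the ``furthermore'' part. Your reduction to $C_G(\soc(K))=1$ is fine, and the argument that $C_G(\soc(K))\cap K=1$ is correct. But the final step does not go through as written. You invoke Remark \ref{rem:invariance}, which is a consequence of the proof of Proposition \ref{lem:inv-nec}; that proposition, and hence the remark, assume the kernel is \emph{large}, an hypothesis you do not have here (you only know $K\ne 1$). Moreover, the observation that a nontrivial element of $C_G(\soc(K))$ fixes each $P_i$ setwise concerns the conjugation action of $G$ on $I\times J$, not the original action on $G/G_1$; you never explain how this produces an intermediate subgroup $\hat H$ with $\hat H\cap G_0=G_1$, and Lemma \ref{lem:correspondence} by itself does not bridge these two actions. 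The paper closes this gap differently: it takes a second minimal normal subgroup $N\lhd G$, sets $\hat H=G_1N$, and appeals to \cite[Lemma 3.5]{KN} to obtain $G_1N\cap G_0=G_1$ directly; since $\core_G(G_1N)\supseteq N\neq 1$, this yields a proper decomposition with no common right factor with $h$, contradicting invariance. Your setup is compatible with this (any such $N$ lies in $C_G(\soc(K))$), but the missing ingredient is precisely the content of \cite[Lemma 3.5]{KN}, not Remark \ref{rem:invariance}.
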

\begin{proof}
The first assertion is \cite[Lemma 3.1]{KN}, together with \cite[Corollary 3.4]{KN} for the minimality of $\soc(K)$. For the ``furthermore" assertion, let $G_1$ be a point stabilizer and  $G_0$ the stabilizer of the corresponding block, so that $G_1\leq G_0$. 
Let $\tilde f:\tilde X\ra Y$ be the Galois closure of $f$, and assume on the contrary there is another minimal normal subgroup $1\neq N\lhd G$. Then \cite[Lemma 3.5]{KN} yields a subgroup $G_1N\supsetneq G_1$ whose intersection with $G_0$ is $G_1$. 
Identifying $X\cong \tilde X/G_1$, we see that $f$ factors as $g_1\circ g_2$, where $g_2$ is the natural projection $\tilde X/G_1\ra \tilde X/G_1N$. 
Note that since $\core_G(G_1N)\supseteq N$, $f=g_1\circ g_2$ is a proper decomposition. 
Since the intersection of $G_1N$ and $G_0$ is $G_1$, it follows that $g_2$ and $h$ have no nontrivial common right composition factor, contradicting the invariance of the decomposition.  
\end{proof}

\begin{remark}
    \label{rem:burnside}
For indecomposable $h\in F[x]$, $d:=\deg h\geq 5$, that is not linearly related over $\oline F$ to $x^d$ or $T_d$,  $\Mon(h)$ is a nonabelian almost simple group by theorems of Schur and Burnside, see e.g.\ Theorem \cite[Theorem 2.1]{KN} or \cite{Mul}. Thus, in Proposition \ref{prop:carmel}, $\#I=1$  and the proposition yields a partition  of $J$ itself. 
\end{remark}

\subsection{\JK{Nondiagonality}}\label{sec:diagonal}

When the decomposition $f=g\circ h$ is invariant and Galois-proper, the following lemma describes   $\mon(f)$ when the kernel is diagonal:
\begin{lemma}\label{lem:diag}
Let $f=g\circ h$ be a Galois-proper invariant decomposition  with monodromy groups $G:=\mon(f)\leq U\wr V$, $U:=\mon(h)$, and $V:=\Mon(g)$ of degree $m$. Assume that $\soc(U)$ is a nonabelian minimal normal subgroup of $U$, \JK{ and let $K:=\ker(G\ra V)$}. 
If $\soc(K)\leq U^m$ is a diagonal subgroup,
then  $G$ embeds into $\Aut(\soc(U))$. 
\end{lemma}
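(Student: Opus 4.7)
The plan is to first establish that $G$ acts faithfully on $\soc(K)$ by conjugation, and then to transport this embedding into $\Aut(\soc(U))$ using diagonality. For the faithful action, Proposition \ref{prop:carmel} applies (by invariance of the decomposition, Galois-properness, and $\soc(U)$ being a nonabelian minimal normal subgroup of $U$) and yields that $\soc(K)=\soc(G)$ is itself a minimal normal subgroup of $G$, hence the unique one. Since $\soc(K)\leq \soc(U)^J$ is a direct product of nonabelian simple groups (using $Z(\soc(U))=1$), it is centerless; therefore $C_G(\soc(K))$ meets $\soc(K)$ trivially and cannot contain any minimal normal subgroup of $G$ (the only candidate being $\soc(K)$), forcing $C_G(\soc(K))=1$. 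Conjugation then gives $G\hookrightarrow \Aut(\soc(K))$.

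For the transport, fix $j_0\in J$. Diagonality means $\pi_{j_0}\colon \soc(K)\to U$ is injective, with image $D\leq \soc(U)=L^I$. Setting $\sigma_j:=\pi_j\circ\pi_{j_0}^{-1}\colon D\to \pi_j(\soc(K))$, a direct computation for $g=(u_j)_j v\in G\leq U\wr V$ (using that $g$ normalizes $\soc(K)$) shows that the induced action on $D$ is given by $\Phi(g)\colon d\mapsto u_{j_0}\,\sigma_{v^{-1}(j_0)}(d)\,u_{j_0}^{-1}$, and the kernel of $\Phi\colon G\to \Aut(D)$ is identified with $C_G(\soc(K))=1$.

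The remaining main task is to realize each $\Phi(g)$ as the restriction of an element of $\Aut(\soc(U))$. The inner part (conjugation by $u_{j_0}$) already lives in $\Aut(\soc(U))$ since $\soc(U)\trianglelefteq U$. For the coordinate-change $\sigma_{v^{-1}(j_0)}$, I would invoke the decomposition $\soc(K)=\prod_i(K\cap L^{P_i})$ from Proposition \ref{prop:carmel}, in which each $K\cap L^{P_i}$ is a full-diagonal copy of $L$ in $L^{P_i}$. This presents each $\sigma_j$ as a direct sum of full-diagonal-to-full-diagonal isomorphisms between subgroups of $L^{B_{i,j_0}}$ and $L^{B_{i,j}}$, where $B_{i,j}:=P_i\cap (I\times\{j\})$; by $V$-transitivity on $J$ together with $G$-invariance of the partition $P$, the multisets $\{|B_{i,j_0}|\}_i$ and $\{|B_{i,j}|\}_i$ coincide, so $\sigma_j$ extends to an element $\widetilde{\sigma}_j\in \Aut(L)\wr \Sym(I)=\Aut(\soc(U))$ that permutes the $L$-factors of $\soc(U)$ by matching blocks of equal size. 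Setting $\widetilde{\Phi}(g):=\mathrm{Ad}(u_{j_0})\circ \widetilde{\sigma}_{v^{-1}(j_0)}$ recovers $\Phi(g)$ upon restriction to $D$ and thus provides the desired map $G\to \Aut(\soc(U))$. I expect the main technical obstacle to be a coherent choice of the extensions $\widetilde{\sigma}_j$ ensuring that $g\mapsto \widetilde{\Phi}(g)$ is a group homomorphism (rather than just a pointwise compatible assignment), which should follow from the rigidity of diagonal subgroups inside products of nonabelian simple groups; injectivity of the resulting map then follows from that of $\Phi$.
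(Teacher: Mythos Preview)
Your first paragraph is correct and essentially identical to the paper's argument: invoke Proposition~\ref{prop:carmel} (using Galois-properness and invariance) to get that $\soc(K)=\soc(G)$ is the unique minimal normal subgroup of $G$, observe it is centerless, conclude $C_G(\soc(K))=1$, and hence obtain an injective homomorphism $G\hookrightarrow \Aut(\soc(K))$ by conjugation.

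The divergence is in the ``transport'' step, and here you make things much harder than necessary and leave a genuine gap. You try to lift each individual $\Phi(g)\in\Aut(D)$ to some $\widetilde{\Phi}(g)\in\Aut(\soc(U))$ via noncanonical extensions $\widetilde{\sigma}_j$, and you correctly identify that there is no reason the resulting assignment $g\mapsto\widetilde{\Phi}(g)$ should be a group homomorphism. Your appeal to ``rigidity of diagonal subgroups'' is not a proof, and in general the extension of an automorphism of a subgroup to the ambient group is highly nonunique, so coherence genuinely fails without more work.

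The paper sidesteps this entirely. All that is needed is an \emph{abstract} embedding $\Aut(\soc(K))\hookrightarrow\Aut(\soc(U))$, which one then composes with the already-established $G\hookrightarrow\Aut(\soc(K))$. By Proposition~\ref{prop:carmel}, $\soc(K)\cong L^r$ for some $r$, while $\soc(U)=L^{I}$; diagonality gives an injection $\soc(K)\hookrightarrow\soc(U)$ via a coordinate projection, so $r\le |I|$ and hence $\Aut(L^r)=\Aut(L)\wr S_r$ embeds into $\Aut(L)\wr S_{|I|}=\Aut(\soc(U))$. (In fact the coordinate projection is also surjective---its image is a nontrivial subgroup of $\soc(U)$ normalized by the block stabilizer, which surjects onto $U$, and $\soc(U)$ is minimal normal in $U$---so $\soc(K)\cong\soc(U)$ outright, which is what the paper's one-line ``identify $\soc(K)$ with $\soc(U)$'' means.) Either way, no element-by-element extension is needed, and the coherence problem simply does not arise.
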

\begin{proof}
First, let $\rho:G\ra \soc(K)$ denote the conjugation action and  $C:=\ker \rho$ its kernel which is no other than the centralizer $C_G(\soc(K))$ of $\soc(K)$. Note that  $K\neq 1$ since $f=g\circ h$ is Galois proper,  and hence $\soc(K)\neq 1$. 

Since  $\soc(U)$ is a nonabelian minimal normal subgroup of $U$, it is a power of a nonabelian simple group $L$. Since in addition $K\neq 1$,    Proposition \ref{prop:carmel} implies that  $\soc(K)$ is also a power of $L$. Thus, these groups have a trivial center and hence $C\cap \soc(K)=1$. Moreover, since $f=g\circ h$ is invariant, Proposition \ref{prop:carmel} implies that $\soc(K)$ is the unique minimal normal subgroup of $G$, and hence $C=1$ and $\rho$ is injective. 

Since $\soc(K)$ is diagonal, we may identify $\soc(K)$ with $\soc(U)$ via projection to a coordinate. Thus  $\rho$ yields  an embedding  
$G\ra \Aut(\soc(K))=\Aut(\soc(U)).$
\end{proof}
Note that the embedding $\hat\rho:G\ra\Aut(\soc(U))$ is given explicitly by  combining the conjugation action $\rho:G\ra \Aut(\soc(K))$ with the identification $\soc(K)\ra\soc(U)$.

Conversely, the existence of such an embedding yields diagonality:
\begin{lemma}
\label{lem:diag-nec}
Let $f=g\circ h$ be a proper decomposition, and assume  $\soc(U)$ is a nonabelian minimal normal subgroup of $U:=\Mon(h)$. Let $K:=\ker(\Mon(f)\ra \Mon(g))$. If there exists an embedding $\hat \rho:G\ra\Aut(\soc(U))$, then $\soc(K)$ is a diagonal subgroup.
\end{lemma}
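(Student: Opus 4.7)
The plan is to assume $K \neq 1$ (else the claim is vacuous, since $\soc(K)=1$ is trivially diagonal) and use Proposition \ref{prop:carmel} to express $\soc(K) = \prod_{i=1}^r L_i$, where each $L_i := K\cap L^{P_i}$ is a full diagonal copy of $L$ in $L^{P_i}$ for a partition $P=\{P_1,\ldots,P_r\}$ of $I\times J$. Diagonality of $\soc(K)$ as a subgroup of $U^J$ is equivalent to the combinatorial statement that every part $P_i$ meets every column $I\times\{j\}$ in exactly one element, and proving this is my target.

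The main step is to show, for every $j\in J$, that the coordinate projection $p_j\colon\soc(K)\to U$ has image equal to $\soc(U)$. The image $M_j$ lies in $\soc(U)$ by construction, and a direct calculation using that the block stabilizer $G_j$ projects onto $U$ on coordinate $j$ (built into the wreath embedding $G\le U\wr_J V$) together with $\soc(K)\trianglelefteq G$ shows that $M_j$ is invariant under $U$-conjugation. Minimality of $\soc(U)$ as a normal subgroup of $U$ then forces $M_j\in\{1,\soc(U)\}$; the case $M_j=1$ at some $j$ would propagate via $G$-transitivity on $J$ to all coordinates, yielding $\soc(K)=1$ and contradicting $K\neq 1$ combined with Proposition \ref{prop:carmel}.

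Given surjectivity, the image $p_j(\soc(K))$ decomposes as a direct product of diagonal copies of $L$, indexed by those $i$ with $P_i\cap(I\times\{j\})\neq\emptyset$ and supported on the pairwise disjoint subsets $P_i\cap(I\times\{j\})\subseteq I$. A size comparison with $\soc(U) = L^{|I|}$ forces the number $n_j$ of such $i$ to equal $|I|$ and each $P_i\cap(I\times\{j\})$ to be a singleton. Finally, $\hat\rho$ together with the solvability of $\Out(L)$ (Schreier's conjecture) pushes $\hat\rho(\soc(K))$ inside $\soc(U)$, whence $r\le|I|$ by order comparison; combined with the obvious $n_j\le r$ and the equality $n_j=|I|$ from above, this yields $r=|I|$, so every part $P_i$ meets every column in exactly one point. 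The main obstacle I foresee is the surjectivity of $p_j$ onto $\soc(U)$, which simultaneously uses the minimality of $\soc(U)$, the wreath-product surjectivity of block stabilizers, and $G$-transitivity on $J$; the remaining steps are essentially order bookkeeping.
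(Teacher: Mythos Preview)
Your steps 1--5 are essentially correct and closely parallel the paper's argument: both first establish that each coordinate projection $p_j\colon \soc(K)\to \soc(U)$ is surjective (your step 4 spells out what the paper compresses into one sentence), and both then seek to combine this with the inequality $r\le |I|$ to deduce injectivity. The substantive content of the lemma lies precisely in that inequality, which is your step 6.

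Here, however, there is a genuine gap. You assert that Schreier's conjecture forces $\hat\rho(\soc(K))\subseteq \soc(U)=L^{|I|}$. This is valid when $|I|=1$, since then $\Aut(\soc(U))/\soc(U)=\Out(L)$ is solvable and the perfect group $\soc(K)$ must land in $L$. But for $|I|>1$ one has $\Aut(L^{|I|})\cong \Aut(L)\wr S_{|I|}$, and the quotient $\Aut(L^{|I|})/L^{|I|}\cong \Out(L)\wr S_{|I|}$ is \emph{not} solvable in general. A simple factor $L_j$ of $\soc(K)$ may therefore map nontrivially into the symmetric factor $S_{|I|}$ (for instance, take $L=A_5$ and $|I|\ge 5$); Schreier alone does not exclude this, and your order comparison then collapses.

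The paper's proof confronts this possibility directly. It separates the factors $L_j$ of $\soc(K)$ into those with $\hat\rho(L_j)\subseteq \Aut(L)^{|I|}$---hence, by Schreier, $\subseteq L^{|I|}$---whose supports in $\{1,\ldots,|I|\}$ are pairwise disjoint because the $L_j$ commute, and the remaining factors, whose product acts faithfully by permutations on the $|I|$ components. For the latter, an elementary abelian-subgroup/orbit-counting estimate bounds their number in terms of $|I|$ minus the number of orbits; combining this with the observation that each support of the former type must be a union of such orbits (again because the two types commute) yields $r\le |I|$. This combinatorial bookkeeping is exactly what your step 6 is missing.
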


\begin{proof}
Since $\soc(U)$ is a nonabelian minimal normal subgroup, it is of the form  $L^r$, $r\geq 1$ for a nonabelian simple group $L$.
Since we may in addition assume $K\neq 1$ and hence $\soc(K)\neq 1$,  Proposition \ref{prop:carmel} implies that $\soc(K)$ is a direct power of $L$, say $\soc(K)\cong L_1\times \dots\times L_m$ with some $m\in \mathbb{N}$ and $L_i\cong L$ for all $1\le i\le m$. 

We claim that $m\le r$. Since  $\soc(K)\neq 1$, the component projections $\soc(K)\ra \soc(U)$ are nontrivial 
and hence their images contain the minimal normal subgroup $\soc(U)= L^r$ of $U$, so that the claim yields
$m=r$ and that the component projections are injective, as required.  

To show the claim, consider the embedding $\hat\rho: \soc(K)\to \Aut(\soc(U))$. Note that $\Aut(\soc(U))=\Aut(L^r)$ acts on the $r$ components of $L^r$, with kernel $N=Aut(L)^r$. Let $J$ be the set of indices $j\in \{1,\dots, m\}$ for which $\hat\rho(L_j)\subseteq N$. Since $\Out(L)$ is solvable by Schreier's conjecture, one then even has $\hat\rho(L_j)\subseteq L^r$. Now, for each $j\in J$, let $I_j\subseteq\{1,\dots, r\}$ be the set of components onto which $\hat\rho(L_j)$ projects nontrivially (and thus, automatically surjects, since $L$ is simple). Since the $L_j$, $j\in J$ commute pairwise, the sets $I_j$, $j\in J$ must be pairwise disjoint.  

Next, note that $\hat\rho(\prod_{i\in \{1,\dots, m\}\setminus J} L_i)$ acts faithfully on the components by the definition of $J$ \DN{and since $L$ is nonabelian simple}. This image is a subgroup of $S_r$, and contains an abelian subgroup $A$ of order at least $2^{m-\#J}$. Let $O_1,\dots, O_k\subseteq \{1,\dots, r\}$ be the orbits of $A$. Since $A$ is abelian, the faithfulness of the action enforces $\prod_{i=1}^k\# O_{\DN{i}} \ge |A|\ge 2^{m-\#J}$. Taking a logarithm, one obtains $\sum_{i=1}^k (\#O_i-1) =r-k \ge m-\#J$. 
But on the other hand, $\prod_{i\in \{1,\dots, m\}\setminus J} L_i$ commutes with each $L_j$, $j\in J$, which forces its image to fix each $I_j$ setwise. Thus, each of the disjoint sets $I_j$ is a union of certain orbits $O_i$, $1\le i\le k$, and in particular $\#J\le k$. The combination of the above inequalities yields $m\le r-k+\#J\le r$, showing the claim. 
\end{proof}

\begin{remark}\label{remark:polynomial monodromy do not embeed into automorphism group}
The condition that $G=\Mon(g\circ h)$ does not embed into $Aut(\soc(U))$ can also often be easily verified very conveniently regardless of diagonality considerations as above. Notably, if $\soc(U)$ is nonabelian simple for $U=\Mon(h)$, and $\Mon(g)$ is nonsolvable, then such an embedding is impossible since $\Out(\soc(U))$ is solvable by Schreier's conjecture. 

For polynomial maps $h\in F[x]$, the last condition on $U$ is satisfied as in Remark \ref{rem:burnside}, as long as  $h$ is not linearly related over $\bar{F}$ to $x^d$ or $T_d$, and $\deg h\ge 5$. If $g$ is also a polynomial map of degree $>1$, then the extra condition on $\Mon(g)$ can even be dropped. Indeed, an embedding of $\Mon(g\circ h)$ into $\Aut(\soc(\Mon(h)))$ would yield a cyclic transitive subgroup in $\Mon(h)$ (the inertia group at infinity) extended by a cyclic subgroup of strictly larger order in $\Aut(\soc(\Mon(h))$. This, however, does not happen as a consequence of the classification of finite simple groups, see \cite[Theorem 1.2(2)]{LiPr}.\end{remark}
%

\subsection{Conjugation compatibility}\label{sec:conj} The following lemma describes a property which  is necessary for the largeness of kernels by Lemma \ref{lem:full}. 
\begin{lemma}\label{lem:partition}
Suppose $G$ acts on $S$ transitively with two nontrivial partitions $P$ and $Q$.
Let $G_p,G_q$ denote stabilizers of blocks $p\in P$, $q\in Q$, resp., 
and $\pi:S\ra Q$ the  natural map. 
Then the following conditions are equivalent:
\begin{enumerate}
\item The sets $\pi(p)$, $p\in P$ form a  partition of $Q$;
\item The following is an equivalence relation on $S$:  \\
\centerline{$x\sim y$ if $\pi(x)$ and $\pi(y)$ belong to $\pi(p)$ for some $p\in P$;}
\item $G_pG_q=G_qG_p$ for every $p\in P$, $q\in Q$ with $p\cap q\neq\emptyset$;
\item $G_p\cdot G_q$ is a group for every  $p\in P$, $q\in Q$ with $p\cap q\neq \emptyset$.
\end{enumerate}
\end{lemma}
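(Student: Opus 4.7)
The plan is to give a cyclic proof of the equivalences, pivoting on an explicit description of the coset product $G_pG_q$ as a subset of $S$. Fix a point $x\in p\cap q$, so that $G_x\le G_p\cap G_q$. Since $\pi$ is $G$-equivariant and $G_p$ acts transitively on $p$ with $\pi(x)=q$, the image $\pi(p)$ coincides with the $G_p$-orbit of $q$ in $Q$, and hence
\[
G_pG_q\cdot x \;=\; G_p\cdot q \;=\; \bigsqcup_{q'\in\pi(p)} q' \;\subseteq\; S.
\]
The equivalence $(3)\Leftrightarrow(4)$ is the classical fact that a product $HK$ of two subgroups is itself a subgroup precisely when $HK=KH$, and requires no input from the rest of the setup.

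Next, I would establish $(1)\Leftrightarrow(4)$ by going through Lemma~\ref{lem:correspondence}. Assuming $(4)$, so that $G_pG_q$ is a subgroup, the set $G_pG_q\cdot x$ is a block of some $G$-invariant partition of $S$; its $G$-translates are the sets $\bigsqcup_{q'\in\pi(p')} q'$ for $p'\in P$, and they partition $S$, which forces the $\pi(p')$'s to partition $Q$, giving $(1)$. Conversely, assuming $(1)$, the sets $R_{p'}:=\bigsqcup_{q'\in\pi(p')} q'$ form a $G$-invariant partition of $S$ with the block $R_p$ containing $x$ satisfying $R_p=G_pG_q\cdot x$; by Lemma~\ref{lem:correspondence}, $R_p=H\cdot x$ for some $G_x\le H\le G$, and the orbit-stabilizer identity $|H|=|R_p|\cdot|G_x|=|G_pG_q|$ together with the inclusion $G_pG_q\subseteq H$ forces $H=G_pG_q$, a subgroup.

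Finally, for $(1)\Leftrightarrow(2)$: the direction $(1)\Rightarrow(2)$ is immediate, since if $x\sim y$ via $p_1$ and $y\sim z$ via $p_2$, then $\pi(y)\in\pi(p_1)\cap\pi(p_2)$ together with $(1)$ forces $\pi(p_1)=\pi(p_2)$, and $x\sim z$ follows via $p_1$. For $(2)\Rightarrow(1)$, the equivalence classes of $\sim$ form a $G$-invariant partition of $S$, and one identifies the class of $x$ with $R_p=\bigsqcup_{q'\in\pi(p)} q'$; the resulting partition of $S$ into the $G$-translates $R_{p'}$ then passes back to a partition of $Q$ by the $\pi(p')$'s, establishing $(1)$. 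The main obstacle I anticipate is precisely this last direction $(2)\Rightarrow(1)$, where the identification $[x]_\sim = R_p$ requires a delicate use of both the $G$-invariance of $\sim$ and Lemma~\ref{lem:correspondence}: the inclusion $R_p\subseteq [x]_\sim$ is immediate from the definition, but the reverse inclusion must rule out the $\sim$-class of $x$ being properly larger than $R_p$, which is the step where the combinatorial structure of the $\pi(p')$'s interacts most subtly with the group action.
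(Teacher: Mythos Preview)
Your argument for $(1)\Leftrightarrow(4)$ is correct and takes a different route from the paper. The paper proves $(1)\Leftrightarrow(3)$ directly by a coset computation: identifying $Q$ with $G/G_q$, one has $\pi(p')=xG_pG_q/G_q$ whenever $p'=xp$ and $p\cap q\ne\emptyset$, and since $G_q$ acts transitively on the $P$-blocks meeting $q$, condition $(1)$ reduces to $xG_pG_q=G_pG_q$ for all $x\in G_q$, i.e., $G_qG_p\subseteq G_pG_q$. Your approach via the block--subgroup correspondence and the cardinality identity $|G_pG_q|=|R_p|\cdot|G_x|=|H|$ is equally valid; just note that the inclusion $G_pG_q\subseteq H$ and that identity deserve a line of justification each (both follow because $G_x\le G_q$ and because $G_p,G_q$ stabilize the block $R_p$).

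You are right to flag $(2)\Rightarrow(1)$ as the obstacle --- in fact this implication is \emph{false} as stated, so no argument will close the gap. Take $G=S_3$ in its regular action, $P$ the left cosets of $\langle(1\,2)\rangle$, and $Q$ the left cosets of $\langle(1\,3)\rangle$. Each $\pi(p)$ is then a $2$-subset of the $3$-element set $Q$, and the three $P$-blocks give all three $2$-subsets; thus $(1)$ fails (and indeed $|G_pG_q|=4$, so $(4)$ fails too). But every pair of elements of $Q$ lies in some $\pi(p)$, so $\sim$ is the full relation on $S$, trivially an equivalence relation, and $(2)$ holds. The paper dismisses $(1)\Leftrightarrow(2)$ as ``immediate from the definitions'', which overlooks this direction; fortunately only the equivalences $(1)\Leftrightarrow(3)\Leftrightarrow(4)$ are used elsewhere.
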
 
\begin{proof}
The equivalence  (1)$\Leftrightarrow$(2) is immediate from the definitions of $\pi$ and the relation. 
The equivalence of (3)$\Leftrightarrow$(4) is a well known elementary fact in group theory. 
To show the equivalence  (1)$\Leftrightarrow$(3), fix $q\in Q$ and note that the action of $G$ on $Q$ is equivalent to its action on $G/G_q$ via the mapping $x\cdot q\mapsto xG_q$, $x\in G$. Thus, the equality $\pi(p)=\pi(p')$ is equivalent to $xG_pG_q=G_pG_q$ for  $x\in G$ such that $xp=p'$. 

Since $G_q$ is transitive on $q$, for every $p,p'\in P$ which intersect $q$ nontrivially there is an $x\in G_q$ such that $xp=p'$. Thus, Condition (1) is equivalent to the condition: 
\begin{equation}\label{eq:coset-cond}
\tag{1'}
    xG_pG_q=G_pG_q\text{ for every $x\in G_q$, $p\in P$,$q\in Q$ with $p\cap q\neq\emptyset.$}
\end{equation}
The condition clearly holds if (3) holds. Conversely, if \eqref{eq:coset-cond} holds, then $xy\in G_pG_q$ for every $x\in G_q,y\in G_p$ so that $G_qG_p\subseteq G_pG_q$. Since $G$ is finite this implies equality. 
\end{proof}
To assert that the equivalent conditions of Lemma \ref{lem:partition} hold, we shall say for short that the partitions $P$ and $Q$ are {\it compatible}. 
The necessary condition for largeness is: 
\begin{defn}
Let  $f=g\circ h$ be a decomposition such $\soc(U)=L^I$ is a nonabelian minimal normal subgroup of $U:=\Mon(h)$, and $V:=\mon(g)$ acts on $J$, so that $G=\Mon(f)\leq U\wr_J V$ acts on $I\times J$ via conjugation. Let $P$ be the partition of $I\times J$ from Proposition \ref{prop:carmel}, and $P_J$ the partition of $I\times J$ induced by $J$. 
We say that $f=g\circ h$ is a {\it conjugation-compatible decomposition} if $P$ and $P_J$ are compatible. 
\end{defn}

Clearly if one of the partitions $P_J$ or $P$ is a trivial partition then $P$ and $J$ are compatible and the decomposition is conjugation compatible. Thus, if the kernel in Proposition \ref{prop:carmel} contains $L^{I\times J}$, the decomposition is conjugation compatible:
\begin{lemma}\label{lem:full}
Suppose $f=g\circ h$ so that $G=\Mon(f)$ is a subgroup of $U\wr_J V$, where $U=\Mon(h)$, and $V=\Mon(g)$ acts faithfully on $J$. Suppose $\soc(U)=L^I$ is a nonabelian minimal normal subgroup of $U$ and that the kernel of the projection $G\ra V$ is large.  
Then $f=g\circ h$ is a  conjugation-compatible decomposition. 
\end{lemma}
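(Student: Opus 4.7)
The plan is to invoke Proposition~\ref{prop:carmel} to extract the partition $P$ of $I\times J$, then observe that the largeness hypothesis collapses $P$ to the discrete partition on $I\times J$, at which point compatibility with $P_J$ is automatic.

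First, I would check that the hypotheses of Proposition~\ref{prop:carmel} hold. The group $V=\Mon(g)$ acts transitively on $J$ (faithfully, by assumption); the block stabilizer of $G$ projects onto $U$ by construction of the wreath embedding $G\leq U\wr_J V$; the kernel $K$ is nontrivial since $K\supseteq\soc(U)^J\neq 1$ by largeness; and $\soc(U)=L^I$ is assumed to be a nonabelian minimal normal subgroup of $U$. Proposition~\ref{prop:carmel} then yields a $G$-invariant partition $P=\{P_1,\ldots,P_r\}$ of $I\times J$ such that
\[
K\cap L^{P_i}\cong L \quad (i=1,\ldots,r),\qquad \soc(K)=K\cap L^{I\times J}=\prod_{i=1}^r\bigl(K\cap L^{P_i}\bigr),
\]
where $L^{P_i}\leq L^{I\times J}=\soc(U)^J$ denotes the subgroup supported on the coordinates indexed by $P_i$.

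The crux of the argument is a short cardinality computation. Largeness means $K\supseteq L^{I\times J}$, whence $K\cap L^{P_i}=L^{P_i}\cong L^{|P_i|}$. Since Proposition~\ref{prop:carmel} asserts this intersection is isomorphic to the nonabelian simple group $L$, we conclude $|P_i|=1$ for every $i$. In other words, $P$ is the discrete partition of $I\times J$, whose blocks are the singletons $\{(i,j)\}$.

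With $P$ discrete, compatibility with $P_J$ is immediate. Using condition~(1) of Lemma~\ref{lem:partition}, the natural map $\pi\colon I\times J\to J$ sends each block $p=\{(i,j)\}\in P$ to the singleton $\{j\}\subseteq J$, and the distinct images $\{j\}$, $j\in J$, evidently partition $J$. Thus $P$ and $P_J$ are compatible, i.e.\ $f=g\circ h$ is conjugation-compatible. The entire proof is essentially a one-line consequence of Proposition~\ref{prop:carmel}, so I do not expect any serious obstacle; the only subtlety is the cardinality step, which turns the abstract isomorphism $K\cap L^{P_i}\cong L$ into the concrete equality $|P_i|=1$ by exploiting that under largeness the intersection is the full $L^{P_i}$ rather than a proper diagonal subgroup.
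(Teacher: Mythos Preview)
Your proof is correct and follows essentially the same approach as the paper: both argue that largeness forces the partition $P$ from Proposition~\ref{prop:carmel} to be the discrete partition of $I\times J$, after which compatibility with $P_J$ is immediate. You simply spell out the cardinality step $K\cap L^{P_i}=L^{P_i}\cong L\Rightarrow |P_i|=1$ that the paper leaves implicit.
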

\begin{proof}
    Since $G$ contains $L^{I\times J}$, the corresponding partition $P$ in Proposition \ref{prop:carmel} is the trivial  partition of $I\times J$ into  sets $\{(i,j)\}$, $i\in I,j\in J$. 
    In particular, $P$ and the partition $P_J$ induced by $J$ are compatible and $g\circ h$ is conjugation compatible. 
\end{proof}


The following is an example of a decomposition that is not conjugation compatible:
\begin{example}
Let $\tilde f:\tilde X\ra X_0$ be a Galois  map  with group $G=A_n^3\rtimes S_3=\prod_{i=1}^3A_n^{\{i\}}\rtimes S_3$ for $n\geq 5$. We shall produce a decomposable map $f=g\circ h$ with Galois closure $
\tilde{f}$, for natural projections $g:\tilde X/G_0\ra X_0$ and $h:\tilde X/G_1\ra \tilde X/G_0$ for certain subgroups $G_1\leq G_0\leq G$, and show it is not conjugation compatible. 

To produce $G_0,G_1$, let $S_3$ act on the set $J$ of $2$-subsets of $\{1,2,3\}$. Let $G_0=A_n^J\rtimes \langle s\rangle$, for $s=(1,2)$, be a block stabilizer (of $\{1,2\}\in J$), and let $g:\tilde X/G_0\ra X_0$  be the corresponding projection. Consider the product action of $G_0$ on $\{1,\ldots,n\}^{\{1,2\}}$, where $s$ acts by permuting the two coordinates and $A_n^{\{1\}}\times A_n^{\{2\}}$ acts pointwise. The kernel of this action is the third copy $A_n^{\{3\}}$  and its image is the primitive wreath product $U=A_n\wr \langle s\rangle$ with socle $A_n^I$, $I:=\{+,-\}$. Let $G_1\leq G_0$ be a point stabilizer in this action of $G_0$ and $h:\tilde X/G_1\ra\tilde X/G_0$ the corresponding projection. 

The resulting map $f=g\circ h$ has Galois closure $\tilde f$ inducing an injection $G\ra U\wr S_J$, as in the beginning of the section. The conjugation action on $\soc(U)^J=A_n^{I\times J}$ induces an action of $G$ on $I\times J$ with block kernel $K\cong A_n^3$. This action factors through the projection $G\ra S_3$ which acts faithfully on $I\times J$. Since in addition $\#(I\times J)=6$, $S_3$ acts regularly on $I\times J$. In this action, the stabilizer of a block $I\times \{j\}$, \DN{$j:=\{1,2\}$}, in the partition $P_J$ corresponding to $J$ is $K\langle (1,2)\rangle$. On the other hand, the partition $P$ of $I\times J$ given by Proposition \ref{prop:carmel} has  $3$ blocks and  of cardinality $2$. Thus, the stabilizer $H\leq S_3$ of a block in $P$, say of the block containing $(+,j)$   is a transposition $\sigma$. We claim that $\sigma\neq (1,2)$ and hence $\langle \sigma\rangle \cdot \langle(1,2)\rangle$ is not a group, so that $P$ and  $P_J$ are not compatible by Lemma \ref{lem:partition} and hence $f=g\circ h$ is not conjugation compatible. 

It remains to show the last claim: If on the contrary $\sigma=(1,2)$ then, by definition of $P$, the first copy $A_n^{\{1\}}\leq A_n^3$ is mapped to a diagonal subgroup 
of $A_n^{I\times\{j\}}\leq A_n^{I\times J}$ which is not normal in $\soc(U)=A_n^I$. However, the projection of $\soc(G)=A_n^3$ to the $(I\times\{j\})$-th block in $A_n^{I\times J}$ is all of $\soc(U)=A_n^I$ by construction. This contradicts the normality of $A_n^{\{1\}}$ in $A_n^3$, and proves the claim. 
\end{example}

\section{Main theorem and consequences}
\label{sec:main}
As before fix $F$ to be a field of characteristic $0$, and assume all maps $f:X\ra X_0$ are defined over $F$. Our main theorem is: 
\begin{theorem}\label{thm:main}
Let $f=g\circ h$ be an invariant conjugation-compatible decomposition. Assume $\soc(U)$  is a nonabelian minimal normal subgroup of $U:=\Mon(h)$. 
For every decomposition $g=g_1\circ g_2$,  $\deg g_2>1$, assume 
$g_2\circ h$ is Galois-proper and $\Mon(g_2\circ h)$ does not embed in $\Aut(\soc(U))$. 
Then the kernel of $\mon(f)\ra \mon(g)$ is large. 
\end{theorem}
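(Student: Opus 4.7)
The plan is to argue by contradiction. Suppose $K:=\ker(\mon(f)\to \mon(g))$ is not large, i.e., fails to contain $\soc(U)^J=L^{I\times J}$ where $\soc(U)=L^I$ for a non-abelian simple group $L$. Invariance of $f=g\circ h$ permits application of Proposition~\ref{prop:carmel}, producing a $G$-invariant partition $P=\{P_1,\ldots,P_r\}$ of $I\times J$ on which $G:=\mon(f)$ acts transitively, with $\soc(G)=\soc(K)=\prod_i(K\cap L^{P_i})$ and each $K\cap L^{P_i}\cong L$. Non-largeness of $K$ corresponds to $P$ being distinct from the partition into singletons.

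My strategy is to pin down the shape of the blocks $P_i$ using the invariance and conjugation-compatibility hypotheses, and then obstruct each resulting possibility via Lemma~\ref{lem:diag} applied to a suitable sub-decomposition of $g$. Invariance, via Remark~\ref{rem:invariance}, forces each block $P_i$ to be a union of strips $I\times\{j\}$, so $P_i=I\times S_i$ for some $S_i\subseteq J$. Conjugation-compatibility together with Lemma~\ref{lem:partition} then shows that the distinct sets $S_i$ form a $V$-invariant partition $Q'$ of $J$, where $V:=\mon(g)$.

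A case analysis on $Q'$ follows. If $Q'=\{J\}$ (so $r=1$), then $\soc(K)\cong L$ is realized as a diagonal subgroup of $L^{I\times J}\le U^J$: each coordinate projection $\soc(K)\to U$ is non-trivial and hence injective (as $\soc(K)\cong L$ is simple), so Lemma~\ref{lem:diag} yields an embedding $G\hookrightarrow \Aut(\soc(U))$, contradicting the hypothesis applied with $g_1=\mathrm{id}$, $g_2=g$. If $Q'$ is a non-trivial intermediate partition with block size $s\in(1,|J|)$, it corresponds to a proper decomposition $g=g_1\circ g_2$ with $\deg g_2=s>1$; taking $J_2\in Q'$ to be the block containing the base point and $G_2:=\mon(g_2\circ h)$ with kernel $K':=\ker(G_2\to \mon(g_2))$, the image of the simple normal subgroup $K\cap L^{P_{i_0}}\cong L$ (where $P_{i_0}=I\times J_2$) under $\mathrm{Stab}_G(J_2)\twoheadrightarrow G_2$ is a non-trivial simple normal subgroup of $G_2$ contained in $K'$. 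Since $g_2\circ h$ inherits invariance from $f=g\circ h$, Proposition~\ref{prop:carmel} gives $\soc(K')$ minimal normal in $G_2$, so this image equals $\soc(K')$ and is therefore a diagonal $L$ in $L^{I\times J_2}\le U^{J_2}$. Lemma~\ref{lem:diag} applied to $g_2\circ h$ then yields $G_2\hookrightarrow\Aut(\soc(U))$, again contradicting the hypothesis.

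The remaining case, $Q'=\{\{j\}:j\in J\}$, will be the main obstacle. Here $P=P_J$ and $\soc(K)=\prod_{j\in J} L_j$ with each $L_j\cong L$ a diagonal in $L^I=\soc(U)$. When $|I|=1$ this is already immediate: $L_j=\soc(U)$ and so $\soc(K)=\soc(U)^J$, contradicting non-largeness. When $|I|>1$, however, $\soc(K)$ fails to be diagonal in $U^J$ (its $j$-th coordinate projection has kernel $\prod_{j'\ne j}L_{j'}$), so Lemma~\ref{lem:diag} does not apply as before; the plan is to exploit the faithful conjugation embedding $G\hookrightarrow \Aut(\soc(K))\cong \Aut(L^{|J|})$ together with the componentwise analysis underlying Lemma~\ref{lem:diag-nec} — which, via Schreier's conjecture, bounds $|J|\le |I|$ whenever $G$ embeds into $\Aut(\soc(U))=\Aut(L^{|I|})$ — to derive a contradiction with the non-embedding hypothesis $\mon(f)\not\hookrightarrow\Aut(\soc(U))$. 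This subcase is the subtlest and is where I expect the bulk of the technical work in the proof to lie.
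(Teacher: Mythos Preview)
Your proposal contains a genuine error at the step where you invoke Remark~\ref{rem:invariance}. That remark concerns $G$-invariant partitions of the set $G/G_1$ on which $\Mon(f)$ acts imprimitively; the partition $P$ from Proposition~\ref{prop:carmel}, however, lives on $I\times J$, the index set for the \emph{conjugation} action of $G$ on the simple factors of $\soc(U)^J$. These are different $G$-sets, and invariance does not force blocks of $P$ to be unions of strips $I\times\{j\}$. In fact the opposite holds: since the $j$-th coordinate projection $\soc(K)\to\soc(U)=L^I$ is surjective (the block stabilizer surjects onto $U$), and each $K\cap L^{P_i}\cong L$ projects diagonally, every block of $P$ meets each strip $I\times\{j\}$ in at most one point.

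This single observation is the crux of the paper's argument, and it dissolves your ``main obstacle''. With it, for each block $q$ of $\pi(P)$ (this is the partition of $J$ produced by conjugation-compatibility via Lemma~\ref{lem:partition}) there are exactly $|I|$ blocks of $P$ projecting onto $q$, so the $j$-th projection restricts to an isomorphism $\soc(U)^q\cap K\xrightarrow{\sim}\soc(U)$; in other words $\soc(U)^q\cap K$ is diagonal in $\soc(U)^q$. If $|q|>1$, passing to the decomposition $g=g_1\circ g_2$ corresponding to $q$ and applying Lemma~\ref{lem:diag} to $g_2\circ h$ yields the forbidden embedding $\Mon(g_2\circ h)\hookrightarrow\Aut(\soc(U))$. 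Hence every $q$ is a singleton, and then the one-point-per-strip property forces $P$ itself to be the singleton partition, i.e.\ $K\supseteq L^{I\times J}=\soc(U)^J$. Your phantom case ``$Q'$ singletons, $|I|>1$'' cannot occur: under your claimed block shape $P_i=I\times\{j_i\}$, the $j_i$-th projection of $\soc(K)$ would land in a single diagonal $L\subsetneq L^I$, already contradicting surjectivity onto $\soc(U)$.
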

For maps $h$ with monodromy group $U$ whose socle is nonabelian minimal normal, 
the rest of the assumptions are all necessary:
\begin{remark}\label{rem:nec}
    1) The invariance condition on $f=g\circ h$ is necessary by Proposition \ref{lem:inv-nec}. \\ 
    2) The conjugation-compatibility condition is necessary by Lemma \ref{lem:full}. It holds trivially if $U$ is almost simple, see \S\ref{sec:conj}.  \\
    3) The assumption that $g_2\circ h$ is Galois-proper is necessary and holds for polynomial maps, see \S\ref{sec:ker}. If it fails for some $g_2$, it is sometimes still possible to apply the theorem with $h$ replaced by $g_2\circ h$. \\
    4) The condition that $\Mon(g_2\circ h)$ does not embed into $\Aut(\soc(U))$ is also necessary, see Lemma \ref{lem:diag-nec}. It holds, \JK{e.g.,} for polynomial maps, 
    see Remark \ref{remark:polynomial monodromy do not embeed into automorphism group}. As the following example shows, to make sure  the condition holds, it is sometimes necessary to replace $h$ by $g_2\circ h$. 
\end{remark}
\begin{example}
    Let $f=g_1\circ g_2\circ h$, $\deg g_2>1$, and assume $\soc(U)$ is a nonabelian minimal normal subgroup of $U:=\Mon(h)$. If $U'=\Mon(g_2\circ h)\leq \Aut(\soc(U))$, then $G:=\Mon(f)$ is a subgroup of $(U')^{\deg g_1}\rtimes V$ for $V=\Mon(g_1)$. In particular, the kernel of the projection $G\ra V$ may only contain $\soc(U')^{\deg g_1}=\soc(U)^{\deg g_1}$ but not the larger group $\soc(U)^{\deg g}$ where $g=g_1\circ g_2$. In this setup,  the map $h$ from Theorem \ref{thm:main}  should be picked to be the map $g_2\circ h$. 
\end{example}
For the reader interested mainly in the case of polynomials, we recommend reading the following proof under the simplifying assumption that $U$ is almost simple. 
\begin{proof}[Proof of Theorem \ref{thm:main}]
Let $V=\Mon(g)$ act on a set $J$, so that $G:=\Mon(f)$ is identified with a subgroup of $U\wr_J V$. 
Since $\mon(g\circ h)$ is larger than $\mon(g)$, the  projection $G\ra V$ has a nontrivial kernel $K\neq 1$. Since in addition $\soc(U)$ is the nonabelian  minimal normal subgroup of $U$,  it is a direct power $L^{I}$ of a nonabelian simple group $L$. Proposition  \ref{prop:carmel} then implies that $\soc(G)=\soc(K)=  \prod_{p\in P} (L^{p}\cap K)\cong L^P$ for a partition $P$ of $I\times J$. Since the decomposition is conjugation compatible,  $P$ is compatible with the partition $I_j:=I\times\{j\},j\in J$, induced by $f=g\circ h$,  and hence the projection $\pi:I\times J\ra J$ induces a partition $\pi(P)$ of $J$ by Lemma \ref{lem:partition}.

Pick $p \in P$ and let $q:=\pi(p)\subseteq J$ be a block in $\pi(P)$. Let $I_q:=\pi^{-1}(q)\subseteq I\times J$ be the union of blocks $I_j$, $j\in J$. Identifying via the above isomorphism the direct power $\soc(U)^q$ of $\#q$ copies of $\soc(U)$ with the direct power $L^{I_q}$ of $\#q$ copies of $L^I$, we claim that  $\soc(U)^q\cap K$ is a diagonal subgroup of $\soc(U)^q$: Pick $j \in q$, and note  that $j\in \pi(p')$ for $p'\in P$ if and only if $q=\pi(p')$ since $\pi(P)$ is a partition. Since the $j$-th projection maps $\soc(K)=\soc(U)^J\cap K$ onto $\soc(U)$, it maps the subgroup $\soc(U)^q\cap K=\prod_{p'\in P: \pi(p')=q} (L^{p'} \cap K)$ of $\soc(K)$ onto $\soc(U)$. Moreover, the $j$-th projection from $\prod_{p': \pi(p')=q} (L^{p'} \cap K)$ to $\soc(U)$ is an isomorphism since both sides are powers of $L$ of the same cardinality: for,  $\soc(U)=L^I$,  each direct factor   $L^{p'} \cap K$ in the product is isomorphic to $L$ by definition of $P$, and $I_q$ consist of the $\#I$ blocks $p'\in P$ satisfying $\pi(p')=q$. The resulting isomorphism yields the claim. 

Letting $\tilde X\ra \tilde X/G$, denote the Galois closure of $f$, we identify  $g$ with the projection $\tilde X/G_j \ra \tilde X/G$, where $G_j$ is the stabilizer of $j\in J$. Furthermore, write $g=g_1\circ g_2$, where $g_2:\tilde X/G_j\ra \tilde X/G_q$ is the quotient map by the stabilizer $G_q$ of a block $q\in \pi(P)$ containing $j$. 
Then $\Gamma_2:=\mon(g_2\circ h)$ is the image of the action of $G_q$ on the union $Q=\bigcup_{j\in q} j$ of blocks  in $q$. 

We next claim that the image of $\soc(K)$ in $\Gamma_2$ is the socle $\soc(K_2)$ of  $K_2:=\ker(\Gamma_2\ra \Mon(g_2))$.
Set $m_2:=\deg g_2=[G_q:G_j]=\#q$.
Since the decomposition $g_2\circ h$ is Galois-proper and invariant, and since $\soc(U)$ is the nonabelian minimal normal subgroup of $U=\Mon(h)$, $\soc(K_2)=K_2\cap \soc(U)^{m_2}$  is the unique minimal normal subgroup of $\Gamma_2$ by Proposition \ref{prop:carmel}. 
Since $\soc(K)\lhd G_q$ acts nontrivially on $Q$ and trivially on (the set of block $j$ in) $q$, its projection to $\Gamma_2$ is a nontrivial  subgroup of $K_2$ which is normal in $\Gamma_2$, and hence contains $\soc(K_2)$,  \DN{by the unique minimality of the latter. On the other hand, the projection of $\soc(K)\leq \soc(U)^J$ is a subgroup of $\soc(U)^{m_2}$ and hence of $K_2\cap \soc(U)^{m_2}=\soc(K_2)$},  
yielding the equality.  

Since $\soc(U)^q\cap K$ is diagonal in $\soc(U)^{q}$ by the above claim, it follows that its projection $\soc(K_2)$ to $\Gamma_2$ is diagonal in $U^{m_2}$.
If $\pi(P)$ is strictly coarser than $J$, then $m_2>1$, and Lemma \ref{lem:diag} implies that $\Gamma_2=\mon(g_2\circ h)$ is a subgroup of $\Aut(\soc(U))$ contradicting our assumption. Thus, $\pi(P)$ is the trivial partition of $J$ and hence $G$ contains $L^{I\times J}=\soc(U)^{J}$. 
\end{proof}
Note that the proof uses properness, invariance, and the condition of non-embeddability in $\Aut(\soc(U))$  only for  specific decompositions  $g_2\circ h$, where $g=g_1\circ g_2$.

For polynomial maps the assumptions of Theorem \ref{thm:main}  simplify as follows. Recall from \S\ref{sec:inv} that for indecomposable polynomial maps $h$ the invariance of a decomposition $g\circ h$ coincides with its right-uniqueness.  

\begin{cor}\label{cor:pols}
Let $f=g \circ h$ be a right unique decomposition with  $g,h \in F[x]$ and $h$ indecomposable of degree $d\geq 5$ that is not linearly related to $x^d$ or $T_d$ over $\oline F$. 
Then the kernel of the projection from $\Mon(f)$ to $\Mon(g)$ is large. 
\end{cor}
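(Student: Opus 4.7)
My plan is to derive the corollary as a direct application of Theorem \ref{thm:main} by verifying each of its hypotheses in turn, using the hypotheses on $h$ together with the fact that $f$ is a polynomial composition.

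First, I would invoke the structural result that for an indecomposable polynomial $h$ of degree $d \geq 5$ not linearly related to $x^d$ or $T_d$, the monodromy group $U := \Mon(h)$ is almost simple (the theorems of Schur and Burnside, as recorded in Remark \ref{rem:burnside}). This immediately provides the ``nonabelian minimal normal socle'' hypothesis of Theorem \ref{thm:main}. Moreover, because $U$ is almost simple, the set $I$ with $\soc(U) = L^I$ is a singleton, so the partition $P$ from Proposition \ref{prop:carmel} is (identifiable with) a partition of $J$ while $P_J$ degenerates to the partition of $J$ into singletons. Two such partitions are trivially compatible in the sense of Lemma \ref{lem:partition}, so $f = g \circ h$ is conjugation-compatible.

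Next, for the invariance hypothesis, I would use the observation in \S\ref{sec:inv} that for an indecomposable $h$ the right-uniqueness of $f = g \circ h$ coincides with invariance, which is exactly our assumption. For the remaining hypotheses, which concern sub-decompositions $g_2 \circ h$ arising from decompositions $g = g_1 \circ g_2$ with $\deg g_2 > 1$, I would first reduce to the case $g_1, g_2 \in F[x]$: since $g$ is a polynomial, after adjusting by a linear change of variables on the intermediate coordinate we may assume both factors are polynomial. Then $g_2 \circ h \in F[x]$ is a polynomial composition, so Abhyankar's lemma (\S\ref{sec:ker}) gives Galois-properness for free, and Remark \ref{remark:polynomial monodromy do not embeed into automorphism group} gives non-embeddability of $\Mon(g_2 \circ h)$ into $\Aut(\soc(U))$ directly from the polynomiality of $g_2$ together with the standing assumptions on $h$.

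I anticipate no real obstacle here: the corollary is essentially a clean packaging of Theorem \ref{thm:main} for polynomial maps. The potentially delicate conjugation-compatibility condition collapses because the indecomposability of $h$ combined with the exclusion of $x^d$ and $T_d$ forces $\Mon(h)$ to be almost simple, while the potentially delicate non-embedding condition is disposed of by the polynomial-specific Remark \ref{remark:polynomial monodromy do not embeed into automorphism group}, which itself leans on CFSG through \cite[Theorem 1.2(2)]{LiPr}. All four hypotheses of Theorem \ref{thm:main} being verified, it concludes that the kernel of $\Mon(f) \to \Mon(g)$ is large.
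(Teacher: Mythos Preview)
Your proposal is correct and follows essentially the same route as the paper: verify the four hypotheses of Theorem~\ref{thm:main} using Remark~\ref{rem:burnside} for almost-simplicity (hence the socle and conjugation-compatibility conditions), the right-uniqueness assumption for invariance, the polynomial Galois-properness fact from \S\ref{sec:ker}, and Remark~\ref{remark:polynomial monodromy do not embeed into automorphism group} for non-embeddability. The only difference is cosmetic: you make explicit the reduction to polynomial $g_1,g_2$, which the paper tacitly assumes.
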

\begin{proof}
Since $d\geq 5$ and $h$ is not $x^d$ or $T_d$ up to composition with linear polynomials, as in Remark \ref{rem:burnside}, 
$U:=\Mon(h)$ is a nonabelian almost simple group. In particular, $\soc(U)$ is nonabelian minimal normal subgroup of $U$.
Since $\soc(U)$ is simple, the decomposition $f=g \circ h$ is conjugation compatible as noted in \S\ref{sec:conj}. Moreover, letting $g = g_1 \circ g_2$ be a nontrivial decomposition, the decomposition $g_2\circ h$ is invariant since $g\circ h$ is  right-unique. As $g_2$ and $h$ are polynomials, $g_2 \circ h$ is Galois-proper, as noted in \S\ref{sec:ker}. Moreover, $\mon(g_2\circ h)$ does not embed into $\Aut(\soc(U))$ by Remark~\ref{remark:polynomial monodromy do not embeed into automorphism group}. 
Thus, the conditions of Theorem \ref{thm:main} hold and it implies that $K$ contains $\soc(U)^J$. 
\end{proof}

Applying Corollary~\ref{cor:pols} iteratively yields the following.
\begin{cor}\label{cor:pols iterative}
Let $f=f_1\circ \cdots \circ f_r$ for indecomposable polynomials $f_1,\ldots,f_r\in F[x]$ of degree at least $5$ that are not linearly related to $x^d$ or $T_d$ over $\oline F$ for any $d\in \mathbb{N}$. Then $\Mon(f)$ contains the composition factors  of  $[\soc(\Gamma_i)]_{i=1}^r$, where $\Gamma_i:=\Mon(f_i)$. 
\end{cor}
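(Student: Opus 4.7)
The plan is to apply Corollary~\ref{cor:pols} iteratively, combined with the equivalence noted in the introduction (in the paragraph following Theorem~\ref{thm:main-intro}), which states that $\Mon(f)$ contains the composition factors of $[\soc(\Gamma_i)]_{i=1}^r$ if and only if the kernels of the projections $\Mon(g_{i+1}) \to \Mon(g_i)$ are large for every $i = 1, \ldots, r-1$, where $g_i := f_1 \circ \cdots \circ f_i$.

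Accordingly, the main task is to verify, for each such $i$, the hypotheses of Corollary~\ref{cor:pols} for the decomposition $g_{i+1} = g_i \circ f_{i+1}$. Since $f_{i+1} \in F[x]$ is indecomposable of degree $\geq 5$ and not linearly related over $\oline F$ to $x^d$ or $T_d$ by assumption, what remains is to show that this decomposition is right-unique in the sense of \S\ref{sec:inv}.

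Right-uniqueness is the crux, and is where I would invoke Ritt's classical theory of polynomial decompositions. By Ritt's first theorem, any two complete decompositions of $g_{i+1}$ into indecomposable polynomial factors have the same length and the same multiset of degrees, and any two such decompositions are connected by a sequence of ``Ritt swaps'' of two adjacent indecomposable factors. By Ritt's second theorem, any nontrivial such swap only involves factors that are, up to linear equivalence over $\oline F$, of the form $x^d$ or $T_d$. Since $f_{i+1}$ is indecomposable of degree $\geq 5$ and not linearly related over $\oline F$ to any $x^d$ or $T_d$, it cannot participate in any such swap; hence in every complete decomposition of $g_{i+1}$ the rightmost indecomposable factor is linearly related to $f_{i+1}$. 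Combined with the indecomposability of $f_{i+1}$, this yields right-uniqueness of the decomposition $g_i \circ f_{i+1}$.

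With right-uniqueness established, Corollary~\ref{cor:pols} applies and gives that the kernel of $\Mon(g_{i+1}) \to \Mon(g_i)$ is large for every $i = 1, \ldots, r-1$; the desired assertion follows from the biconditional above. The main obstacle is the appeal to Ritt's theorem to secure right-uniqueness; the remainder is a straightforward iteration of the material already established in \S\ref{sec:main}.
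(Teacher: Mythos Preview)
Your proposal is correct and follows essentially the same route as the paper: both iterate Corollary~\ref{cor:pols} and invoke Ritt's theorems to secure right-uniqueness of the decomposition $g_i\circ f_{i+1}$. One small caveat on your phrasing of Ritt's second theorem: it is not true that \emph{every} factor in a nontrivial Ritt swap is linearly related to $x^d$ or $T_d$ (the swap $x^n\circ x^r p(x^n)=x^r p(x)^n\circ x^n$ involves the generally non-special factor $x^r p(x^n)$); the correct statement is that at least one of the two adjacent factors on each side must be of this special form. Your conclusion is unaffected, since by hypothesis \emph{both} $f_i$ and $f_{i+1}$ are non-special, so no swap can occur at any position and right-uniqueness follows.
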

\begin{proof}
Let $K$ be the kernel of the projection from $\Mon(f)$ to $\Mon(g)$ for $g:=f_1\circ\cdots\circ f_{r-1}$. 
Since the decomposition $f=g \circ f_r$ is right unique by \DN{Ritt's theorems \cite[Thm.\ 2.1 and Thm.\ 2.17]{MZ}}, the conditions of Corollary \ref{cor:pols} hold and the corollary implies that $K$ contains $\soc(\Gamma_r)^{\deg g}$. 
Arguing inductively that $G/K = \Mon(g)$ contains the composition factors of $[\soc(\Gamma_{i})]_{i=1}^{r-1}$, we get that $G$ contains all those of $[\soc(\Gamma_{i})]_{i=1}^{r}$.
\end{proof}
To deduce Theorem \ref{thm:main-intro}, it remains to show that a subgroup containing the composition factors of $[A_{n_i}]_{i=1}^r$ contains a copy of it, see Proposition \ref{prop:splitting}. Corollary \ref{cor:spec}, however, follows immediately from Corollary \ref{cor:pols} and Hilbert's irreducibility theorem. 

It is also possible to deduce Corollary \ref{cor:pols iterative} from the following useful consequence of the proof of Theorem \ref{thm:main}. 
\begin{cor}\label{lem:consecutive}
Suppose $f=f_1\circ \cdots\circ f_r$ for indecomposable maps $f_i$ of degree $d_i$ whose monodromy groups $\Gamma_i$ are nonabelian almost-simple for $i=1,\ldots,r$, and write $g_i:=f_1\circ \cdots \circ f_i$. Assume that the Galois closures of $g_i$, $i=1,\ldots,r$ are all distinct and that the kernels $K_i\leq \Gamma_{i}^{d_i}$ of the projections $\Mon(f_{i-1}\circ f_{i})\ra \Gamma_{i-1}$, $i=2,\ldots,r$ are nondiagonal. Then $\Mon(f)$ contains the composition factors of $[\soc(\Gamma_i)]_{i=1}^r$. 
\end{cor}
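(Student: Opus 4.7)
The plan is to proceed by induction on $r$, reducing at each step to showing that the kernel $K^{(i)}$ of the natural projection $\Mon(g_i)\to \Mon(g_{i-1})$ is \emph{large}, i.e.\ contains $\soc(\Gamma_i)^{\deg g_{i-1}}$. Combined with the inductive hypothesis this produces precisely the composition factors of $[\soc(\Gamma_i)]_{i=1}^r$ as a sub-multiset of those of $\Mon(f)$, proving the corollary.

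For the inductive step I would embed $G:=\Mon(g_r)$ into $U\wr_J V$ with $U:=\Gamma_r$, $V:=\Mon(g_{r-1})$ and $|J|=\deg g_{r-1}$, and set $K:=\ker(G\to V)$. The distinctness of Galois closures of $g_r$ and $g_{r-1}$ gives Galois-properness, so $K\neq 1$. Since $\Gamma_r$ is nonabelian almost simple, $\soc(U)=L$ is a nonabelian simple group that is minimal normal in $U$, and conjugation-compatibility is automatic (see \S\ref{sec:conj}). Proposition \ref{prop:carmel} therefore yields $\soc(K)\cap L^J=L^P$ for a $G$-transitive partition $P$ of $J$, and largeness of $K$ is equivalent to $P$ being the partition of $J$ into singletons.

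The core of the argument is a contradiction: assuming some block of $P$ has size $m_2>1$, Lemma \ref{lem:correspondence} produces a decomposition $g_{r-1}=g_1\circ g_2$ with $\deg g_2=m_2$, and replaying the central step of the proof of Theorem \ref{thm:main} shows $\soc(K_2)$ embeds diagonally in $\Gamma_r^{m_2}$, where $K_2:=\ker(\Mon(g_2\circ f_r)\to \Mon(g_2))$. To convert this into a contradiction with the consecutive-pair nondiagonality hypothesis, I would split into cases based on the interaction of $P$ with the natural $G$-invariant partition $\tilde J$ of $J$ coming from the decomposition $g_{r-1}=g_{r-2}\circ f_{r-1}$. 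When $P$ refines $\tilde J$, the indecomposability of $f_{r-1}$ eliminates intermediate subgroups between point stabilizer and $\tilde J$-block stabilizer of $\Mon(g_{r-1})$, which combined with $G$-transitivity on $P$ forces $P=\tilde J$, so that $g_2=f_{r-1}$ and $K_2=K_r$; the diagonality of $\soc(K_2)$ then directly contradicts the nondiagonality of $K_r$. When $P$ does not refine $\tilde J$, the image $\pi(P)$ is a nontrivial partition of $J/\tilde J$, and an analogous argument carried out one level down, invoking nondiagonality of $K_{r-1}$, produces the contradiction.

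The hard part will be handling this second case cleanly, since $P$ there need not align with any obvious decomposition of $g_{r-1}$, and one must set up a descent along the chain of consecutive pairs that tracks how the socle partition produced by Proposition \ref{prop:carmel} restricts compatibly to each level. With that descent in place, the dichotomy of the third paragraph applies successively at every level and invokes the nondiagonality of each $K_i$ together with the indecomposability of $f_i$, closing the induction.
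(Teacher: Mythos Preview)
Your overall architecture---induction on $r$, reduce to largeness of the kernel $K^{(r)}$, invoke Proposition \ref{prop:carmel} to get a partition $P$ of $J$, and derive a contradiction from nondiagonality---matches the paper. The genuine gap is in your treatment of the partition $P$: you split into ``$P$ refines $\tilde J$'' versus ``$P$ does not refine $\tilde J$'', handle the first case, and then defer the second to an unspecified descent. You yourself flag this as ``the hard part'', and indeed your sketch does not work as stated: for $\pi(P)$ to be a partition of $J/\tilde J$ you would need a compatibility statement between $P$ and $\tilde J$ (in the sense of Lemma \ref{lem:partition}) that is not available, and even granting it, iterating down the chain would require knowing how the partition produced by Proposition \ref{prop:carmel} at level $r$ relates to the one at level $r-1$, which is not controlled by your inductive hypothesis.

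The paper avoids the entire dichotomy by \emph{strengthening the inductive hypothesis}: one proves simultaneously that $\Mon(g_i)$ contains the composition factors of $[\soc(\Gamma_j)]_{j=1}^i$ \emph{and} that the chain of stabilizers $H_i\leq H_{i-1}\leq\cdots\leq H_0=G$ is the \emph{unique} maximal chain between $H_i$ and $G$. Uniqueness at level $r-1$ means that the stabilizer of a block of $P$---which is some subgroup strictly containing $H_{r-1}$---must be one of $H_{r-2},\ldots,H_0$, hence in particular contains $H_{r-2}$. This immediately places the diagonal copy of $\soc(K)$ inside $\Mon(f_{r-1}\circ f_r)$ and contradicts the nondiagonality of $K_r$, with no case analysis and no descent. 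The uniqueness clause is then propagated to level $r$ via Proposition \ref{lem:inv-nec} and Remark \ref{rem:invariance}, which say that largeness of the kernel forces every proper intermediate subgroup $H_r\lneq H\leq G$ to contain $H_{r-1}$. This strengthened induction is the key idea your proposal is missing.
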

\begin{proof}
Let $G:=\Mon(f)$ and $\tilde X\ra\tilde X/G$ the Galois closure of $f$. 
Let $H_r\leq H_{r-1}\leq \ldots\leq H_0=G$ be the chain of point stabilizers corresponding to $f_1\circ \cdots \circ f_r$, so that we identify $f_i$ with $\tilde X/H_i\ra\tilde X/H_{i-1}$ for $i=1,\ldots,r$. Since the maps are indecomposable, $H_{i+1}$ is maximal in $H_i$, $i=0,\ldots,r-1$.  

We argue by induction on $r$ that $\Mon(f)$ contains $[\soc(\Gamma_i)]_{i=1}^r$ and furthermore $H_r\leq \ldots\leq H_0$ is the unique chain of maximal subgroups between $H_r$ and $G$, so that  $f=f_1\circ \cdots \circ f_r$ is the unique decomposition of $f$ up \DN{to birational equivalence,} as in \S\ref{sec:imprimitive}.
The base case $r=1$ holds trivially. So assume $r\geq 2$, and inductively that $H_{r-1}\leq \cdots \leq H_0=G$ is the unique chain of maximal subgroups between $H_{r-1}$ and $G$ and that $\Mon(g_{r-1})$ contains the composition factors of $[\soc(\Gamma_i)]_{i=1}^{r-1}$. 

We claim that the socle $\soc(K)$ of  $K=\ker(\Mon(f)\ra\Mon(g_{r-1}))$ is large. This claim clearly implies that $G$ contains all composition factors of $[\soc(\Gamma_i)]_{i=1}^r$, and by Proposition \ref{lem:inv-nec} and Remark \ref{rem:invariance}.1, that every proper intermediate subgroup $H_r\lneq H\lneq G$ contains $H_{r-1}$. Combining with the induction hypothesis, we deduce that $H_r\leq \ldots\leq H_0=G$ is the unique chain of maximal subgroup between $H_r$ and $G$, completing the induction and the proof. 


To prove the claim note that by Proposition \ref{prop:carmel},  $K=L^P$ where $L=\soc(\Gamma_r)$ is nonabelian almost-simple and $P$ is a partition of $J:=G/H_{r-1}$. If $P$ is the trivial partition $P_J=\{\{j\}, j\in J\}$, the claim follows, so we  assume on the contrary it is properly coarser then  $P_J$. 
Since every intermediate subgroup between $H_{r-1}$ and $G$ is of  the form $H_i$ for some $0\leq i\leq r-1$, the correspondence between partitions and subgroups from Lemma \ref{lem:correspondence} implies that a stabilizer $H\geq H_{r-1}$ of a block in $P$ must contain $H\geq H_{r-2}$. As in the end of the proof of Theorem \ref{thm:main}, this implies that the projection of $\soc(K)$ to $\Gamma:=\Mon(f_{r-1}\circ f_r)$ is a nontrivial normal diagonal subgroup of $\Gamma_r^{d_{r-1}}$  contained in $K_{r-1}\cap \soc(\Gamma_{r})^{d_{r-1}}$.  However by Proposition \ref{prop:carmel},  $\soc(K_{r-1})=K_{r-1}\cap \soc(\Gamma_{r})^{d_{r-1}}$ is a minimal normal subgroup of $\Gamma$. Thus, $\soc(K)$ must map onto $\soc(K_{r-1})$ and $\soc(K_{r-1})\leq \Gamma_r^{d_{r-1}}$ must be a diagonal subgroup, i.e., have injective component projections. Since, by the definition of socle, $K_{r-1}$ has no nontrivial normal subgroup disjoint from $\soc(K_{r-1})$, it has injective component projections as well, i.e., is diagonal, contradicting our assumptions.
\end{proof}

\section{Stable primes and prime divisors of dynamical sequences}\label{sec:fp-cycles}
We now proceed to the proof of Corollary \ref{cor:epsilon} and Theorem \ref{cor:cycles}. We break the bulk of the proof into two propositions which apply in  a  more general setup.

In the following, let $F$ be a number field, let $f_i\in F[x]$ be indecomposable polynomials 
%
%
of degree $>1$ ($i\in \mathbb{N}$), and set $\Gamma_i:=\Mon(f_i)$,   $d_i:=\deg(f_i)$, and $d_i':=d_1\cdots d_{i-1}$, $i=1,\ldots,r$. Let $\Omega_i$ denote the splitting field of $(f_1\circ\dots\circ f_i)(x)-t$ over $F(t)$. For fixed $a\in F$, we denote by $G_m =G_{m,a}$ the Galois group of the specialization $\Omega_{m,a}/F$ of $\Omega_m/F(t)$ at $a$, as well as by $K_m=K_{m,a}$ the kernel of the projection $G_m\to G_{m-1}$, so that  $K_m\le \Gamma_m^{d_{m}'}$. Recall from \S\ref{sec: intro - methods} that $K_m$ is {\it large} if $\soc(\Gamma_m)^{d_{m}'}\le K_m$. The main idea in this section is that, for answering several key questions in arithmetic dynamics, the largeness of $K_m$ for {\it infinitely many} $m$ suffices. The relevance of this assumption has been noted before, e.g., in iterating quadratic polynomials \cite{Jones_Mandelbrot}.

\JK{In analogy to the introduction, given a sequence $(f_i)_{i=1}^\infty$ and $a\in F$, we say that $(f_i)_{i=1}^\infty$ is {\it eventually stable}  over $a$, if there exists a constant $C$ such that $(f_1\circ \dots\circ f_m)(X)-a$  has $\le C$ irreducible factors for all $m\in \mathbb{N}$. In the same vein, for a given $C>0$, and a prime $p$ of $F$, we say that $(f_i)_{i=1}^\infty$ is {\it $C$-stable} mod $p$ over $a$, if $(f_1\circ \dots\circ f_m)(X)-a$  mod $p$ (is defined and) has $\le C$ irreducible factors for all $m\in \mathbb{N}$.}

\begin{prop}
\label{lem:cycles1}
Let $\eps>0$ and  $g:\mathbb{N}\to \mathbb{N}$ a function such that $g(n)= o(n)$. Then there exists a constant $N=N(\eps)$ such that the following holds: \\
If  $f_1, f_2, f_3,\dots \in F[x]$ are of bounded degree, and if $a\in F$ is such that the kernel $K_m=K_{m,a}$ is large (in the above notation) for at least $N$ different integers $m$, then the set of primes of $F$ modulo which $(f_i)_{i=1}^\infty$ is $g(N)$-stable over $a$ is of density $<\eps$.\\
\JK{In particular, if $K_m$ is large for infinitely many $m\in \mathbb{N}$, then given any constant $C> 0$, the set of primes modulo which $(f_i)_{i=1}^\infty$ is $C$-stable over $a$ is of density $0$.}
\end{prop}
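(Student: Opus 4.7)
My plan is to translate $C$-stability into a group-theoretic density via Chebotarev and then exploit the largeness of many $K_m$ to show that typical Frobenius elements have many orbits on the leaves of the tree.

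First, for primes $p$ of $F$ outside a finite set, the number of irreducible factors of $(f_1\circ\cdots\circ f_m)(x)-a \bmod p$ equals the number of orbits of $\mathrm{Frob}_p\in G_m$ on the $d_1\cdots d_m$ leaves at level $m$. By Chebotarev, the density of primes with at most $C$ factors at level $m$ equals $|\{g\in G_m : \#\mathrm{orb}(g)\le C\}|/|G_m|$. Since orbit counts are nondecreasing along the tower $G_m\to G_{m-1}\to\cdots$ (each orbit at level $m$ projects into a unique orbit at level $m-1$, and each lower orbit is covered), the density of $C$-stable primes is the limit of this ratio as $m\to\infty$, reducing the problem to showing that few elements of $G_m$ have at most $g(N)$ (resp.\ at most $C$) orbits.

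Second, a wreath-product computation shows that if $g\in G_m$ projects to $\bar g\in G_{m-1}$ with $r$ orbits at level $m-1$, then its orbits at level $m$ decompose as $X_1+\cdots+X_r$, where $X_j$ is the number of orbits on $d_m$ points of an element $\gamma_j\in\Gamma_m$ obtained as the product of the relevant components of $g$ around the $j$-th cycle of $\bar g$. When $K_m\supseteq\soc(\Gamma_m)^{d_m'}$, the $\gamma_j$ range independently over a full coset of $\soc(\Gamma_m)$ in $\Gamma_m$ as $g$ varies uniformly in the fiber above $\bar g$. Uniform boundedness of the $d_i$ forces only finitely many primitive groups $\Gamma_m$ to arise, and for each, there is $\alpha>0$ with the property that at least an $\alpha$-fraction of elements in any $\soc(\Gamma_m)$-coset acts with $\ge 2$ orbits (the complement consisting of transitive elements is a proper subset). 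Hence, conditionally on $\bar g$, $\#\mathrm{orb}(g)$ stochastically dominates $\#\mathrm{orb}(\bar g)+\mathrm{Bernoulli}(\alpha)$.

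Iterating this estimate across the $N$ large levels, and taking a uniform $\alpha$ over the finitely many candidate groups, $\#\mathrm{orb}(g)$ for uniformly random $g\in G_m$ stochastically dominates $1+\mathrm{Binomial}(N,\alpha)$. Hoeffding's inequality yields $|\{g\in G_m:\#\mathrm{orb}(g)\le \alpha N/2\}|/|G_m|<\eps$ once $N\ge N(\eps)$. Since $g(N)=o(N)$, we may enlarge $N(\eps)$ further to guarantee $g(N)<\alpha N/2$, proving the main inequality. The ``in particular'' clause follows by applying the main statement with the constant function $g(n):=C$ (which is $o(n)$): for every $\eps>0$, the infinitude of large $K_m$ supplies at least $N(\eps)$ large levels, so the density of $C$-stable primes is $<\eps$; since $\eps$ is arbitrary, this density is $0$.

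The main obstacle is establishing the uniform lower bound $\alpha>0$ in the second step: one must rule out the (a priori possible) pathology that most elements of some coset $\sigma\cdot\soc(\Gamma_m)$ are $d_m$-cycles. Bounded degree reduces this to finitely many groups, and classical cycle statistics (e.g., the proportion of $n$-cycles in a transitive degree-$n$ group is $<1$, with an analogous statement for cosets of the socle) give the bound, with some care needed for exceptional small degrees. The remaining ingredients are standard Chebotarev and concentration estimates.
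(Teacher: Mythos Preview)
Your proposal is correct and follows essentially the same strategy as the paper: reduce via Chebotarev to bounding the proportion of elements of $G_M$ with few orbits, then show that at each large level $m$ the orbit count gains (stochastically) a Bernoulli($\alpha$) increment, and iterate. The paper obtains the explicit value $\alpha = 1/\max_m d_m$ directly from the transitivity of $\soc(\Gamma_m)$ (for any $d_m$-cycle $\sigma$ there is $\tau\in\soc(\Gamma_m)$ with $\tau\sigma$ fixing a prescribed point), sidestepping your finitely-many-groups reduction, and then bounds the resulting binomial-type sum by hand rather than invoking Hoeffding; otherwise the two arguments coincide.
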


\begin{prop} \label{lem:cycles2}
Let $\eps>0$. Then there exists $N:=N(\eps)$ such that the following holds: If $f_1, f_2, f_3,\dots \in F[x]$ are of degree $\ge 5$, not linearly related over $\overline{F}$ to a monomial or a Chebyshev polynomial, if $a\in F$ is not a branch point of any $f_1\circ\dots\circ f_n$ ($n\in \mathbb{N}$) and is such that $(f_i)_{i\in \mathbb N}$ is eventually stable over $a$ and $K_m$ is large for at least $N$ different integers $m$, then the set of primes $p$ of $F$ such that $a_n:=(f_1\circ \dots \circ f_n)(a_0)\equiv a$ mod $p$ for some $n\in \mathbb{N}$ is of density $<\eps$ for every $a_0\in F$ such that $a\notin \{a_n : n\in \mathbb{N}\}$. 
 
In particular, if $(f_i)_{i\in \mathbb N}$ is eventually stable over $a$ and $K_m$ is large for infinitely many $m\in \mathbb{N}$, then this set of primes is of density $0$.
\end{prop}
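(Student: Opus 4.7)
The plan is to combine the Chebotarev density theorem with an arboreal fixed-point analysis, using the largeness of $K_m$ to control the proportion of Galois elements fixing leaves.

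First, observe that $a_n\equiv a\pmod{p}$ forces the reduction of $a_0$ modulo $p$ to be a root of $P_n(x):=(f_1\circ\cdots\circ f_n)(x)-a$ modulo $p$, placing $p$ in the set $A_n:=\{p : P_n(x)\bmod p\text{ has a root}\}$. By Chebotarev applied to the splitting field $\Omega_{n,a}$ of $P_n$ over $F$, the density of $A_n$ equals $1-\delta_n$, where $\delta_n$ is the proportion of $\sigma\in G_n$ acting as derangements on the roots of $P_n$. Since $P_{n+1}=P_n\circ f_{n+1}$, any root of $P_{n+1}\bmod p$ maps under $f_{n+1}$ to a root of $P_n\bmod p$, so $A_n\supseteq A_{n+1}$. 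The hypothesis $a\notin\{a_n\}_{n\in\mathbb{N}}$ ensures $a_n-a\neq 0$, whence each $T_n:=\{p:a_n\equiv a\pmod{p}\}$ is finite. Combining these, $d(\bigcup_n T_n)\le d(A_n)$ for every $n$, hence $\le\inf_n(1-\delta_n)$, and it suffices to exhibit some $n$ with $\delta_n>1-\eps$.

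Second, I would use the largeness of $K_m$ to recursively lower-bound $\delta_n$. Decomposing $\sigma\in G_m\le \Gamma_m\wr_{J_{m-1}}G_{m-1}$ (with $J_{m-1}$ the set of $d_m'$ leaves at depth $m-1$) as $((h_\alpha)_\alpha,\bar\sigma)$, the element $\sigma$ fixes a leaf at depth $m$ iff there is $\alpha\in\mathrm{Fix}(\bar\sigma)$ with $h_\alpha$ fixing a point of $f_m^{-1}(\alpha)$. When $K_m$ is large, as $\sigma$ varies in the preimage of $\bar\sigma$, each $h_\alpha$ ranges independently over a coset of $L_m:=\mathrm{soc}(\Gamma_m)$ in $\Gamma_m$; since $\Gamma_m$ is nonabelian almost simple by Remark~\ref{rem:burnside}, each such coset has derangement proportion bounded below by a uniform constant $c\in(0,1)$. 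This yields the key bound
\begin{equation*}
1-\delta_m \;\le\; \mathbb{E}_{\bar\sigma\in G_{m-1}}\!\bigl[1-c^{|\mathrm{Fix}(\bar\sigma)|}\bigr].
\end{equation*}
Complementing this, eventual stability over $a$ bounds $\mathbb{E}_{\bar\sigma}[|\mathrm{Fix}(\bar\sigma)|]$ uniformly: by Burnside it equals the number of $G_{m-1}$-orbits on $J_{m-1}$, i.e.\ the number of irreducible factors of $P_{m-1}$, and is thus $\le C$ for some $C\in\mathbb{N}$.

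Third, these two ingredients feed into a critical-type Galton--Watson extinction argument: the sub-tree of \emph{fixed sub-branches} under a random $\sigma$ is a branching process with offspring mean $\le C$, and each large-$K_m$ step contributes a genuine death probability via the $1-c^{|\mathrm{Fix}|}$ factor. Iterating through the $N$ large-kernel indices $m_1<\cdots<m_N$, together with the monotonicity $\delta_n\ge\delta_{n-1}$ between them, yields a quantitative estimate $1-\delta_{m_N}\le \Phi(N)$ with $\Phi(N)\to 0$ as $N\to\infty$. Choosing $N=N(\eps)$ so that $\Phi(N)<\eps$ settles the first assertion; the ``in particular'' claim follows by letting $N\to\infty$.

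The main obstacle is pinning down the quantitative extinction rate $\Phi(N)\to 0$: critical Galton--Watson processes die out only at polynomial rate in the number of levels, and propagating this through the restriction of $h_\alpha$ to cosets of $L_m$ (rather than all of $\Gamma_m$), as well as the possibly non-full wreath structure of $G_m$, requires careful bookkeeping in order not to lose the bound when averaging over $\bar\sigma$.
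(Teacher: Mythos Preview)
Your reduction via Chebotarev to bounding the proportion $1-\delta_m$ of elements with a fixed point, your recursive inequality $1-\delta_m\le \mathbb{E}_{\bar\sigma}[1-c^{|\mathrm{Fix}(\bar\sigma)|}]$ for large-kernel indices $m$, and your use of Cauchy--Frobenius together with eventual stability to bound $\mathbb{E}[|\mathrm{Fix}|]\le C$ are exactly the ingredients the paper uses (see Lemma~\ref{prop:proportion} and its inequality \eqref{eq:cycles0}). The uniform coset-derangement constant $c$ you postulate is also made explicit there: the paper appeals to the classification of monodromy groups of indecomposable polynomials to obtain $c=\alpha=\tfrac14$.

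The genuine gap is precisely the step you flag yourself. A direct ``critical Galton--Watson'' analysis is not how the paper closes the argument, and indeed your concern about only-polynomial extinction rates is real if one tries that route. Instead, the paper avoids any branching-process estimate by a dichotomy. Either there exist $K>0$ and $\varepsilon>0$ such that, for every $m$, at least an $\varepsilon$-fraction of the elements \emph{with} a fixed point have at most $K$ fixed points; in that case your inequality yields, at each large-kernel step,
\[
\frac{1-\delta_m}{1-\delta_{m-1}}\ \le\ 1-\varepsilon\,c^{K},
\]
hence geometric decay along the large-kernel indices and $\delta_m\to 1$. Or no such $K,\varepsilon$ exist; then one may choose $K=\lceil 2/c'\rceil\cdot C$ and $\varepsilon=c'/2$ (where $c'>0$ is any persistent lower bound on $1-\delta_m$, which one may assume, else done) and find some $m$ at which more than a $c'/2$-fraction of elements have more than $2C/c'$ fixed points, forcing the average number of fixed points to exceed $C$ --- contradicting the Cauchy--Frobenius bound coming from eventual stability. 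This case split is the missing idea; once you insert it, your outline becomes the paper's proof.
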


\begin{proof}[Proof of Proposition \ref{lem:cycles1}]
Given any polynomial $f(x)\in F[x]$ and a prime  $p$ unramified in the splitting field of $f$, not dividing the leading coefficient of $f$ and such that the mod-$p$ reduction of $f$ is defined, 
the number of disjoint cycles of the Frobenius at $p$ is at most the number of mod-$p$ factors of $f$.\footnote{\JK{If $f$ mod $p$ is separable, one even has equality. For the general case, it suffices to note that the number of irreducible factors mod $p$ is at least the number of factors over the completion $F_p$, \DN{and the latter  is well known to coincide with the number of orbits of Frobenius.}}} In our situation, $(f_1\circ\dots\circ f_m)(X)-a$ having at most $g(N)$ irreducible factors modulo $p$, thus implies that the Frobenius at $p$ has at most $g(N)$ disjoint cycles.
 
Now fix some $k,m\in \mathbb{N}$ and let $\overline{x}\in G_{m-1}$ be an element with exactly $k$ disjoint cycles. Trivially, any preimage $x$ of $\overline{x}$ in $G_{m}$ under the projection $G_{m}\to G_{m-1}$ has at least $k$ cycles. We now assume that $K_m$ is large and estimate the proportion of such preimages having exactly $k$ cycles. 

Assume that $x$ is any such element. Denote by $\delta_{i,j}\in \{1,\dots, d_1\cdots d_m\}$ the $j$-th element of the $i$-th block in the blocks action induced by $G_{m-1}$, $i=1,\dots, d_1\cdots d_{m-1}$, $j=1,\dots d_m$. By our assumption, every cycle of $x$ has to contain every single element of the blocks involved in it. Up to relabelling elements, we may thus assume that $x$ contains a cycle of the form $(\delta_{1,1}, \dots, \delta_{r,1}, \delta_{1,2}, \dots \delta_{r,2},\dots, \delta_{r, d_m})$ for some $r\in \{1,\dots, d_1\cdots d_{m-1}\}$. 

Now consider the elements $y\circ x$, where $y$ moves through $U:=\textrm{soc}(\Gamma_m)\times \{1\}\times \dots\times \{1\} \le K_m$. (In particular, such $y$ acts nontrivially only on the first block). As mentioned above, all these elements $y\circ x$ have $\ge k$ disjoint cycles; furthermore, whenever $y(\delta_{1,2})=\delta_{1,1}$, the length of the cycle of $y\circ x$ containing $\delta_{1,1}$ is  $r$, which is strictly less than $rd_m$, i.e., $y\circ x$ has strictly more cycles than its projection to $G_{m-1}$. The proportion of such elements $y$ inside $U$ is $1/d_m \ge 1/\max_{m\in \mathbb{N}}d_m>0$. Therefore, via decomposing any coset $K_m x$ into cosets of $U$, we see that the proportion of elements of $G_m$ having the same number of cycles as their projection to $G_{m-1}$ is at most $\gamma:=1-1/\max_{m\in \mathbb{N}}d_m < 1$. Assume now that $M,N\in \mathbb{N}$ are such that $K_m$ is large for exactly $N$ different integers $m\le M$. Iterating the above for all those $m$, we see that: 
$$\frac{\#\{x\in G_M\,:\,x\text{ has }\leq g(N)\text{ cycles}\}}{\#G_M}\leq \sum_{k=0}^{g(N)-1} \gamma^{N-k}\cdot \begin{pmatrix}N\\N-k\end{pmatrix},$$ 
since indeed any single summand gives an upper bound for the probability to have exactly $k$ cycles, the binomial coefficient coming from the possible choices of indices $m$ for which the number of cycles does not grow from $G_{m-1}$ to $G_m$. Using furthermore the elementary estimate $\begin{pmatrix}N\\ k\end{pmatrix}\le \frac{N^k}{k!} \le \frac{N^{k'}}{(k')!}$ for any $k\le k'\le N$, we get 
$$\sum_{k=0}^{g(N)-1} \gamma^{N-k}\cdot \begin{pmatrix}N\\N-k\end{pmatrix} \le g(N)\cdot (\gamma^{\frac{N-g(N)}{g(N)}})^{g(N)}\cdot \frac{N^{g(N)}}{g(N)!} \le (\gamma^{N/g(N) - 1}\cdot N)^{g(N)},$$
which, due to $g(N)= o(N)$, clearly converges to $0$ as $N\to \infty$. 

Chebotarev's density theorem now implies that, for $N:=N(\eps)$ large enough, the density of primes $p$ of $F$ which are unramified in the splitting field of $(f_1\circ \dots \circ f_M)(x)-a$ and whose Frobenius has at most $g(N)$ cycles is less than $\eps$. Finally, the set of primes which do ramify in this splitting field, or divide the leading coefficient, is a finite set \DN{and hence of density $0$}, from which the assertion follows.
\end{proof}

The proof of Proposition \ref{lem:cycles2} relies on the following lemma:
\begin{lemma}\label{prop:proportion}
Suppose $\alpha, C>0$ are constants and $\Gamma_i$, $i\in\mathbb N$ are nonabelian almost simple groups for which the proportion of fixed point free elements in every coset of $\soc(\Gamma_i)$ in $\Gamma_i$, $i\in \mathbb N$, is at least $\alpha$.
Let $G_m \leq \Gamma_m\wr \cdots \wr \Gamma_1$, $m\in \mathbb N$ be a sequence of subgroups with natural projections $G_m\ra G_{m-1}$ such that each $G_m$ has at most $C$ orbits, and the kernels $K_m$, $m\in\mathbb N$ are large for infinitely many $m\in \mathbb{N}$. 
Then the proportion of fixed point free elements of $G_m$ tends to $1$ as $m\to\infty$. 
\end{lemma}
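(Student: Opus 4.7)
The plan is to let $p_m$ denote the proportion of fixed-point-free elements of $G_m$ and $r_m := 1 - p_m$, establish monotonicity $p_m \geq p_{m-1}$, and exploit the large-kernel steps via a Markov-type argument to force $p_m \to 1$.

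Start with the \emph{fiber observation}: writing elements of $G_m$ as pairs $(k,\bar g) \in \Gamma_m^{L_{m-1}} \rtimes G_{m-1}$, where $L_{m-1}$ indexes the tree-fibers at level $m-1$, such an element has a fixed point on $L_m$ iff $\bar g$ fixes some $\bar\ell \in L_{m-1}$ and the component $k_{\bar\ell} \in \Gamma_m$ fixes a point of the fiber above $\bar\ell$; indeed, fibers over non-fixed leaves are cyclically permuted by $\bar g$ and contribute nothing. In particular every lift of a fixed-point-free $\bar g$ is fixed-point-free, so $p_m \geq p_{m-1}$ and $p_m$ converges to some $p^\ast \in [0,1]$.

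At each step $m$ where $K_m$ is large, decompose $K_m$ into cosets of $N_m := \soc(\Gamma_m)^{L_{m-1}}$. Within a coset $\tau N_m$ the components $k_{\bar\ell}$ vary independently, each uniformly over the coset $\tau_{\bar\ell}\,\soc(\Gamma_m)$ of $\soc(\Gamma_m)$ in $\Gamma_m$. By hypothesis each such component is fixed-point-free with probability $\geq \alpha$; combining with the fiber observation and averaging yields
\[
p_m \;\geq\; \mathbb{E}_{\bar g \in G_{m-1}}\!\bigl[\alpha^{|\mathrm{Fix}(\bar g)|}\bigr].
\]
Using that $G_{m-1}$ has $\leq C$ orbits on $L_{m-1}$, Burnside's lemma gives $\mathbb{E}[|\mathrm{Fix}(\bar g)|] \leq C$; Markov's inequality conditional on $\mathrm{Fix}(\bar g) \neq \emptyset$ then bounds this expectation on that event by $C/r_{m-1}$, and Jensen's inequality for the convex function $x \mapsto \alpha^x$ gives
\[
p_m - p_{m-1} \;\geq\; r_{m-1}\,\alpha^{C/r_{m-1}}
\]
at every large-$K_m$ step.

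If $p^\ast < 1$, then $r_m \geq r^\ast := 1 - p^\ast > 0$ for all $m$, so the right-hand side of the gap bound is at least the positive constant $r^\ast \alpha^{C/r^\ast}$ at every large-$K_m$ step. Since there are infinitely many such steps, this contradicts $p_m - p_{m-1} \to 0$, which follows from the convergence of $p_m$. Hence $p^\ast = 1$. The main obstacle is the coset-independence argument above: one must verify that within an $N_m$-coset of $K_m$ the projections to distinct coordinates of $L_{m-1}$ are genuinely independent and uniform over \emph{full} cosets of $\soc(\Gamma_m)$ in $\Gamma_m$, which is the sole place where the largeness of $K_m$ enters and rests on the direct-product structure of $N_m$. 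The remaining ingredients --- monotone bounded convergence, Burnside, Markov, Jensen --- are all standard.
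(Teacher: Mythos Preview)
Your proof is correct and in fact cleaner than the paper's. Both arguments establish the same key inequality
\[
p_m \;\geq\; \mathbb{E}_{\bar g\in G_{m-1}}\bigl[\alpha^{|\mathrm{Fix}(\bar g)|}\bigr]
\]
at large-kernel steps (this is exactly the paper's displayed inequality \eqref{eq:cycles0}), via the same coset-independence reasoning inside $N_m=\soc(\Gamma_m)^{L_{m-1}}$. The divergence is in how the right-hand side is exploited. The paper introduces an auxiliary dichotomy: either the conditional distribution of $|\mathrm{Fix}(\bar g)|$ given $\mathrm{Fix}(\bar g)\neq\emptyset$ has a uniformly positive mass on some bounded initial segment $\{1,\dots,K\}$ (in which case one gets a geometric contraction of $1-p_m$ directly), or else one reaches a contradiction with the Cauchy--Frobenius bound on average fixed points. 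You bypass this split entirely by applying Jensen to the convex function $x\mapsto \alpha^x$ on the conditional distribution, combined with $\mathbb{E}[|\mathrm{Fix}(\bar g)|\mid \mathrm{Fix}\neq\emptyset]\le C/r_{m-1}$, yielding the single clean gap bound $p_m-p_{m-1}\ge r_{m-1}\alpha^{C/r_{m-1}}$. Since $r\mapsto r\alpha^{C/r}$ is increasing on $(0,1]$ (its logarithmic derivative is $1/r + C|\log\alpha|/r^2>0$), this is bounded below by $r^\ast\alpha^{C/r^\ast}>0$ whenever $r_{m-1}\ge r^\ast$, and the contradiction follows.

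Two small remarks on presentation. What you call ``Markov's inequality conditional on $\mathrm{Fix}\neq\emptyset$'' is really just the identity $\mathbb{E}[X]=r_{m-1}\cdot\mathbb{E}[X\mid X\ge 1]$; no inequality is involved at that step. And in the final paragraph you might make explicit that $r\alpha^{C/r}$ is monotone in $r$, since the lower bound $r^\ast\alpha^{C/r^\ast}$ is not otherwise immediate from $r_{m-1}\ge r^\ast$ alone. Neither point affects correctness.
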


\begin{proof}
Denote by $X_m(k)$ (resp., $X_m(\ge k)$) the set of elements in $G_m$ fixing exactly $k$ (resp., at least $k$) points, and by $p_m(k)$ (resp., $p_m(\ge k)$) the proportion of the respective sets inside $G_m$. Our goal is to show that $p_m(0)\to 1$ as $m\to \infty$. 

We claim that the following estimate holds:
\begin{equation}\label{eq:cycles0}
p_m(0)\ge \sum_{j\ge 0}p_{m-1}(j)\cdot \alpha^j
\end{equation}
for all $m\ge 2$ such that $K_m$ is large. Indeed, for any such $m$, choose $x\in G_m$ such that the image of $x$ in $G_{m-1}$ has exactly $k$ fixed points, and count the fixed point free elements in the $\soc(K_m)$-coset of $x$. For $y = (y_1,\dots y_{d_1\cdots d_{m-1}})\in \soc(K_m)$, the element $x\circ y$ is fixed point free as long as, for every component $j\in \{1,\dots, d_1\cdots d_{m-1}\}$ fixed by $x$, the image of $x\circ y$ on this component is fixed point free; clearly, this occurs with probability $\ge \alpha^k$, since the component entries $y_j$ may be chosen independently (since $K_m$ is large, that is, $\soc(K_m) = \soc(\Gamma_m)^{d_1\cdots d_{m-1}}$), yielding the claim.

Assume \DN{now that for some $K>0$ and $\eps>0$ the proportion of elements with at most $K$ fixed points among all elements with a fixed point is at least $\eps$ in every $G_m$; that is,}
\begin{equation}
\label{eq:cycles1}
\exists_{K>0, \varepsilon>0} \forall_{m\in\mathbb{N}}: \dfrac{\#X_m(1) + \dots +\#X_m(K)}{\#X_m(\ge 1)} \ge \varepsilon.
\end{equation}
Then, for all $m\ge 2$ which fulfill \eqref{eq:cycles0}, we have:
$$(1-p_{m-1}(0))-(1-p_{m}(0)) = p_{m}(0)-p_{m-1}(0)\ge p_{m-1}(1)\alpha +\cdots+ p_{m-1}(K)\alpha^K\ge $$ $$\ge (p_{m-1}(1)+\dots + p_{m-1}(K))\alpha^K \ge \varepsilon \alpha^K \cdot \frac{\#X_{m-1}(\ge 1)}{\#G_{m-1}} = \varepsilon \alpha^K (1-p_{m-1}(0)),$$
or equivalently $\frac{1-p_{m}(0)}{1-p_{m-1}(0)}< 1-\varepsilon \alpha^K$, 
where the constant $\varepsilon\alpha^K>0$ is independent of $m$. Since the sequence $(1-p_{m}(0))_{m\in \mathbb{N}}$ is certainly 
monotonously decreasing, and there are infinitely many $m$ fulfilling the above by assumption, the sequence $(1-p_{m}(0))_{m\in \mathbb{N}}$ converges to $0$, so that $p_m(0)$ converges to $1$ as $m\ra\infty$. 


Assume therefore that \eqref{eq:cycles1} does not hold, i.e.
\begin{equation}
\label{eq:cycles2}
\forall_{K>0, \varepsilon>0} \exists_{m\in\mathbb{N}}: \dfrac{\#X_m(1) + \dots 
 + \#X_m(K)}{\#X_m(\ge 1)} < \varepsilon.
\end{equation}
At the same time, we may assume
\begin{equation}
\label{eq:cycles3}
\exists_{c>0} \forall_{m\in \mathbb{N}}: p_m(\ge 1) > c,
\end{equation}
or otherwise the monotonously increasing sequence $p_m(0)$ certainly converges to $1$. Then, setting $K:=\lceil2/c\rceil \cdot C$ and $\varepsilon:=c/2$ in \eqref{eq:cycles2}, we get the existence of $m\in \mathbb{N}$ such that
$$\dfrac{\#X_m(0) + (\#X_m(1) +\dots + \#X_m(\lceil 2/c\rceil C))}{\#G_m} < (1-c) + \frac{c}{2}p_m(\ge 1)\le 1-\frac{c}{2},$$
and consequently the proportion of elements having more than $2C/c$ fixed points is larger than $c/2$, so that the average number of fixed points in $G_m$ is larger than $C$. However since $G_m$ has at most $C$ orbits,
by the Cauchy-Frobenius formula that average needs to be $\le C$, yielding a contradiction.

In total, we have obtained $p_m(0)\to 1$ in all cases.
\end{proof}

\begin{proof}[Proof of Proposition \ref{lem:cycles2}]
%
Set $a_n=(f_1\circ\dots\circ f_n)(a_0)$ for $n\in \mathbb N$. Firstly, for any given $m\in \mathbb{N}$, 
there can be only finitely many primes $p$ such that $a_n\equiv a$ mod $p$ for some $n$ {\it less than} $m$,  due to the assumption $a\notin \{a_n: n\in \mathbb{N}\}$.
On the other hand, all {\it other} primes $p$ for which $a_n\equiv a$ mod $p$ for some $n(\ge m)$ are such that $(f_1\circ \dots \circ f_m)(x)-a$ has a root modulo $p$.

Under the assumption that $a$ is a non-branch point of $f_1\circ\dots\circ f_m$, the splitting field of $(f_1\circ \dots \circ f_m)(x)-a$ equals the specialization $\Omega_{m,a}/F$, see, e.g., \cite[Lemma 2]{KN18}. Moreover, exempting finitely many primes $p$ (namely, those ramifying in $\Omega_{m,a}/F$), the condition $a_n\equiv a$ mod $p$ for some $n\ge m$ implies that the Frobenius at $p$ in $G_m$ has at least one fixed point, and hence that $(f_1\circ\dots\circ f_m)(x)-a$ has a root modulo $p$. 
 Given {\it any} $m\in\mathbb N$, it thus follows that the density of primes $p$ for which $a_n\equiv a$ mod $p$ for some $n$ is bounded from above by the proportion of elements with a fixed point in $G_m$. 
 It therefore suffices to prove that, under our assumptions, the proportion of fixed point free elements in $G_m$ tends to $1$ as $m\to \infty$.

 This follows from Lemma \ref{prop:proportion}, once we show that there exists a constant $\alpha>0$ such that the number of fixed point free elements in every coset of $\soc(\Gamma_i)$ in $\Gamma_i$, $i\in \mathbb N$, is at least $\alpha$. We claim that we can choose $\alpha=\frac{1}{4}$, thus concluding the proof. Indeed, by the classification achieved in \cite{Mul}, all primitive monodromy groups of nonsolvable polynomials of degree $>31$ are alternating or symmetric (yielding a value for $\alpha$ rapidly approaching $1/e$, since the proportion of fixed point free elements in the two cosets of $A_n$ is well known to be  $\sum_{i=0}^n \frac{(-1)^i}{i!} \pm \frac{n-1}{n!}$, cf.\ \cite{Olds}), 
 whereas for primitive monodromy groups of degree $\le 31$,\footnote{Here, we use $d_i\ge 5$ and the assumption that $f_i$ is not linearly related to a monomial or a Chebyshev polynomial, since indeed the claim becomes false for $\Gamma_i=S_4$ or $\soc(\Gamma_i)=C_p$.} a direct computer check gives the lower bound $\alpha=1/4$, with the minimal value attained exactly for the group $\Gamma_i = \PGammaL_2(9)\le S_{10}$. 
\end{proof}

\begin{remark}
\begin{itemize}
\item[a)] The above proof can be adapted to composition of not necessarily polynomial maps $f_i: X_i\to X_{i-1}$ of curves $X_i$ with almost simple monodromy groups, up to some technical conditions. Notably, the condition on the proportion of fixed point free elements in cosets of $\soc(\Gamma_i)$ being bounded away from $0$, which is automatically fulfilled in the case of polynomial maps, should be added as an extra assumption. 
\item[b)] 
Furthermore, the proof readily  yields a positive characteristic analog as long as the polynomials involved define tamely ramified separable function field extensions. This is because of the group-theoretic nature of the arguments; in particular, all underlying classification results for monodromy groups of polynomials, as well as Ritt's theorem carry over from $\mathbb{C}$ to $\overline{\mathbb F}_p$ as long as the condition of tame ramification is imposed, cf.\ \cite{DW72}. 
\end{itemize}
\end{remark}

\begin{proof}[Proof of Theorem \ref{cor:cycles}]
Assertion \DN{1)} of the theorem is immediate from the last assertion of Proposition \ref{lem:cycles1}, 
upon setting $f_n=f$ for all $n\in \mathbb{N}$. Similarly, Assertion \DN{2}) is immediate from the last assertion of Proposition \ref{lem:cycles2}.
\end{proof}

\begin{proof}[Proof of Corollary \ref{cor:epsilon}]
This follows from Proposition  \ref{lem:cycles1} and \ref{lem:cycles2} as soon as we can make sure that, for every given $m\in \mathbb{N}$, the set of values $a$ which do not yield a large $K_m$ is a thin set. This, however, is asserted by Corollary \ref{cor:spec}.
\end{proof}

\JK{\begin{remark}
\label{rem:finite_index}
Note that in the event of large kernel for all $m$, the extra assumptions $a\notin \{f^{\circ n}(a_0): n \in \mathbb{N}\}$ and ``$f$ eventually stable over $a$" in Theorem \ref{cor:cycles} are automatically fulfilled; indeed, since the socle of a primitive group is necessarily transitive, this assumption forces $\Image\rho_{f,a}^{(n)}$ to be transitive, and hence $f^{\circ n}(x)-a$ to be irreducible for all $n\in \mathbb{N}$.
\end{remark}}

\section{Largeness of arboreal representations}
\label{sec:spec}
\JK{The previous section has left open whether, for given sequences $(f_n)_{n\in \mathbb{N}}$ of polynomials, there exist values $a$ for which the kernels $K_m=K_{m,a}$ are large for infinitely many $m$, as assumed in Theorem \ref{cor:cycles}.
In Proposition \ref{prop:spec}, given an (infinite) sequence of \DN{polynomial} compositions \DN{with bounded number of branch points}, we will provide general sufficient conditions for such largeness of kernels.}
In particular this provides examples for situations in which the assumptions of the previous section are fulfilled. 

Recall \cite{Dix} that a normal subgroup $N\lhd G$ is {\it invariably generated} (in $G$) by subgroups $I_1,\ldots,I_r\leq G$ if  the subgroup generated by the conjugates $I_1^{x_1},\ldots,I_r^{x_r}$ contains $N$ for 
any $x_1,\ldots,x_r\in G$. 
\begin{prop}\label{prop:spec}
Let $F$ be a number field and $N\in\mathbb N$. Let $f_1,f_2,\ldots$ be a sequence of indecomposable polynomials in $F[x]$  
with $\deg f_i\leq N$, and set $\Gamma_r:=\Mon(f_r)$. 
Assume that all of the following hold:
\begin{itemize}
\item[i)] The union of branch points of $g_r:=f_1\circ\cdots\circ f_r$, $r\in \mathbb N$ is finite.
\item[ii)] For each $r\in \mathbb{N}$, $f_r\circ f_{r+1}$ has at least one inertia subgroup whose intersection with $\ker(\Mon(f_r\circ f_{r+1})\to \Gamma_r)$ is nondiagonal in $\Gamma_{r+1}^{\deg(f_r)}$.
\item[iii)]
The socle of $\Gamma_r$ is nonabelian and is invariably generated in $Aut(\Gamma_r)$ by the inertia subgroups of $f_r$, for all $r\in \mathbb{N}$. 
\end{itemize}
Then there exist infinitely many $\alpha\in F$ such that the Galois group of the fiber of $g_r$ over $t\mapsto\alpha$  contains all composition factors 
of $[\soc(\Gamma_i)]_{i=1}^r$ for all $r\in\mathbb N$.
\end{prop}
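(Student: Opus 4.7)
The plan is to build suitable $\alpha\in F$ by imposing $p$-adic conditions at well-chosen primes, and then to deduce the size of the specialized Galois groups via the invariable generation hypothesis. First, by condition (i) the set $\mathcal{B}:=\bigcup_{r}\Br(g_r)$ is finite, and by Corollary~\ref{cor:pols iterative} together with (ii), $\Mon(g_r)$ contains the composition factors of $[\soc(\Gamma_i)]_{i=1}^r$ while the kernels $\ker(\Mon(g_{r+1})\to\Mon(g_r))$ are large. Condition (iii) forces $\soc(\Mon(g_r))$ to be nonabelian, hence $\Mon(g_r)$ has trivial center, so Theorem~\ref{thm:ram_in} supplies an explicit exceptional set $\mathcal{S}_r$ whose growth with $r$ comes only from primes dividing $|\Mon(g_r)|$, the remaining contributions being fixed once $\mathcal{B}$ is fixed.

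For each pair $(r,t)\in\mathbb{N}\times\mathcal{B}$ I select a prime $p_t^{(r)}\notin\mathcal{S}_r$, with these primes pairwise distinct, and use weak approximation in $F$ to construct $\alpha\in F$ satisfying $\nu_{p_t^{(r)}}(\alpha-t)=1$ for every such pair (or the analogous condition at $\infty$). This is carried out via diagonalization over $r$: one realizes $\alpha_N\in F$ meeting the conditions for all $r\le N$, and freedom at additional auxiliary primes yields infinitely many distinct suitable $\alpha$. Theorem~\ref{thm:ram_in} then ensures that for each such $\alpha$, the inertia at $p_t^{(r)}$ in $G_r(\alpha):=\Gal(\Omega_{r,\alpha}/F)$ is conjugate in $\Mon(g_r)$ to the inertia generator of $g_r$ at $t$, so that $G_r(\alpha)$ acquires conjugates of these inertia subgroups for every $t\in\mathcal{B}$ and every $r$.

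I then induct on $r$: assuming $G_{r-1}(\alpha)$ contains the composition factors of $[\soc(\Gamma_i)]_{i=1}^{r-1}$, the kernel $K_r(\alpha):=\ker(G_r(\alpha)\to G_{r-1}(\alpha))$ sits inside $\ker(\Mon(g_r)\to\Mon(g_{r-1}))\supseteq\soc(\Gamma_r)^{d_{r-1}'}$, where $d_{r-1}':=\deg(g_{r-1})$. The inertia data from the previous paragraph, restricted to branch points whose inertia in $\Mon(g_r)$ lies in this kernel, projects into each of the $d_{r-1}'$ coordinates as a conjugate in $\Aut(\Gamma_r)$ of an inertia generator of $f_r$. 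Condition (iii) then invariably produces a subgroup containing $\soc(\Gamma_r)$ in each coordinate, while condition (ii), combined with Lemma~\ref{lem:diag}, rules out the degenerate scenario in which the resulting normal subgroup collapses to a diagonal subgroup of $\Gamma_r^{d_{r-1}'}$; together these yield that $K_r(\alpha)$ contains the composition factors of $\soc(\Gamma_r)^{d_{r-1}'}$, completing the induction.

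The main obstacle is the uniform construction of $\alpha$ across all $r\in\mathbb{N}$ simultaneously: while the finiteness of $\mathcal{B}$ bounds the number of $p$-adic conditions per level, the exceptional sets $\mathcal{S}_r$ grow unboundedly with $|\Mon(g_r)|$, forcing the primes $p_t^{(r)}$ to grow accordingly. The diagonalization argument, together with the density of $F$ in its adèles, is what makes this construction possible and produces infinitely many valid $\alpha$.
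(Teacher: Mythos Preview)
Your overall strategy---choose primes to control inertia in the specialization, then use invariable generation and the nondiagonality hypothesis to push largeness down to the specialized tower---matches the paper's. However, there is a real gap in your construction of $\alpha$.

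You assert that the exceptional sets $\mathcal{S}_r$ ``grow unboundedly with $|\Mon(g_r)|$'', and therefore introduce a separate family of primes $p_t^{(r)}$ for each level $r$. This leads you to impose infinitely many $p$-adic conditions on $\alpha$, which you try to handle by ``diagonalization''. But producing an $\alpha_N$ valid for $r\le N$ does not yield any single $\alpha$ valid for all $r$, and weak approximation cannot satisfy infinitely many independent local constraints. As written, the argument does not construct the required $\alpha$.

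The point you are missing is that the bounded-degree hypothesis $\deg f_i\le N$ forces every prime dividing $|\Mon(g_r)|$ to divide $N!$, so the exceptional set in Theorem~\ref{thm:ram_in} is \emph{uniformly bounded} over all $r$ (the other contributions depend only on the finite set $\mathcal{B}$ of branch points). Consequently one needs only a single prime $p_j$ per branch point $t_j\in\mathcal{B}$, chosen once and for all, and the Chinese Remainder Theorem produces infinitely many $\alpha\in F$ meeting each $t_j$ at $p_j$ with multiplicity $1$. With this correction the inertia data you describe is available simultaneously for every $r$, and the inductive part of your argument (which is close in spirit to the field version of Corollary~\ref{lem:consecutive} used in the paper) can proceed.
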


\begin{remark}\label{rem:cond}
\DN{
Assumption ii) is automatically fulfilled in important key cases, see Lemma \ref{lem:nondiag}, and also holds for generic choices of polynomials. Assumption iii) is generically fulfilled from a group-theoretic viewpoint:  namely, it holds for most inertia configurations of polynomials since a a $d$-cycle and a random permutation (coming from inertia groups at infinity and another point) generate $A_d$ with probability tending to $1$, as $d\to \infty$ \cite{LP}. Many such examples are given in Example \ref{exam:shabat}(1).  Assumption i) on the other hand does not hold generically, and hence is the most restrictive among the assumptions.  The assertion is expected to hold though far beyond this assumption, see Remark \ref{rem:optimist}.2.}
\end{remark}
We shall use the relation between the two diagonality notions from \S\ref{sec:ker}:
\begin{lemma}\label{lem:diagvsdiag} Let $V$ act transitively on $J$, let $G\le U\wr_J V$ project onto $V$, set $K:=G\cap U^J$ and let $U_0\trianglelefteq U$ the image of $K$ under projection to a component. If $Z(\soc(U_0))=1$ 
and $K$ is a diagonal subgroup, then every $x\in K$ is diagonal in $U_0^J$.\end{lemma}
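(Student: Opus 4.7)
The plan is to exploit the diagonal structure of $K$ to produce, for each pair $j_1, j_2 \in J$, a canonical automorphism $\psi_{j_1, j_2}$ of $U_0$ satisfying $x_{j_1} = \psi_{j_1, j_2}(x_{j_2})$ for all $x \in K$, and then to show that $\psi_{j_1, j_2}$ is implemented by conjugation inside $\Aut(\soc(U_0))$. This will directly witness the conjugacy of $x_{j_1}$ and $x_{j_2}$ in $\Aut(\soc(U_0))$ required by the definition of a diagonal element.

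First, I would verify that each component projection $\pi_j : K \to U_0$ is an isomorphism: injectivity is the hypothesis that $K$ is diagonal, while surjectivity (with the \emph{same} target $U_0$ for every $j$) uses that $G$ projects onto $V$, which is transitive on $J$, combined with $U_0 \trianglelefteq U$; indeed, images of distinct component projections are $U$-conjugates of $U_0$ and hence equal $U_0$. Setting $\psi_{j_1, j_2} := \pi_{j_1} \circ \pi_{j_2}^{-1} \in \Aut(U_0)$, the identity $x_{j_1} = \psi_{j_1, j_2}(x_{j_2})$ for $x\in K$ is then tautological. I would also record the standard fact that $Z(\soc(U_0)) = 1$ forces $C_{U_0}(\soc(U_0)) = 1$ (by the socle argument of Lemma \ref{lem:large-min-normal-B}), so that the conjugation action of $U_0$ on $\soc(U_0)$ yields a canonical embedding $c : U_0 \hookrightarrow \Aut(\soc(U_0))$.

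The heart of the argument is the choice of witness: take $\sigma := \psi_{j_1, j_2}|_{\soc(U_0)}$, which lies in $\Aut(\soc(U_0))$ since $\soc(U_0)$ is characteristic in $U_0$. The claim is then $\sigma\, c(u)\, \sigma^{-1} = c(\psi_{j_1, j_2}(u))$ in $\Aut(\soc(U_0))$ for every $u \in U_0$. Since elements of $\Aut(S)$ (with $S := \soc(U_0)$) are determined by their action on $S$, it suffices to evaluate both sides on an arbitrary $s \in S$: a short computation, using that $\psi_{j_1, j_2}$ is a group automorphism of $U_0$ and that $\sigma = \psi_{j_1, j_2}|_S$, gives $(\sigma\, c(u)\, \sigma^{-1})(s) = \psi_{j_1, j_2}(u) \cdot s \cdot \psi_{j_1, j_2}(u)^{-1}$, which matches the action of $c(\psi_{j_1, j_2}(u))$ on $s$. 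Applying this with $u := x_{j_2}$ yields $\sigma\, c(x_{j_2})\, \sigma^{-1} = c(x_{j_1})$ in $\Aut(\soc(U_0))$, so $x_{j_1}$ and $x_{j_2}$ are $\Aut(\soc(U_0))$-conjugate, as required.

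The main obstacle is notational rather than conceptual: one has to juggle the two environments in which $U_0$ sits, namely $U_0$ itself (where $\psi_{j_1, j_2}$ lives as an abstract automorphism) and $\Aut(\soc(U_0))$ (where the witness $\sigma$ acts by conjugation on $U_0$). Once one recognizes that the natural candidate for the conjugating element is the restriction of $\psi_{j_1, j_2}$ to the socle, the verification collapses to a single identity in $\Aut(\soc(U_0))$; the transitivity and wreath-product hypotheses enter only to ensure that $U_0$ is the common image of all component projections of $K$.
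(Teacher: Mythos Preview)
Your proof is correct and follows essentially the same approach as the paper's: both construct the automorphism $\psi_{j_1,j_2}=\pi_{j_1}\circ\pi_{j_2}^{-1}\in\Aut(U_0)$ and then pass to $\Aut(\soc(U_0))$ using that $C_{U_0}(\soc(U_0))=1$. The paper compresses your last two paragraphs into the single line ``$\Aut(U_0)$ embeds into $\Aut(\soc(U_0))$'' (via restriction to the characteristic subgroup $\soc(U_0)$), whereas you unpack this by explicitly naming $\sigma=\psi_{j_1,j_2}|_{\soc(U_0)}$ and verifying the conjugation identity; your additional remark that all component projections share the common image $U_0$ (via transitivity of $V$ and normality of $U_0$ in $U$) is a detail the paper leaves implicit.
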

\begin{proof} Let $\pi_j:K\to U_0$, $j\in J$ denote the component projections. Since $K$ is diagonal, $\pi_{j_1}\circ \pi_{j_2}^{-1}\in \Aut(U_0)$  sends $x_{j_2}$ to $x_{j_1}$ for any $x=(x_1,\dots x_J)\in K$. But since $Z(\soc(U_0))=1$, 
$Aut(U_0)$ embeds into $Aut(\soc(U_0))$, showing that $x$ is diagonal. 
\end{proof}

\begin{proof}[\DN{Proof of Proposition \ref{prop:spec}}]
Let $A_r:=\Mon_F(g_r)$ for $r\in\mathbb N$. 
By Corollary \ref{cor:pols} applied over $\oline F$, we see that 
$\Mon_{\oline F}(g_r)$ contains all composition factors of $[\soc(\Gamma_i)]_{i=1}^r$. 
Note also that, since $f_r$ is not linearly related to $x^d$ or $T_d$, $d\in\mathbb N$,  it follows as in the proof of Corollary \ref{cor:pols} that $\Gamma_r$ is a nonabelian almost simple group.

We will next apply Theorem \ref{thm:ram_in} in combination with the field version of Corollary \ref{lem:consecutive} to ensure that at infinitely many places $t\mapsto\alpha$, the  Galois group $A_{r,\alpha}=\Gal(g_r(x)-\alpha, F)\leq A_r$ 
also contains the composition factors of $[\soc(\Gamma_i)]_{i=1}^r$ for all $r\in \mathbb{N}$.  
To this end, let $T=\{t_1,\ldots,t_m\}$ be the finite union of all branch points of $g_i$, $i\in \mathbb N$. 
By Chebotarev's density theorem, we may pick distinct primes $p_1,\ldots,p_m$ of $F$ 
such that $F(t_j)$ has a degree $1$ prime over $p_j$ for every $t_j\in T$, and the primes are outside the finite set of primes consisting of: A) the primes dividing $N!$ (and hence the primes dividing $\#A_r$ for any $r$), B)  the set of primes in case ii) of Theorem \ref{thm:ram_in} (a finite set depending only on $T$), and C) the exceptional set $\mathcal{S}_0$ of primes from Theorem \ref{thm:ram_in} for any $g_r(x)-t$, $r\in \mathbb{N}$. Note that since $A_r$ has a trivial center, the set in C) is explicitly described in Proposition \ref{prop:spec}. 

As   $F(t_j)$ has a degree $1$ prime over $p_j$,  we may pick, using the Chinese Remainder Theorem, (infinitely many) $\alpha\in F$ such that $\alpha$ meets $t_j$ at $p_j$ with multiplicity $1$ for every $j=1,\ldots,m$. 

Fix $r\in \mathbb{N}$ and let $F_i$ and $F_{i,\alpha}$ denote  the fixed field of a point stabilizer in the action of $A_i$ and $A_{i,\alpha}$, respectively, so that $F_i$ and $F_{i,\alpha}$ are the root fields of $g_i(x)-t$ and $g_i(x)-\alpha$, resp., $i=1,\ldots,r$. By possibly replacing the choice of points, assume $F_i\subseteq F_{i+1}$ and $F_{i,\alpha}\subseteq F_{i+1,\alpha}$ for $i=1,\ldots,r$. Let $\Omega_i$ denote the Galois closure of $F_i/F(t)$. 
For $i\in \{1,\dots, r\}$ and $j\in \{1,\dots,m\}$, denote by $(I_{t_j})_i$ the image of the inertia group of $g_r(x)-t$ at $t_j$ under projection $A_r\to A_i$. Since $\alpha$ meets $\infty$ at some $p_j$,  Theorem \ref{thm:ram_in} implies that  $A_{i,\alpha}$ contains a conjugate of $(I_\infty)_i$, and hence is transitive since $(I_\infty)_i\leq A_{i}$ is transitive. In particular, the extensions $F_{i,\alpha}/F_{i-1,\alpha}$, $i=1,\ldots,r$ are independent of the choice of point and block stabilizers up to $F$-isomorphism.

Let $\Omega'_{i,\alpha}$ denote the Galois closure of $F_{i,\alpha}/F_{i-1,\alpha}$, $i=1,\dots, r$, and set $\Gamma_i'=\Gal(\Omega'_{i,\alpha}/F_{i-1,\alpha})$, which may be identified with a subgroup of $\Gamma_i$.
Let $t_j\in T$ and let $t_j'\in g_{i-1}^{-1}(t_j)$ be any preimage. Since $\alpha$ meets $t_j$ at $p_j$ with multiplicity $1$, Theorem \ref{thm:ram_in} implies that $A_{i,\alpha}$ contains an $A_i$-conjugate  of the inertia group $(I_{t_j})_i$.
By transitivity of $A_{i,\alpha}$, it thus also contains, for any $A_i$-conjugate $U$ of $\textrm{Gal}(\Omega_i / F_{i-1})$, an $A_i$-conjugate of $(I_{t_j})_i\cap U$. However these intersections are (again, up to conjugation in $A_i$) exactly the inertia subgroups at preimages of $t_j$ in $\Omega_i/ F_{i-1}$. 
\JK{Note that the groups $U$ are exactly the stabilizers of a block arising from the decomposition $g_i = g_{i-1}\circ f_i$. Now use a suitable element of the cyclic transitive subgroup of $A_{i,\alpha}$ inherited from $(I_\infty)_i$, in order to conjugate back into the fixed block stabilizer $\textrm{Gal}(\Omega_i / F_{i-1})$.}
  Then, project to the Galois groups of (the Galois closures of) $F_{i,\alpha}/F_{i-1,\alpha}$ and of $F_{i}/F_{i-1}$, respectively, to see that $\Gamma_i'$ contains, up to an automorphism of the almost simple group $\Gamma_i$, a \JK{copy} of each inertia group in (the Galois closure of) $F_{i}/F_{i-1}$, i.e., of each inertia group of $f_i$. 
Since $\soc(\Gamma_i)$ is invariably generated \DN{in its automorphism group} by these inertia groups, we get that $\Gamma_i'\supseteq \soc(\Gamma_i)$  is almost simple for all $i=1,\dots, r$.

Since $A_{i,\alpha}$ contains a conjugate of $(I_\infty)_i$, the kernel $K$ of the projection $A_{i,\alpha}\ra A_{i-1,\alpha}$ is nontrivial. Finally, let $\Delta_i \le \Gamma'_{i+1} \wr \Gamma'_{i}$ denote the Galois group of the Galois closure of $F_{i+1,\alpha}/F_{i-1,\alpha}$ for $i=1,\dots, r-1$ and $K_i\triangleleft \Delta_i$ the kernel of the projection $\Delta_i\to \Gamma'_i$. Let $I$ be an inertia group of $f_{i}\circ f_{i+1}$ whose intersection with $\ker(\Mon(f_{i}\circ f_{i+1})\to \Gamma_i)$ is nondiagonal in \DN{$\Gamma_{i+1}^{\deg f_i}$}, as provided by Assumption ii). By the same argument as above, $\Delta_i$ contains a \JK{$\Mon(f_i\circ f_{i+1})$-}conjugate of $I$, and since \JK{conjugation in $\Mon(f_i\circ f_{i+1})$ induces an automorphism of $\Gamma_{i+1}^{\deg(f_i)}$, it leaves} the diagonality property \JK{in this direct product} invariant, as  in the end of \S\ref{sec:ker}. \JK{Thus,} $K_i$ contains a nondiagonal element, \JK{and consequently} the kernel $K_i\triangleleft \Delta_i$ is nondiagonal by Lemma \ref{lem:diagvsdiag}.

Hence, \DN{the assumptions of the field version of Corollary \ref{lem:consecutive}, stated in Appendix \ref{app:fields}},  
 are satisfied for the chain $F\subset F_{1,\alpha}\subset\dots\subset F_{r,\alpha}$, and it thus follows that $A_{r,\alpha}$ contains all the composition factors of $[\soc(\Gamma_i)]_{i=1}^r$, concluding the proof.
\end{proof}

The conditions of the proposition hold for iterates in the following scenario. Recall that $\Br(f)$ and $\Ram(f)$ denote the sets of branch points and ramification points of $f$, resp.
\begin{cor}\label{cor:dynamics}
Let $f\in F[x]$ be an indecomposable polynomial of degree $d$ with monodromy group $\Gamma:=\Mon(f)$.
Assume that $\Br(f)\subseteq \Ram(f)$, that $f$ is not linearly related to $x^d,T_d$ over $\oline F$,  and that $\soc(\Gamma)$ is invariably generated inside $\Aut(\Gamma)$ by the inertia groups of $f$. Then there exist infinitely many $\alpha\in F$ such that the Galois group of  $f^{\circ r}(x)-\alpha$ contains the composition factors of $[\soc(\Gamma)]^r$ for all $r\in\mathbb N$. 
\end{cor}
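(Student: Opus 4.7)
The proof strategy is to apply Proposition~\ref{prop:spec} to the constant sequence $f_i = f$ for all $i \in \mathbb N$, so that $g_r = f^{\circ r}$ and $\Gamma_r = \Gamma$, and the conclusion of the proposition translates directly into the assertion of the corollary. It therefore suffices to verify the three hypotheses (i), (ii), (iii) of the proposition in our setting.

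For hypothesis (i), I would use the standard identity $\Br(u \circ v) = \Br(u) \cup u(\Br(v))$ for composition of polynomial maps (which follows since $u \circ v$ is ramified at $x$ iff $v$ is ramified at $x$ or $u$ at $v(x)$), iterated to $\Br(f^{\circ r}) = \bigcup_{i=0}^{r-1} f^{\circ i}(\Br(f))$. The assumption $\Br(f) \subseteq \Ram(f)$ says that every branch point of $f$ is itself a ramification point, and since $f(\Ram(f)) = \Br(f)$ by definition, we obtain $f(\Br(f)) \subseteq \Br(f)$. By induction, $f^{\circ i}(\Br(f)) \subseteq \Br(f)$ for all $i$, so $\bigcup_{r \in \mathbb N} \Br(f^{\circ r}) \subseteq \Br(f)$ is finite. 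Hypothesis (iii) is precisely the standing assumption that $\soc(\Gamma)$ is invariably generated in $\Aut(\Gamma)$ by the inertia groups of $f$.

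The main obstacle is hypothesis (ii), which asks that $f \circ f$ have an inertia subgroup whose intersection with $K := \ker(\Mon(f \circ f) \to \Gamma)$ is nondiagonal in $\Gamma^d$. Since $f$ is indecomposable of degree $d \geq 5$ not linearly related to $x^d$ or $T_d$, Remark~\ref{rem:burnside} gives that $\Gamma$ is a nonabelian almost simple group. For any $s \in \Br(f)$, the hypothesis $\Br(f) \subseteq \Ram(f)$ yields $f(s) \in f(\Ram(f)) = \Br(f)$, so the branch point $s_0 := f(s)$ of $f$ admits at least one preimage, namely $s$, lying in $\Br(f)$; on the other hand, $s_0$ typically also admits an unramified preimage (a fixed point of the inertia $\tau_{s_0}$ lying outside $\Br(f)$). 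A local analysis of the inertia generator $g$ of $\Mon(f \circ f)$ at $s_0$, viewed inside $\Gamma \wr \Gamma$, shows that the element $g^{\mathrm{ord}(\tau_{s_0})} \in K \leq \Gamma^d$ has trivial components at coordinates corresponding to unramified preimages of $s_0$ and nontrivial components (conjugates of a power of the inner inertia at $s$) at coordinates corresponding to the $\tau_{s_0}$-orbit of $s$; these components cannot all be conjugate in $\Aut(\Gamma)$, yielding nondiagonality. This local computation, together with the handling of cases where $s_0$ has no unramified preimages, is exactly the content of Lemma~\ref{lem:nondiag}.

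With all three hypotheses verified, Proposition~\ref{prop:spec} directly furnishes infinitely many $\alpha \in F$ such that the Galois group of $f^{\circ r}(x) - \alpha$ contains the composition factors of $[\soc(\Gamma)]^r$ for every $r \in \mathbb N$, completing the proof.
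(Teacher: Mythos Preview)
Your proof is correct and follows exactly the paper's approach: the paper's own proof is the single sentence ``The corollary follows immediately from the proposition and the following lemma,'' i.e., verify hypotheses (i)--(iii) of Proposition~\ref{prop:spec} for the constant sequence $f_i=f$, with (ii) supplied by Lemma~\ref{lem:nondiag}. Your explicit check of (i) via $f(\Br(f))\subseteq f(\Ram(f))=\Br(f)$ and the branch-point formula for compositions is the intended (post-critically finite) argument, and your deferral of (ii) to Lemma~\ref{lem:nondiag} is precisely what the paper does; the only minor wrinkle is that you invoke $d\ge 5$ (via Remark~\ref{rem:burnside}) to get $\Gamma$ almost simple, an assumption not literally in the corollary's statement but tacitly needed since Proposition~\ref{prop:spec}(iii) requires $\soc(\Gamma)$ nonabelian.
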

The corollary follows immediately from the proposition and the following lemma. 
\begin{lemma}
\label{lem:nondiag}
    Suppose $f_1,f_2\in F[x]$ are two indecomposable degree-$d$ polynomials of the same ramification type, that are not linearly related to $x^d$ over $\oline F$,  and  that satisfy  $\Br(f_2)\subseteq \Ram(f_1)$.  Then there is at least one inertia subgroup of $\Mon(f_1\circ f_2)$  whose intersection with the kernel of the projection $\Mon(f_1\circ f_2)\ra \Mon(f_1)$ is nondiagonal in $\Mon(f_2)^{\deg(f_1)}$.
\end{lemma}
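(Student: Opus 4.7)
The plan is to select a branch point $\beta\in\Br(f_1)$ such that a generator of the cyclic group $I_\beta\cap K$ has non-conjugate components when viewed in $\Mon(f_2)^d\supseteq K:=\ker(\Mon(f_1\circ f_2)\to\Mon(f_1))$, with $d=\deg f_1$. First note that the hypothesis $\Br(f_2)\subseteq\Ram(f_1)$ forces $\Br(f_1\circ f_2)=\Br(f_1)$ since $f_1(\Br(f_2))\subseteq f_1(\Ram(f_1))=\Br(f_1)$, so we restrict attention to $\beta\in\Br(f_1)$. For such $\beta$ with $f_1$-ramification type $(e_1,\ldots,e_k)$ at preimages $P_1,\ldots,P_k\in f_1^{-1}(\beta)$ and $e_\beta:=\mathrm{lcm}(e_1,\ldots,e_k)$, a standard lifting-of-loops calculation (going around $\beta$ a total of $e_\beta$ times to kill the $\Mon(f_1)$-action and then lifting through $f_2$) shows that $I_\beta\cap K$ is generated by $x_\beta:=\tilde\sigma_\beta^{e_\beta}\in K$, whose component at each of the $e_i$ coordinates above $P_i$ in $\Mon(f_2)^d$ equals $\tau_{P_i}^{e_\beta/e_i}$, where $\tau_{P_i}$ is an inertia generator of $f_2$ at $P_i$ (or $1$ if $P_i\notin\Br(f_2)$).

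To exhibit a $\beta$ making $x_\beta$ nondiagonal, I carry out a Riemann--Hurwitz counting: since $f_1$ is not linearly related to $x^d$, at most one finite branch of $f_1$ can have ramification type consisting entirely of entries $\ge 2$. Indeed, two such branches would contribute at least $2\cdot d/2=d$ to the Riemann--Hurwitz sum from finite branches, exceeding the available total of $d-1$. Hence there is a finite $\beta^*\in\Br(f_1)$ whose type contains some entry equal to $1$, yielding an unramified preimage $P^*\in f_1^{-1}(\beta^*)$; by $\Br(f_2)\subseteq\Ram(f_1)$, the point $P^*\notin\Br(f_2)$, contributing a trivial component in $x_{\beta^*}$. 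Nondiagonality of $x_{\beta^*}$ then reduces to showing that some other preimage $P'\in f_1^{-1}(\beta^*)\cap\Br(f_2)$ exists and yields a nontrivial component $\tau_{P'}^{e_{\beta^*}/e_{P'}}$.

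The last step uses $|\Br(f_2)|=|\Br(f_1)|$ (from the same-ramification-type hypothesis) together with $\Br(f_2)\subseteq\Ram(f_1)$ and a pigeonhole on how $\Br(f_2)$ distributes among the ramified preimages of $f_1$. In the sharpest case when every branch of $f_1$ has exactly one ramified preimage, one has $\Ram(f_1)=\Br(f_2)$ and hence $P'$ exists automatically at $\beta^*$ with $e_{\beta^*}=e_{P'}$, forcing the component to be the nontrivial $\tau_{P'}$. In the complementary case, where some ramified preimages are not in $\Br(f_2)$, a careful selection of $\beta^*$ using the equality of branch-type multisets for $f_1$ and $f_2$ produces a $P'\in\Br(f_2)$ with $|\tau_{P'}|\nmid e_{\beta^*}/e_{P'}$; should this fail for every $\beta^*\in B_1$, one falls back to a branch $\beta$ with $f_1^{-1}(\beta)\subseteq\Br(f_2)$ and $k_\beta\ge 2$, where distinct preimages $P_i$ carry $f_2$-branch types drawn from the common multiset and the varying exponents $e_\beta/e_i$ force the cycle types of the $\tau_{P_i}^{e_\beta/e_i}$ to differ across $i$. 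The main technical obstacle is the numerical verification that the power $\tau_{P'}^{e_{\beta^*}/e_{P'}}$ is genuinely nontrivial (or that, in the fallback case, the distinct components are genuinely non-conjugate), for which the same-ramification-type hypothesis provides the required rigidity.
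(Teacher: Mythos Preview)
Your proposal outlines a plausible direct-construction strategy, and the ``sharpest case'' (where every finite branch point of $f_1$ has exactly one ramified preimage, so that $\Ram(f_1)=\Br(f_2)$) is indeed handled correctly: the unique ramified preimage $P'$ over $\beta^*$ has $e_{P'}=e_{\beta^*}$, whence the corresponding component is $\tau_{P'}$ itself, nontrivial, while the unramified preimage contributes a trivial component.

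The genuine gap is in the complementary case. There you assert, without proof, three things: (i) that a ``careful selection'' of $\beta^*$ yields $P'\in\Br(f_2)$ with $\lvert\tau_{P'}\rvert\nmid e_{\beta^*}/e_{P'}$; (ii) that if (i) fails everywhere, a branch $\beta$ with $f_1^{-1}(\beta)\subseteq\Br(f_2)$ and $k_\beta\ge 2$ exists; and (iii) that in this fallback, the exponents $e_\beta/e_i$ force the $\tau_{P_i}^{e_\beta/e_i}$ to be non-conjugate. None of these is established, and (iii) is false as stated: for instance $\tau_{P_1}$ of order $6$ raised to $e_\beta/e_1=2$ and $\tau_{P_2}$ of order $3$ raised to $e_\beta/e_2=1$ can yield conjugate order-$3$ elements. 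You acknowledge this yourself in the final sentence, flagging exactly the step that is not done.

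The paper's proof proceeds quite differently. It argues by contradiction, assuming \emph{every} inertia subgroup meets $K$ diagonally. Two ingredients that do not appear in your sketch drive the argument: first, the observation that $\gcd(E_{f_1}(P))=1$ at every branch point $P$ (a consequence of indecomposability plus $f_1\not\sim x^d$); second, choosing $Q\in\Br(f_2)$ with \emph{maximal} inertia order $N$ and setting $P=f_1(Q)$. Diagonality of $\sigma^m$ (with $m$ the order of the $f_1$-inertia at $P$) forces every point of $f_1^{-1}(P)$ to lie in $\Br(f_2)$, hence in $\Ram(f_1)$. A tight Riemann--Hurwitz count then pins down the $f_2$-ramification over all of $f_1^{-1}(P)\setminus\{Q\}$ as simple branching, and the maximality of $N$ together with diagonality force every $e_{f_1}(Q'/P)$ to be even, contradicting the $\gcd$ condition. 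This global contradiction, rather than a branch-by-branch construction, is what closes the case you leave open.
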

\begin{proof}
Let $\tilde f:\tilde X\ra\mP^1$ denote the Galois closure of $f:=f_1\circ f_2$. 
Note first that the indecomposability of $f_1$ forces the g.c.d.\ $e=\gcd(E_{f_1}(P))$ of ramification indices in the ramification type  $E_{f_1}(P)$ to be $1$.
Indeed, otherwise $e>1$ and by composing with a linear polynomial in $\oline F[x]$ which sends $P$ to $0$ we obtain a polynomial that factors through $x^e$, contradicting the assumption that $f_1$ is indecomposable and is not linearly related over $\oline F$ to $x^e$.  In particular it follows that $d>2$. 

Denote by $\sigma_2$ an element of maximal order $N$ among generators of inertia groups over finite points in $\Br(f_2)$, and let $Q$ be the corresponding branch point. Let $P=f_1(Q)$. 
Let $\sigma$ (resp.\ $\sigma_1$) be a generator of an inertia group of $f$ (resp.\ $f_1$) over $P$, \JK{and let $m$ be \DN{the order of} $\sigma_1$}. 
Since $Q\in \Br(f_2)\subseteq \Ram(f_1)$, \JK{the ramification index of $Q$ over $P$ is larger than $1$, and in particular the order of $\sigma$}  is necessarily a proper multiple of $N$. Since the ramification of $f_1$ and $f_2$ coincide, we have $m\leq N$, and hence  $\sigma^m$ is a nontrivial element in the block kernel.  

Assume on the contrary all inertia subgroups of $f_1\circ f_2$ are such that their intersection with $K:=\ker(\Mon(f_1\circ f_2)\ra \Mon(f_1))$ is diagonal in $\Mon(f_2)^d$. Then $\sigma^m\in K$ is diagonal, so that 
its restriction to each block is nontrivial. In particular, every preimage in $f_1^{-1}(P)$ is a branch point of $f_2$. 
Since $\Br(f_2)\subseteq \Ram(f_1)$, it follows that all points in $f_1^{-1}(P)$ are ramified under $f_1$. 

To reach a contradiction,  consider the Riemann--Hurwitz contribution  under $f_2$.  Since the ramification of $f_2$ is the same as that of $f_1$, there is a point $P'$ whose ramification under $f_2$ is the same as that of $f_1$ over $P$, and hence its  Riemann--Hurwitz contribution $R_{f_2}(P')$ coincides with  $R_{f_1}(P)=d-\#f_1^{-1}(P)$. 
We first claim that $P'\in f_1^{-1}(P)$. 
Since, by the above, every point in $f_1^{-1}(P)$ is a branch point for $f_2$,  the RH-contribution $\sum_{Q'\in f_1^{-1}(P)} R_{f_2}(Q')$ is at least $\#f_1^{-1}(P)$. \DN{Thus, if $P'\notin f_1^{-1}(P)$, then} the total RH-contribution for $f_2$ is at least 
$$ R_{f_2}(\infty) + R_{f_2}(P') + \sum_{Q'\in f_1^{-1}(P)} R_{f_2}(Q')\geq (d-1)+(d-\#f_1^{-1}(P)) + \#f_1^{-1}(P) = 2d-1,$$
exceeding the total RH-allocation of $2d-2$, yielding a contradiction. 

Henceforth assume $P'\in f_1^{-1}(P)$. The \DN{following similar argument shows that $Q$ can be picked to be $P'$}, and hence $\sigma_1$ and $\sigma_2$ are conjugate in $S_d$. Here,
\begin{align*}\label{equ:RH}
R_{f_2}(\infty)+R_{f_2}(P')+\sum_{Q'\in f_1^{-1}(P)\setminus\{P'\}}R_{f_2}(Q') \geq & (d-1)+(d-\#f_1^{-1}(P)) \\ & + (\#f_1^{-1}(P)-1)  = 2d-2,    
\end{align*}
which is the total RH-allotment for $f_2$. Thus, the last inequality is an equality, $f_2$ is unramified away from $f_1^{-1}(P)\cup\{\infty\}$, and $R_{f_2}(Q')=1$ for every $Q'\in f_1^{-1}(P)\setminus\{P'\}$. Thus  every  $Q' \in f_1^{-1}(P)\setminus\{P'\}$ is simply branched under $f_2$, \DN{that is, has a single ramified preimage and that preimage has ramification index $2$}. 
\JK{By the maximality assumption in the definition of $Q$, this implies that the inertia groups of $P'$ and $Q$ under $f_2$ have the same order, i.e., $m=N$ and we may after all assume} $P'=Q$.

 

\JK{Now consider again $\sigma^m\in K$. This is a nontrivial diagonal element, whence its projection to any block is of the same order $h\ge 2$. But the projections to the blocks are all conjugate to powers of the inertia group generators under $f_2$ of points in $f_1^{-1}(P)$. Since some of these are transpositions, we have $h=2$.}
This forces \JK{both $\sigma_1$ and $\sigma_2$} to be of even order, and moreover the ramification index  $e_{f_1}(Q'/P)$ under $f_1$ to be even for every $Q'\in f_1^{-1}(P)$ 
(since otherwise, taking the $m$-th power would lead to trivial image on each of those blocks). 

We have thus shown that $2\,|\,e_{f_1}(Q'/P)$ for every $Q'\in f_1^{-1}(P)$, contradicting our first observation that  $\gcd(e_{f_1}(Q'/P)) =1$, where $Q'$ runs over $f_1^{-1}(P)$.
\end{proof}
The following construction leads to infinite families of polynomials $f$ fulfilling the assumptions of Corollary \ref{cor:dynamics}.
\begin{example}\label{exam:shabat}
(1) 
Pick a random element $\sigma\in S_d$ and let $R_\sigma$ denote its cycle structure.
We claim that the probability that there exists a genus $0$ product one triple consisting of a $d$-cycle, $\sigma$ and a third element $\tau\in S_d$, which invariably generate a subgroup containing $A_d$, tends to $1$ as $d$ goes to infinity. For each such tuple there exists a polynomial map $f:\mP^1_\C\ra\mP^1_\C$ with ramification type $[d],R_\sigma,R_\tau$ by Riemann's existence theorem. 
By composing $f$ with linear polynomials we may move the two finite branch points arbitrarily and thus assume $\Br(f)\subseteq \Ram(f)$. 
Each such polynomial, a.k.a.\ a ``Shabat polynomial" \cite[\S2.2]{LZ}, is defined over some number field and satisfies the conditions of Corollary~\ref{cor:spec}.  

To prove the claim, 
we construct $\tau\in S_d$ such that  $\sigma\tau$ is an $d$-cycle and $\tau$ is a product of $r-1$ transpositions, where $r=\#R_\sigma$ is the number of nontrivial disjoint cycles in $\sigma$. \DN{As in \S\ref{sec:def}, $f$ then has three branch points $P_0,P_1,P_2$} with $E_f(P_0)=[d],E_f(P_1)=R_\sigma$ and $E_f(P_2)=R_\tau$. The Riemann--Hurwitz formula then implies that the genus \DN{$g$ of the source of $f$ is $0$, for:}
\begin{align*}
    2(g-1) & =-2d+ R_f(P_0) + R_f(P_1) + R_f(P_2) \\ 
    & = -2d + (d-1) + (d-r) + (r-1)=-2. 
\end{align*}
To construct $\tau$, write $\sigma$ as a product $\sigma_1\cdots\sigma_r$ of nontrivial disjoint cycles, 
$ \sigma_i=(x_{i,1},\dots, x_{i,k_i})$, $i=1,\ldots,r$. 
Letting $\tau$ be the product of transpositions $(x_{i,k_i}, x_{i+1,1})$, $i=1,\ldots,r-1$, we see that $\sigma\tau$ is an $r$-cycle, as needed.
Finally, it is known  \cite{LP} that, as $d\to \infty$, the probability for a random element $\sigma\in S_d$ to lie in any transitive subgroup other than $A_d$ or $S_d$ tends to $0$. Hence in particular, the probability for a random element $\sigma$ and an $d$-cycle to invariably generate a subgroup containing $A_d$ tends to $1$ as $d\to\infty$, as required. \\
(2) We give an example of a 
 large family of examples which is additionally defined over $\mathbb{Q}$. It is the family of degree-$d$ polynomial maps of ramification type $[d], [r,1^p], [s^q,t]$, where $d=r+p=qs+t = t+r+q(s-1)-1$. Indeed, the Riemann-Hurwitz formula yields immediately that this is a polynomial ramification type, and it can be turned into a ``dynamical Belyi map" (one with branch points $0$,$1$ and $\infty$, each of which maps to itself), upon suitably prescribing the unique critical points of ramification index $r$, $t$ and $d$ over each branch point by applying a M\"obius transformation. More importantly, \cite[Prop.\ 2.2.19]{LZ} states that the corresponding polynomial map is defined over $\mathbb{Q}$. 
 
For our purposes, it then remains to verify that elements of the three cycle types invariably generate $A_d$ or $S_d$. Note that $s$ and $t$ must necessarily be coprime: \DN{for, otherwise $f$ is decomposable as in the proof of Lemma \ref{lem:nondiag}, and the generated group is imprimitive}. To verify that this condition is also sufficient, 
note that Lemma \ref{lem:primitive} below asserts that the generated group is primitive.  Since it also contains a cycle of length $2\le k\le d-3$, \cite[Corollary 1.3]{GJones} then implies that it contains $A_d$. Thus, the claim follows from:
\end{example}
\begin{lemma}\label{lem:primitive}
    Suppose $x,y,z\in S_d$ have cycle structures $[d], [r, 1^{d-r}], [s^q,t]$, resp., for $q,s,t,r\in\mathbb N$, $r,t>1$ with $q=(d-t)/s$  and $(s,t)=1$. Then  $\langle x,y,z\rangle$ is primitive. 
\end{lemma}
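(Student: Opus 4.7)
The plan is to argue by contradiction: suppose $G := \langle x, y, z\rangle$ admits a nontrivial block system with block size $b$, $1 < b < d$. Since $x$ is a $d$-cycle preserving the blocks, $b$ must divide $d$, and after relabeling so that $x = (1, 2, \dots, d)$, the blocks are forced to be the arithmetic progressions $B_i = \{j : j \equiv i \pmod{d/b}\}$. I plan to derive divisibility constraints on $b$ from the requirement that $y$ and $z$ each preserve this block system, and then show they are jointly inconsistent.

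First I would analyze $z$. Let $B_0$ be a block meeting the unique $t$-cycle of $z$ and let $K$ denote the length of its $z$-orbit on blocks; a standard argument (if $p \in B_0\cap O$ for a $z$-orbit $O$ of length $\ell$ on points, then $z^\ell B_0 \ni z^\ell p = p$, so $z^\ell B_0 = B_0$) shows that $K$ divides the length of every $z$-orbit of points meeting $B_0$. In particular $K \mid t$, and if $B_0$ meets some $s$-cycle then also $K \mid s$; since $\gcd(s,t) = 1$ this latter scenario forces $K = 1$, so $B_0$ is $z$-fixed and contains the entire $t$-cycle together with $k \ge 1$ full $s$-cycles, giving $b = t + ks$ and, via $b \mid d = qs+t$ combined with $\gcd(b,s) = \gcd(t,s) = 1$, also $b \mid q - k$ (``Case B''). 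Otherwise $B_0 \subseteq$ the $t$-cycle, which yields $b \mid t$ and (again via $\gcd(b,s)=1$) $b \mid q$ (``Case A'').

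A parallel analysis for $y$, whose cycle structure consists of a single $r$-cycle and $d-r$ fixed points, shows that the block $B_0'$ meeting the $r$-cycle either lies entirely inside the $r$-cycle (``Case A''', forcing $b \mid r$), or contains a fixed point of $y$ and is therefore $y$-fixed, and then necessarily contains the whole $r$-cycle (``Case B''', forcing $b \ge r$). If neither alternative holds, then $y$ cannot preserve the block system, a contradiction.

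The final step combines these constraints using the Riemann--Hurwitz identity $r = q + 1$ inherited from the polynomial ramification relation $d = t + r + q(s-1) - 1$ in Example \ref{exam:shabat}(2). In Case A + A', $b \mid q$ and $b \mid q+1$ force $b \mid 1$; in Case A + B', $b \mid q$ gives $b \le q$ while $b \ge q+1$; in Case B + A', $b \mid (q+1)-(q-k) = k+1$ yields $b \le k+1$, whereas $b = t + ks \ge k+2$ since $t > 1$; and in Case B + B', $b \mid q - k$ forces $q - k \ge b \ge q+1$, i.e., $k \le -1$. Each combination is impossible, so no such $b$ exists and $G$ is primitive. The main technical obstacle is Case B for $z$ (where a block contains the $t$-cycle plus one or more entire $s$-cycles), which is only excluded by combining the $z$-constraint with the $y$-constraint via the relation $r = q+1$.
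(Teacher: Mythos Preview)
Your proof is correct and follows essentially the same strategy as the paper's: both argue by contradiction with a block size $b$ (the paper calls it $m$), both derive the key relation $r=q+1$ from Riemann--Hurwitz, and both split into cases according to whether the block meeting the $t$-cycle (resp.\ the $r$-cycle) is contained in that cycle or properly contains it. Your Case A/B for $z$ is exactly the paper's dichotomy ``$m\mid t$'' versus ``$t<m$'', and your Case A$'$/B$'$ for $y$ is the paper's ``$m\mid r$'' versus ``$r<m$''. Your handling of Case B+A$'$ via $b\mid k+1$ together with $b=t+ks\ge k+2$ is in fact a bit slicker than the paper's argument through $z^t$.

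One small gap: in Case B+B$'$ you write ``$b\mid q-k$ forces $q-k\ge b$'', but this fails when $q=k$. That subcase is harmless, since $q=k$ gives $b=t+ks=t+qs=d$, contradicting $b<d$; just say so explicitly.
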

\begin{proof}
Clearly $G:=\langle x,y,z\rangle$ is transitive as it contains a $d$-cycle. Suppose $P$ is a $G$-invariant partition into blocks of size $m$ dividing $d$.
By the Riemann--Hurwitz formula $r+t+q(s-1) = d+1$. Since $t+qs=d$, this yields $r=q+1$. 


First note that since $z^s$ is a $t$-cycle which preserves $P$, either $m\divides t$ or ($t< m$ and $z^s$ preserves the blocks of $P$). Similarly either $m\divides r$ or $r< m$.

If $m\divides t$, then $m\divides d-t=qs$. As $(s,m)=1$, this gives $m\divides q$. Thus the above equality $r=q+1$ gives $r=1$ mod $m$. As noted above this forces $r=1$,   contradicting $r>1$. 

Henceforth assume $t<m$. 
If $m\divides r$, the equality $r=q+1$ yields $q=-1$ mod $m$. Since $d=t+sq$, this gives \DN{$s=t$ mod $m$. We claim that $s<m$, and hence $s=t$, contradicting $(s,t)=1$ and $t>1$. To see the claim observe that $z^t$ consists of $q$ $s$-cycles and $t$ fixed points. A block $p\in P$ which contains such a fixed point, is fixed by $z^t$. Since $t<m$, $p$ contains also non-fixed points which belong to an $s$-cycle. As $z^t$ fixes $p$, that whole $s$-cycle is contained in $p$, and hence $s<m$, yielding the claim.} 

Henceforth assume $r<m$.  Since $z^s$ preserves the block system, so is the $t$-cycle in $z$. Thus, there is a positive integer $\DN{v}<m$ such that $v$ of the $s$-cycles in $z$ preserve the block which contains that $t$-cycle, so that $m=vs+t$. Thus 
$$(q-v)s\equiv (q-v)s+vs+t=qs+t=d=0\text{ mod }m.$$
As $(s,m)=1$, we get $q=v$ mod $m$. Since $v<m$ and $q=r-1<m$, this yields $q=v$. Thus $d=qs+t=vs+t=m$, so that the partition is trivial. 
\end{proof}
\DN{Finally} note that the subcase $s=1$ leads to the family of (polynomial) ``normalized Belyi maps", for which \cite{BEK} already shows that the image of the arboreal Galois representation indeed  contains $[A_d]_{i=1}^\infty$ for infinitely many specializations $t\mapsto\alpha$.

\begin{remark}\label{rem:optimist}
1) We note furthermore that the assumption of invariable generation in Corollary \ref{cor:dynamics} is fulfilled for most of the ``exceptional" indecomposable polynomials $f$, i.e., for most $f\not\sim x^n,T_n$ with $\Mon(f)\ne A_d, S_d$. Indeed, using the classification obtained in \cite{Mul}, one verifies directly, e.g. with Magma, that the only cases violating the assumption are those of degree $9$ with $\Mon(f) = P\Gamma L_2(8)$.\\
2) We expect analogues of the proposition and corollary to hold for more general maps, as long as the conditions of Corollary \ref{cor:pols iterative} hold. 
It is even conceivable that, given a reasonably general polynomial $f$, the question whether some specialization $t\mapsto a$ preserves the full monodromy group $\textrm{Mon}(f^{\circ n})$ for all $n$ simultaneously is decided on a certain finite level $n_0$, in which case Hilbert's irreducibility theorem would imply that the set of exceptional specialization values is a thin set after all. Cf.\ \cite{Hindes}, which obtains (conditionally on Vojta's conjecture) such a result for iterates of certain quadratic maps.
\end{remark}

\section{Splitting}\label{sec:splitting}  Theorem \ref{thm:main-intro} follows from
Corollary \ref{cor:pols iterative} and the following splitting assertion:
\begin{prop}\label{prop:splitting}
    Suppose $G\leq [S_{d_i}]_{i=1}^r$ contains the composition factors of $[A_{d_i}]_{i=1}^r$, for integers $d_i\geq 5$, $i=1,\ldots,r$. Then $G$ contains a subgroup isomorphic to $[A_{d_i}]_{i=1}^r$. 
\end{prop}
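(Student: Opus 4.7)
The plan is to induct on $r$. The base case $r=1$ is immediate: any $G\leq S_{d_1}$ having $A_{d_1}$ as a composition factor must have order at least $|A_{d_1}|$, so $G\in\{A_{d_1},S_{d_1}\}$ and in either case contains $A_{d_1}$.

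For the inductive step, set $m:=d_1\cdots d_{r-1}$ and use the natural projection $\pi:W_r:=[S_{d_i}]_{i=1}^r\to W_{r-1}$ arising from the wreath product decomposition $W_r=S_{d_r}^m\rtimes W_{r-1}$. Write $K:=G\cap S_{d_r}^m$ and $G_1:=\pi(G)\leq W_{r-1}$. The argument then falls into three stages: (a) show that $K\supseteq A_{d_r}^m$; (b) show that $G_1$ still satisfies the hypothesis of the proposition for the sequence $d_1,\dots,d_{r-1}$, so that by the inductive hypothesis $G_1\supseteq [A_{d_i}]_{i=1}^{r-1}$; (c) produce a lift of $[A_{d_i}]_{i=1}^{r-1}$ from $G_1$ back to $G$, which together with the $A_{d_r}^m$ from (a) builds the desired copy of $[A_{d_i}]_{i=1}^r = A_{d_r}^m\rtimes [A_{d_i}]_{i=1}^{r-1}$ inside $G$.

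For (a) and (b) I plan a careful layerwise accounting of the composition factors of $G$ along the subnormal series inherited from the chief series of $W_r$, whose nonabelian chief factors are exactly the powers $A_{d_j}^{m_j}$ ($j=1,\dots,r$). The basic bound I will rely on is that any subgroup of $A_d^n$ with $d\geq 5$ has at most $n$ composition factors isomorphic to $A_d$, with equality forcing the subgroup to equal $A_d^n$ (an order argument, since nonabelian simple factors each contribute $|A_d|$ to the order). Since for any $H\leq S_d^n$ the quotient $H/(H\cap A_d^n)\hookrightarrow C_2^n$ carries no $A_d$-factor, the bound transfers to $H$. Combining these layerwise bounds with the lemma that simple sections of an iterated wreath product $[S_{e_j}]_{j=1}^s$ are simple sections of one of the individual $S_{e_j}$, and with the global hypothesis on multiplicities of each $A_{d_i}$ in $G$, should pin down both that the $A_{d_r}^m$-layer of $G$ is full (yielding (a)) and that the higher layers contribute exactly the composition factors needed to apply induction (yielding (b)). For (c), since $A_{d_r}^m$ is normal in $W_r$ it is normal in $G$; then $\bar G:=G/A_{d_r}^m$ embeds into $C_2\wr W_{r-1}$ and surjects onto $G_1$, and the preimage of $[A_{d_i}]_{i=1}^{r-1}$ in $\bar G$ is an extension of a perfect group by an elementary abelian $2$-group. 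Its commutator subgroup is then perfect, maps onto $[A_{d_i}]_{i=1}^{r-1}$, and has trivial intersection with the abelian kernel, providing the required splitting.

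I expect the main obstacle to lie in stage (a): composition factors $A_{d_r}$ of $G$ can in principle appear in layers $A_{d_j}^{m_j}$ with $d_j>d_r$, since $A_{d_r}$ is then a section of $A_{d_j}$, and one must rule out that such ``leakage'' eats into the $m$ copies of $A_{d_r}$ that the hypothesis forces to live in the bottom layer. The layerwise order bounds combined with the overall constraint $|G|\leq |W_r|$ should suffice, but the inequalities are tight essentially only in the layer of maximal $d_j$, so the counting has to be run with some care to cover all orderings of the $d_i$.
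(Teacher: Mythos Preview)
Your stages (a) and (b) can indeed be made rigorous: process the distinct values among the $d_j$ from largest to smallest; once the layer $G^{(j-1)}/G^{(j)}$ is shown to contain $A_{d_j}^{m_j}$, its quotient by $A_{d_j}^{m_j}$ lies in $C_2^{m_j}$ and so contributes no further nonabelian simple composition factors, and the counting cascades down cleanly. This part of your plan is sound, and differs from the paper's route only in that you peel off the innermost layer $S_{d_r}^m$ first, whereas the paper splits off the outermost $A_{d_1}$ first and builds the base by conjugation.

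The genuine gap is in stage (c). Your splitting argument---that for an extension $1\to N\to H\to Q\to 1$ with $Q$ perfect and $N$ elementary abelian, the commutator subgroup $[H,H]$ meets $N$ trivially---is false. The Schur double cover $\hat A_d$ (for $d\ge 5$) is a perfect central extension of $A_d$ by $C_2$ with $[\hat A_d,\hat A_d]=\hat A_d$, so $[H,H]\cap N=N\neq 1$ and no complement is produced. In your setup with $r=2$, the preimage $H$ of $A_{d_1}$ in $\bar G$ sits inside $C_2\wr_{[d_1]} S_{d_1}$, and the content of the required splitting is precisely that $\hat A_{d_1}$ does \emph{not} embed there. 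This is exactly what the paper's Lemma~\ref{lem:induced} establishes (via a case analysis of the possible kernels and the fact, specific to alternating groups, that the point stabilizer $A_{d-1}$ is nonsplit in $\hat A_d$), and it is then bootstrapped in Corollaries~\ref{cor:induced} and~\ref{cor:split-fix}. The paper's Example~\ref{exam:non-split} shows that the proposition itself fails if $A_d$ is replaced by a simple group whose point stabilizer \emph{does} split in its double cover. So the step you flag as routine is where the nontrivial cohomological content lives, and your argument as written does not supply it. (Even after repairing this, you still need to lift your complement from $\bar G=G/A_{d_r}^m$ back into $G$; this is a second, separate splitting problem---an extension of a perfect group by $A_{d_r}^m$---that your sketch does not address.)
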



The proof uses the following \DN{elementary} lemma. Denote  $[d]=\{1,\ldots,d\}$. 
\begin{lemma}\label{lem:induced} 
For $d\geq 5$, suppose the natural projection $\pi:G\ra A_d$ from $G\leq C_2\wr_{[d]} A_d$ is onto. 
Then $\pi$ splits. 
\end{lemma}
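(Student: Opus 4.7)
The plan is to reformulate splitting cohomologically and reduce the problem to showing the vanishing of $H^1(A_d, C_2^d/K)$ for every $A_d$-submodule $K\le C_2^d$. Setting $K := G \cap C_2^d$, which is automatically $A_d$-invariant, the short exact sequence $1 \to K \to G \to A_d \to 1$ has an extension class $[G] \in H^2(A_d, K)$, and splitting of $\pi$ is equivalent to $[G]=0$. Pushing out along $K \hookrightarrow C_2^d$ yields the manifestly split extension $C_2\wr A_d = C_2^d \rtimes A_d$, so $[G]$ maps to zero in $H^2(A_d, C_2^d)$; by the long exact sequence of $0 \to K \to C_2^d \to C_2^d/K \to 0$, this forces $[G]$ to lie in the image of the connecting map from $H^1(A_d, C_2^d/K)$, so vanishing of this $H^1$ suffices.

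First I would observe that $H^1(A_d, C_2^d)=0$ via Shapiro's lemma, since $C_2^d$ is induced from the trivial module at the point stabilizer $A_{d-1}$: thus $H^1(A_d, C_2^d) \cong \operatorname{Hom}(A_{d-1}, \mathbb F_2) = 0$ for $d\ge 5$ (using $A_4^{\mathrm{ab}}=C_3$ and $A_m^{\mathrm{ab}}=1$ for $m\ge 5$). The $A_d$-submodules of $C_2^d$ are precisely $0$, $D := \langle(1,\ldots,1)\rangle$, $V_0 := \{v : \sum_i v_i = 0\}$, and $C_2^d$; the cases $K\in\{0, C_2^d\}$ are trivial. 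For $K=V_0$, $C_2^d/V_0$ is the trivial module, and $H^1(A_d, \mathbb F_2) = \operatorname{Hom}(A_d, \mathbb F_2) = 0$ by perfectness of $A_d$. For $K=D$ with $d$ odd, the decomposition $C_2^d = D \oplus V_0$ makes $C_2^d/D \cong V_0$ an $A_d$-direct summand, so $H^1(A_d, V_0)$ is a summand of the vanishing $H^1(A_d, C_2^d)$.

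The main obstacle is the remaining case $K=D$ with $d$ even, where $C_2^d/D$ is a non-split extension of $\mathbb F_2$ by the irreducible heart $V_0/D$. Here I would invoke the long exact sequence for $0 \to D \to C_2^d \to C_2^d/D \to 0$ to obtain an injection $H^1(A_d, C_2^d/D) \hookrightarrow H^2(A_d, D) \cong H^2(A_d, \mathbb F_2) \cong C_2$, the latter generated by the class of the Schur double cover $2.A_d$. The hard step is verifying that this generator has nontrivial image in $H^2(A_d, C_2^d) \cong H^2(A_{d-1}, \mathbb F_2)$ under the restriction induced by $D\hookrightarrow C_2^d$ together with Shapiro; I would dispatch this by appealing to the classical fact that the spin double cover $2.A_d$ restricts along the point stabilizer $A_{d-1} \hookrightarrow A_d$ to the nontrivial double cover $2.A_{d-1}$ (rather than to $\mathbb F_2\times A_{d-1}$), yielding injectivity of the restriction and hence $H^1(A_d, C_2^d/D)=0$. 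Once all cases give vanishing, $[G]=0$; the desired section $A_d \hookrightarrow G$ is realized concretely by conjugating $G$ inside $C_2\wr A_d$ by an element $w\in C_2^d$ whose coboundary trivializes the cocycle of $G$, identifying $G$ with $K\rtimes A_d$ and taking the canonical $A_d$-copy.
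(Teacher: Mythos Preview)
Your proof is correct and shares the paper's overall architecture: enumerate the $A_d$-submodules $K\le C_2^d$ (namely $0$, $D$, $V_0=I$, $C_2^d$), use the long exact sequence together with Shapiro's lemma, and for the case $K=D$ invoke the nonsplitting of $A_{d-1}$ inside the double cover $2.A_d$. The case $K=V_0$ is literally the same argument as the paper's (both reduce to $H^1(A_d,C_2)=0$).

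The one genuine presentational difference is in the case $K=D$. The paper argues directly: a nonsplit $G$ would be $\hat A_d$, and an embedding $\hat A_d\hookrightarrow C_2\wr A_d$ would give a faithful transitive action on $2d$ points whose point stabilizer is a copy of $A_{d-1}$ disjoint from the center, contradicting that double transpositions are nonsplit in $\hat A_d$. You instead stay inside cohomology, identifying $H^1(A_d,C_2^d/D)$ with the kernel of $H^2(A_d,\mathbb F_2)\to H^2(A_d,C_2^d)\cong H^2(A_{d-1},\mathbb F_2)$, which is restriction via the Shapiro isomorphism, and is injective for exactly the same reason (the restriction of $2.A_d$ to $A_{d-1}$ is $2.A_{d-1}$, not the trivial extension). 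Your version is more uniform (all four cases go through the same $H^1$-vanishing template), while the paper's is slightly more hands-on; the underlying content is identical.
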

For this we use Shapiro's lemma \cite[\S I.2.5]{Ser}. For abelian $U$, it gives an isomorphism $\HG^i(V,\Ind_T U)\cong \HG^i(V_1, U)$ of cohomology groups for $i\geq 1$, where $V_1\leq V$ is a point stabilizer in the action on $T$.  
\begin{proof}[Proof of Lemma \ref{lem:induced}]
The kernel $K:=\ker\pi\leq \Ind_JC_2$ is a $G$-invariant subgroup. Since $A_d$ is transitive on $J$, each of the projections $\pi_j:K\ra C_2$, $j\in J$ has the same image, cf.\ \S\ref{sec:ker}. 
The possibilities are $K=1,D,I,\Ind_JC_2$, where $D$ is the diagonal subgroup consisting of vectors with equal entries, and $I$ is the augmentation subgroup consisting of vectors whose sum is $0$, see \cite{Mor}. In case $K=1$ or $\Ind_JC_2$, our group $G$ is $A_d$ or the full group, resp., in which case $\pi$ clearly splits.

In case $K=I$, the inclusion $K\leq \Ind_JC_2$ has cokernel $C_2$ and hence induces the following part of the long exact sequence 
\begin{equation}\label{equ:long} 1=\HG^1(A_d,C_2)\ra \HG^2(A_d,I)\ra \HG^2(A_d,\Ind_JC_2).
\end{equation}
Since  $G\cdot \Ind_JC_2$ is the full group $C_2\wr_JA_d$, the image of the extension $\eps_G\in \HG^2(A_d,I)$, induced by $G$, in $\HG^2(A_d,\Ind_JC_2)$ is trivial. Since this map is injective by \eqref{equ:long}, we see that $\eps_G=1$, as required. 

In case $K=D$ is diagonal, $G$ is a central extension of $C_2=\{0,1\}$ by $A_d$. The only such \JK{nonsplit} extension is the Schur cover $\hat A_d$. 
Assume on the contrary $G\cong\hat A_d$ so that  there is an embedding $j:\hat A_d\ra C_2\wr A_d$, \JK{and in particular a faithful action of $\hat A_d$ on $2d$ points. The point stabilizer of such an action would have to be an index $d$ subgroup of $A_d$ (i.e., a group isomorphic to $A_{d-1}$), split in $\hat A_d$, which is impossible, since even the double transpositions of $A_d$ are nonsplit in $\hat A_d$.}

\end{proof}
\DN{In the case $K=I$ the triviality of the extension can be seen also from the fact that the {\it heart} $I/I\cap D$ of $\Ind_{A_1}^A C_2$ is trivial by \cite[Corollary 1]{KP}. In the case $K=D$, the nonexistence of a lift $\hat A_d\ra C_2\wr A_d$ of the projection $\hat A_d\ra A_d$ can also be deduced from the nonexistence of $\hat A_d$-representations of degree $<d(d-1)$, see \cite[Thm.\ 1.1]{GL}.}
\begin{remark}\label{rem:split}
The proof of Lemma \ref{lem:induced} applies more generally when $A_d$ is replaced by a  doubly transitive simple group $A$ and $C_2$ is replaced by a cyclic group $C_p$ of prime order $p$ as long as the point stabilizer $A_1$ is \JK{nonsplit} in any nonsplit central extension of $A$ by $C_p$, and the {\it heart} of $\Ind_{A_1}^A C_p$  is an irreducible $A$-module. 
For the list of doubly transitive groups with trivial heart see \cite{Mor}.
\end{remark}
Moreover, when $A_d$ is replaced by \JK{an arbitrary} simple group $A$, the assertions of 
 \DN{Proposition \ref{prop:splitting} and Lemma \ref{lem:induced}} may fail:
\begin{example}\label{exam:non-split}
Let $\hat A$ be a nonsplit central extension of a simple group  $A$ by $C_2$ such that the point stabilizer $A_1$ splits in $\hat A$ (as opposed to $A_{d-1} $ being nonsplit in $\hat A_d$). In this case the action of $A$ on $J$ can be extended to $\hat A$ and $\hat A$ acts on $\hat A/A_1$ imprimitively inducing an embedding of $\hat A$ into $C_2\wr A$. Thus the resulting embedding of $A^J\rtimes \hat A$ into $\hat A\wr A$ contains the composition factors of $\hat A\wr A$ but no split copy of $A$. 
\end{example}
The rest of the proof in this section works when $A_d$ is replaced by a simple group $A$ as in Remark \ref{rem:split} when in addition one assumes $A$ is almost complete, that is, the projection $\Aut(A)\ra\Out(A)$ splits. 
We first extend the lemma as follows: 
\begin{cor}\label{cor:induced}
    For $d\geq 5$ and $k\geq 1$, suppose the natural projection $\pi:G\ra A_d$ from $G\leq C_2\wr_{[k]\times [d]} A_d$ is onto, where the actions of  $A_d$ on $[d]$ and $[k]$ are the standard and trivial ones, resp. Then $\pi$ splits. 
\end{cor}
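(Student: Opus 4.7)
The plan is to proceed by induction on $k\ge 1$. The base case $k=1$ is exactly Lemma \ref{lem:induced}. For the inductive step, identify $C_2\wr_{[k]\times[d]}A_d$ with $M^k\rtimes A_d$, where $M:=\Ind_{[d]}C_2$ is the standard $\mathbb F_2$-permutation module for $A_d$ and $A_d$ acts diagonally on the $k$ copies of $M$.

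I would introduce two homomorphisms out of $M^k\rtimes A_d$: the projection $p:M^k\rtimes A_d\to M^{k-1}\rtimes A_d$ that forgets the last $M$-coordinate, and the projection $\phi:M^k\rtimes A_d\to M\rtimes A_d=C_2\wr_{[d]}A_d$ that retains only the last coordinate. Both are group homomorphisms since $A_d$ acts diagonally on $M^k$. Since $G$ surjects onto $A_d$, so does $p(G)\le C_2\wr_{[k-1]\times[d]}A_d$, and the inductive hypothesis yields a section $s:A_d\to p(G)$, which I write as $s(a)=(\vec\sigma(a);a)$ for a $1$-cocycle $\vec\sigma:A_d\to M^{k-1}$.

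Next, I would define $H:=G\cap p^{-1}(s(A_d))\le G$. Its elements all take the form $(\vec\sigma(a),m;a)$ for some $a\in A_d$ and $m\in M$, and for every $a\in A_d$ at least one such $m$ exists making the element lie in $G$ (because $s(a)\in p(G)$). Thus $H$ surjects onto $A_d$, and consequently so does its image $\phi(H)\le M\rtimes A_d=C_2\wr_{[d]}A_d$. Applying Lemma \ref{lem:induced} to $\phi(H)$ then yields a second section $s':A_d\to \phi(H)$, which I write as $s'(a)=(\mu(a);a)$.

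To conclude, I would define $\tilde s:A_d\to G$ by $\tilde s(a):=(\vec\sigma(a),\mu(a);a)$; by the construction of $\mu$ as an image under $\phi$ of an element of $H$, this element lies in $H\subseteq G$. A direct semidirect-product computation, using the diagonal action of $A_d$ on $M^k$, gives
$$\tilde s(a)\tilde s(b)=(\vec\sigma(a)+a\cdot\vec\sigma(b),\,\mu(a)+a\cdot\mu(b);\,ab)=(\vec\sigma(ab),\mu(ab);ab)=\tilde s(ab),$$
so $\tilde s$ is the required section. The only real subtlety is the coordinate bookkeeping needed to combine the two independently produced sections; this works precisely because the action of $A_d$ on $M^k$ is diagonal, so the cocycle $\vec\sigma$ on the first $k-1$ coordinates and the cocycle $\mu$ on the last coordinate assemble into a single cocycle $(\vec\sigma,\mu)$ on all $k$ coordinates.
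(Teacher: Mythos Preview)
Your proof is correct and follows essentially the same inductive strategy as the paper: reduce from $k$ to $k-1$ copies of $M=\Ind_{[d]}C_2$, with the base case handled by Lemma~\ref{lem:induced}. The only real difference is packaging: the paper phrases the reduction cohomologically, using the long exact sequence associated to the short exact sequence of $A_d$-modules $K_1'\hookrightarrow K\twoheadrightarrow K_1$ to show that the extension class $\alpha\in \HG^2(A_d,K)$ vanishes, whereas you carry out the same reduction concretely by building the section $\tilde s=(\vec\sigma,\mu;\cdot)$ from the two cocycles produced by the inductive hypothesis and by Lemma~\ref{lem:induced}. Your version is slightly more elementary (no long exact sequence needed) and makes transparent exactly why the diagonal $A_d$-action on $M^k$ is what allows the two partial sections to be glued; the paper's version, on the other hand, makes clearer that what is really being shown is $\HG^2(A_d,K)=0$ for the relevant submodules $K\le M^k$.
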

\begin{proof}
    Our group $G$ induces a group extension of $A_d$ and $K:=\ker\pi\leq C_2^{[k]\times[d]}$, yielding a class $\alpha\in \HG^2(A_d,K)$.  It suffices to show that $\alpha$ is trivial. 
    

We argue by induction on $k$. The induction base $k=1$ follows from Lemma \ref{lem:induced}. Consider the kernel  $K_1'$ and image $K_1$ of the projection $G\cap C_2^{[k]\times [d]}\ra C_2^{\{1\}\times [d]}$. The projection induces a group extension of $A_d$ by $K_1$ which embeds into $C_2\wr_{[d]} A_d$. Thus Lemma \ref{lem:induced} shows that the image of $\alpha$ in $\HG^2(A_d,K_1)$ is trivial.  The long exact sequence corresponding to $K_1'\ra K\ra K_1$ shows that $\alpha$ is the restriction $i_1^*(\alpha_1)$ of an element $\alpha_1\in \HG^2(A_d,K_1')$ under the map $i_1^*:\HG^2(A_d,K_1')\ra\HG^2(A_d,K)$ induced from the inclusion $i_1:K_1'\hookrightarrow K$. 
The element $\alpha_1$ defines an extension $G_1$ of $A_d$ by $K_1'$ which maps via $i_1^*$ to an extension $\hat G$ of $A_d$ by $K$ that is equivalent (and hence isomorphic) to $G\leq C_2\wr_{[k]\times [d]}A_d$. Composing the inclusion $G_1\ra \hat G$ with the equivalence $\hat G\cong G$ and the projection $C_2\wr_{[k]\times [d]}A_d\ra C_2\wr_{\{2,\ldots,k\}\times [d]}A_d$, 
yields an inclusion of $G_1$ into $C_2\wr_{\{2,\ldots,k\}\times [d]}A_d$. Thus by induction, $G_1\ra A_d$ splits, and hence $\alpha_1$ and $\alpha$ are trivial. 
\end{proof}
\DN{By applying Corollary \ref{cor:induced} iteratively, we obtain  the following corollary}. For a projection $\pi:G
\ra B$ and $A\leq B$,  say that a homomorphism $s:A\ra G$ is a {\it section} of $\pi$ over $A$ if $\pi\circ s$ is the identity map on $A$. \DN{For the  projection $[S_{d_i}]_{i=1}^r\ra S_{d_1}$, we identify $S_{d_1}$ with its image under the trivial section $S_{d_1}\ra [S_{d_i}]_{i=1}^r$. 
A section $s:H\ra [S_{d_i}]_{i=1}^r$ over $H\leq S_{d_1}$ is then {\it given by conjugation} if there exists  $g\in [S_{d_i}]_{i=2}^r$,  such that $s(h) = ghg^{-1}$ for all $h\in H$. The corresponding class in  $\HG^1(H,[S_{d_i}]_{i=2}^r)$ is then $0$.} 
\begin{cor}\label{cor:split-fix}
Let $d_i\geq 5$, $i=1,\ldots,r$ be integers.   Let $G\leq [S_{d_i}]_{i=1}^r$. 
Suppose $G$ contains the composition factors of $[A_{d_i}]_{i=1}^r$. 
Then the projection $G\ra S_{d_1}$ has a section $s:A_{d_1}\ra G$ over $A_{d_1}$  that is given by conjugation. 
\end{cor}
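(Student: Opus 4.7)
We proceed by induction on $r$. For the base case $r = 1$, any subgroup $G \leq S_{d_1}$ containing $A_{d_1}$ as a composition factor must contain $A_{d_1}$ (since $A_{d_1}$ is simple of index $2$ in $S_{d_1}$ for $d_1 \geq 5$), so the inclusion itself is the desired section with conjugator $g = 1$.

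For the inductive step with $r \geq 2$, write $W := [S_{d_i}]_{i=1}^r$ and let $\rho\colon W \to W' := [S_{d_i}]_{i=1}^{r-1}$ be the projection killing the deepest level, with kernel $L := S_{d_r}^{d_1 \cdots d_{r-1}}$. Set $G' := \rho(G)$. A multiset argument on composition factors shows that $G'$ still contains those of $[A_{d_i}]_{i=1}^{r-1}$: the composition factors of $G \cap L$ lie among $\{A_{d_r}, C_2\}$ (with multiplicities bounded by $d_1 \cdots d_{r-1}$), while the composition factors of $[A_{d_i}]_{i=1}^r$ exceed those of $[A_{d_i}]_{i=1}^{r-1}$ by exactly $A_{d_r}^{d_1 \cdots d_{r-1}}$, so after subtraction enough factors remain in each class $A_{d_j}$ with $j \leq r-1$. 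By induction, $G'$ admits a section $s'\colon A_{d_1} \to G'$ given by conjugation by some $g' \in \ker(W' \to S_{d_1})$. Using the natural splitting $W = L \rtimes W'$ to lift $g'$ into $W$ and setting $\tilde A := g' A_{d_1} g'^{-1} \leq W$, the problem becomes to find $l \in L$ with $l \tilde A l^{-1} \leq G$; equivalently, a complement to $G \cap L$ in $X := \rho^{-1}(s'(A_{d_1})) \cap G$ that is an $L$-conjugate of $\tilde A$.

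I construct this complement in two cohomological stages. Let $A := A_{d_r}^{d_1 \cdots d_{r-1}} \triangleleft W$ and $A_L := G \cap A$. First, the quotient $X/A_L$ embeds into the wreath product $C_2 \wr_{[d_2 \cdots d_{r-1}] \times [d_1]} A_{d_1}$ (since $L/A \cong C_2^{d_1 \cdots d_{r-1}}$, with $A_{d_1}$ permuting the $d_1$ blocks of size $d_2 \cdots d_{r-1}$) and projects onto $A_{d_1}$, so Corollary \ref{cor:induced} supplies a splitting $\bar s \colon A_{d_1} \to X/A_L$. Second, the preimage $T \leq X$ of $\bar s(A_{d_1})$ is an extension of $A_{d_1}$ by $A_L$ with $T \cap L = A_L$. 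Since $A_L$ is an $A_{d_1}$-invariant subgroup of the direct power $A_{d_r}^{d_1 \cdots d_{r-1}}$ of a nonabelian simple group, a generalized Goursat analysis identifies $A_L$ with a direct power of $A_{d_r}$, so $Z(A_L) = 1$. Consequently, extensions of $A_{d_1}$ by $A_L$ realizing the prescribed outer action are classified by $H^2(A_{d_1}, Z(A_L)) = 0$ and are all equivalent to the split extension; tracking this equivalence through the embedding $T \leq L \rtimes \tilde A$ exhibits the resulting complement as an $L$-conjugate of $\tilde A$, which together with the first stage yields the desired $g := l g' \in K$ with $g A_{d_1} g^{-1} \leq G$.

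The main obstacle is the second stage: the abstract splitting of $T$ follows readily from the cohomology vanishing, but verifying that the concrete complement is an $L$-conjugate of $\tilde A$ inside the ambient $L \rtimes \tilde A$ (and not merely an abstractly isomorphic copy of $A_{d_1}$) requires a careful subdirect-product analysis of $A_L$ under the $A_{d_1}$-action, together with bookkeeping of how the extension equivalence intertwines with the ambient wreath structure.
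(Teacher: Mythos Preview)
Your inductive setup and Stage~1 match the paper's approach closely: both reduce to the image $G_{r-1}$ under projection to $[S_{d_i}]_{i=1}^{r-1}$, invoke the inductive section $s'$, and apply Corollary~\ref{cor:induced} to an extension of $A_{d_1}$ by a subgroup of $C_2^{[m]\times[d_1]}$. The divergence is in how you pass from the abstract section $\bar s$ to a section \emph{given by conjugation}, and this is where your argument has a genuine gap.

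Your Stage~2 does not close the obstacle you yourself flag. The vanishing of $H^2(A_{d_1}, Z(A_L))$ shows only that $T$ splits abstractly; it says nothing about \emph{which} complement arises, and in particular does not force any complement to be an $L$-conjugate of $\tilde A$. Indeed, complements to $L$ in $L \rtimes \tilde A$ are classified up to $L$-conjugacy by the nonabelian set $H^1(\tilde A, L)$, which via Shapiro reduces to conjugacy classes of homomorphisms $A_{d_1-1}\to S_{d_r}^m$ --- typically nontrivial (e.g.\ whenever $d_r \ge d_1-1$). So ``tracking the equivalence'' coming from $H^2=0$ through the embedding $T \le L \rtimes \tilde A$ cannot by itself produce the $L$-conjugacy you need; the outer actions of $T$ and of $A\rtimes\tilde A$ on $A$ need not even agree.

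What is actually needed --- and what the paper does --- is an $H^1$ vanishing, not $H^2$. First, the composition-factor hypothesis forces $A_L = A := A_{d_r}^{d_1\cdots d_{r-1}}$ in full: a subdirect-product count shows $G\cap L$ must contain all $d_1\cdots d_{r-1}$ copies of $A_{d_r}$, so your Goursat analysis is unnecessary. Then $T/A$ and $\tilde A A/A$ are both complements to $L/A \cong C_2^{m d_1}$ inside $L\tilde A/A$, and Shapiro gives
\[
H^1\bigl(A_{d_1}, \operatorname{Ind}_{[d_1]} C_2^{m}\bigr) \cong H^1(A_{d_1-1}, C_2^{m}) = \operatorname{Hom}(A_{d_1-1}, C_2^{m}) = 0.
\]
Hence $T$ is $L$-conjugate to $A \rtimes \tilde A$, and the corresponding conjugate of $\tilde A$ already lies in $T \le G$; no Stage~2 is required. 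The paper packages this more cleanly by using the canonical section $\hat s$ of $W \to W/A$ (coming from $S_{d_r} = A_{d_r}\rtimes C_2$) to write $G = A \rtimes \overline G$, so that the entire problem lives in $\overline G \le C_2 \wr_{[m]\times[d_1]} G_{r-1}$ from the outset, and the $H^1=0$ computation above then shows directly that the section furnished by Corollary~\ref{cor:induced} is given by conjugation.
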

\begin{proof}
Set $d:=d_1$. We argue by induction on $r$, with the base case $r=1$ holding trivially. Let $G_{r-1}$ be the  image of the projection $\pi:G\ra S_{d_{r-1}}\wr\cdots\wr S_{d_1}$. 
By induction the projection $[S_{d_i}]_{i=1}^{r-1}\ra S_d$ has a section over $A_d$ with image $A_d'\leq G_{r-1}$ that is given by conjugation. 

Set $m=d_2\cdots d_{r-1}$ so that $G$ acts on a set of $m\cdot d$ blocks, denoted $[m]\times [d]$. Let $\oline G:=G/A_{d_r}^{[m]\times [d]}$ be a subgroup of $[S_{d_i}]_{i=1}^r/A_{d_r}^{[m]\times [d]}=C_2^{[m]\times [d]}\rtimes [S_{d_i}]_{i=1}^{r-1}$. 
Since the quotient map $[S_{d_i}]_{i=1}^r\ra [S_{d_i}]_{i=1}^r/A_{d_r}^{[m]\times [d]}=C_2^{[m]\times [d]}\rtimes [S_{d_i}]_{i=1}^{r-1}$ has a trivial\footnote{This section is obtained from the given section of $[S_{d_i}]_{i=1}^r \ra [S_{d_i}]_{i=1}^{r-1}$ and a  section of $S_{d_r}\ra C_2$.} section $\hat s$, 
we identify $G$ with $A_{d_r}^{[m]\times [d]}\rtimes \oline G$ where the semidirect product action is through $\hat s$. 

Let $\oline \pi:\oline G\ra G_{r-1}$ be the projection, so that $\oline G\leq C_2\wr_{[m]\times [d]}G_{r-1}$ acts on  $[2]\times [m]\times [d]$ via the action of $G_{r-1}$ on $[m]\times [d]$ and that of $C_2$ on $[2]=\{1,2\}$. 
It suffices to show that $\oline \pi$ admits a section $\oline s$ over $A_d'$, given by conjugation in $C_2^{[m]\times [d]}$:
For,  $s=\hat s\circ \oline s$ is then a section of $G\ra G_{r-1}$ over $A_d'$ given by conjugation in $([S_{d_i}]_{i=2}^r)^d$. 


To construct $\oline s$, consider the kernel $\oline K= C_2^{[m]\times [d]}\cap \oline G$ of $\oline \pi$. 
As an $A_d'$-module  $C_2^{[m]\times [d]}= \Ind_{[m]
\times [d]} C_2$. 
Corollary \ref{cor:induced} then implies that 
$\oline\pi$ has a section $\oline s$ over $A_d'$. 
Identifying $\Ind_{[m]\times [d]}C_2=\Ind_{[d]}C_2^{[m]}$,  Frobenius reciprocity yields 
$\HG^1(A_d',\Ind_{[d]}C_2^{[m]})\cong \HG^1(A_{d-1}',C_2^{[m]}).$ Since $C_2^{[m]}$ is a trivial $A_{d-1}'$-module and $A_{d-1}$ has no nontrivial $2$-elementary abelian quotient, these cohomology groups are trivial. Thus,
 $\oline s$ is given by conjugation of $A_d'\leq  G_{r-1}$ in $\oline G$ by an element of $C_2^{[m']\times[d]}$. 
\end{proof}
\begin{proof}[Proof of Proposition  \ref{prop:splitting}]
Set $m:=d_2\cdots d_r$ and denote the set $[S_{d_i}]_{i=1}^r$ acts on by $[m]\times [d_1]$, where $[d_1]=\{1,\ldots,d_1\}$ is the set $S_{d_1}$ acts on. 
Assume inductively that a subgroup of $[S_{d_i}]_{i=2}^r$ containing the composition factors of $[A_{d_i}]_{i=2}^r$ contains a subgroup isomorphic to $[A_{d_i}]_{i=2}^r$. 
Let $K_1$ denote the kernel of the projection $G\ra S_{d_1}$, and consider the intersection of $K_1$ with the  copy of $S_{d_r}\wr \cdots \wr S_{d_2}$ that fixes $[m]\times \{2,\ldots,d_1\}$ pointwise. 
Since $K_1$ contains the composition factors of $(A_{d_r}\wr \cdots \wr A_{d_2})^{d_1}$, this intersection contains the composition factors of $[A_{d_i}]_{i=2}^r$, and hence by induction a copy  $\tilde A_1$ of $[A_{d_i}]_{i=2}^r$. 

\DN{On the other hand by Corollary \ref{cor:split-fix}, the projection $G\ra S_{d_1}$ has a section over $A_{d_1}$ with image $A_{d_1}'\leq G$, given by conjugation by an element in $([S_{d_i}]_{i=2}^r)^{d_1}$. Since the conjugating element is in the kernel, the  subgroup  $A_{d_1-1}'$ fixing $1$ acts on the block $[m]\times \{1\}$ trivially. 
Since in addition $\tilde A_1$ acts nontrivially only on the block $[m]\times \{1\}$ by construction,   $A_{d_1-1}'$ commutes with $\tilde A_1$. It follows that the action of $A_{d_1}'$ on conjugates of $\tilde A_1$ has a stabilizer $A_{d_1-1}'$. Thus, there are $d_1$ such conjugates 
${}^{\sigma_j}\tilde A_1$, for $\sigma_j\in A_{d_1}'$ satisfying   $\sigma_j(1)=j$, $j\in [d_1]$.} These are pairwise disjoint and centralizing. The conjugation action on them is equivalent to the action on $[d_1]$. Thus, their product $\prod_{j\in [d_1]}{}^{\sigma_j}\tilde A_1$ is normalized by $A_{d_1}'$ and is isomorphic to $\Ind_{[d_1]}\tilde A_1$.  Since this product is in the kernel of the projection to $S_{d_1}$ while $A_{d_1}'$ maps isomorphically to $A_{d_1}$, their intersection is trivial. In total, the product and $A_{d_1}'$ generate a subgroup of $G$ isomorphic to  $\Ind_{[d_1]}\tilde A_1\rtimes A_{d_1}\cong \tilde A_1\wr_{[d_1]} A_{d_1}'\cong [A_{d_i}]_{i=1}^r$. 
\end{proof}

\appendix

\section{A field version}\label{app:fields}
For the reader's convenience, we record here the ``field version" of Theorem \ref{thm:main} \DN{obtained by applying the theorem to the generic points corresponding to the fields, or alternatively,  repeating the argument when maps are replaced by field extensions}. 


The analogous terminology is as follows. 
Say that an intermediate subfield $F_1$ of an extension $F_3/F$ is Galois-proper if the Galois closure of $F_1/F$ is a proper subfield of the Galois closure of $F_3/F$. 
Say that an intermediate  subfield $F\subseteq F_1\subseteq F_3$ of an extension $F_3/F$ is {\it invariant} if the compositium $F_1\cdot F_2$ is a proper subfield of $F_3$ for every intermediate proper subfield $F_2$ of $F_3/F$. 

Let $U,V$ be the Galois groups of the Galois closures of $F_3/F_1$ and $F_1/F$, resp., so that $V$ acts on a set $J$ of cardinality $|J|=[F_1:F]$, and the Galois group $G$ of the Galois closure of $F_3/F$ embeds into $U\wr_J V$. Assume further that $\soc(U)=L^I$ is a nonabelian minimal normal subgroup of $U$, so that $G$ acts on $I\times J$ via the conjugation action on $\soc(U)^J=L^{I\times J}$ as in Proposition \ref{prop:carmel}. Let $P$ be the partition of $I\times J$ so that $\soc(U)^J\cap K \cong L^P$ where $K$ is the kernel of the projection $G\ra V$. Say that $F_1$ is {\it conjugation compatible} if $P$ and $J$ are compatible. 

\noindent {\bf Theorem \ref{thm:main}.B.} Consider a finite extension $L/F$ with an invariant  conjugation-compatible intermediate subfield $F\subsetneq F_1\subsetneq L$ such that $\soc(U)$ is a nonabelian minimal normal subgroup of the Galois group $U$ of the Galois closure of $L/F_1$. 
Assume that for every intermediate subfield $F_1'$ of $F_1/F$, $F_1$ is a Galois-proper invariant intermediate subfield of $L/F_1'$ and the Galois group of the Galois closure of $L/F_1'$ does not embed into $\Aut(\soc(U))$. Then 
the Galois group of the Galois closure of $L/F$ contains $\soc(U)^{[F_1:F]}$. 

Similarly, one may also phrase Corollary \ref{lem:consecutive} merely in terms of field extensions. Say that $E/F$ is a {\it minimal extension} if it has no proper intermediate subfields. 

\noindent {\bf Corollary \ref{lem:consecutive}.B.} $F_0=F\subseteq \cdots \subseteq F_r=E$ be a sequence of minimal extensions of degrees $d_i=[F_i:F_{i-1}]$ such that the Galois group $\Gamma_i$ of the Galois closure of $F_i/F_{i-1}$ is a nonabelian almost-simple group for $i=1,\ldots,r$. Assume that the projection from the Galois group of the Galois closure $F_{i+1}/F_{i-1}$ to $\Gamma_i$ has a nondiagonal kernel $K_i\leq \Gamma_{i+1}^{d_i}$, and that the Galois closures of $F_i/F_0$ are all distinct, for $i=1,\ldots,r-1$. Then the Galois group of the Galois closure of $E/F$ contains the composition factors of $[\soc(\Gamma_i)]_{i=1}^r$.

\end{document}